\newtheorem{theorem}{Theorem}[section]
\newtheorem{lem}[theorem]{Lemma}
\newtheorem{cor}[theorem]{Corollary}
\theoremstyle{definition}
\newtheorem{de}[theorem]{Definition}
\theoremstyle{remark}
\newtheorem{re}[theorem]{Remark}
\numberwithin{equation}{section}
\renewcommand{\MR}[1]{}
\newcommand*{\rom}[1]{\expandafter\@slowromancap\romannumeral #1@}
\theoremstyle:=theorem,de,re,pro,lem,coro,plain\do{%
     \expandafter\g@addto@macro\csname th@\theoremstyle\endcsname{%
        \addtolength\thm@preskip\parskip
     }%
   }
\begin{document}

\title[]{Sharp well-posedness and ill-posedness results for the inhomogeneous NLS equation}

\keywords{inhomogeneous Schrödinger equation, local well-posedness, fractional Leibniz rule, ill-posedness.}

\subjclass[2020]{35A01, 35Q55, 42B15}

%%% AUTHORS %%%
\author[L. Campos]{Luccas Campos}
\address{Department of Mathematics, UFMG,
Av. Pres. Antônio Carlos, 6627, Pampulha, 31270-901, Belo Horizonte, Minas Gerais, Brazil}
\email{luccas@mat.ufmg.br}

\author[S. Correia]{Sim\~ao Correia}
\address{
        Center for Mathematical Analysis, Geometry and Dynamical Systems,
        Department of Mathematics,
        Instituto Superior T\'ecnico, Universidade de Lisboa,
        Av. Rovisco Pais, 1049-001 Lisboa, Portugal.}
\email{simao.f.correia@tecnico.ulisboa.pt}

\author[L. G. Farah]{Luiz Gustavo Farah}
\address{Department of Mathematics, UFMG,
Av. Pres. Antônio Carlos, 6627, Pampulha, 31270-901, Belo Horizonte, Minas Gerais, Brazil}
\email{farah@mat.ufmg.br}

%% SUBJECT CLASS %%%
%\makeatletter{\renewcommand*{\@makefnmark}{}
%\footnotetext{\textit{2020 Mathematics Subject classification:} 35Q55, 35P25, 35B40.}\makeatother}
%\makeatletter{\renewcommand*{\@makefnmark}{}
%\footnotetext{\textit{Keywords:} Nonlinear Schrödinger-type equations, scattering, Morawetz estimates.}\makeatother}

\begin{abstract}
We consider the initial value problem associated to the inhomogeneous nonlinear Schrö\-din\-ger equation,
		\begin{equation}
			iu_t + \Delta u +\mu|x|^{-b}|u|^{\alpha}u=0, \quad u_0\in H^s(\mathbb R^N) \text{ or } u_0 \in\dot H ^s(\mathbb R^N),
		\end{equation}
with $\mu=\pm 1$, $b > 0$, $s\geq 0$ and $0 < \alpha \leq \frac{4-2b}{N-2s}$. By means of an adapted version of the fractional Leibniz rule, we prove new local well-posedness results in Sobolev spaces for a large range of parameters. We also prove an ill-posedness result for this equation, through a delicate analysis of the associated Duhamel operator.
\end{abstract}

\maketitle

\section{Introduction}

\subsection{Setting and motivation} In this work, we consider the inhomogeneous nonlinear Schrö\-din\-ger equation

\begin{equation}\label{INLS}\tag{INLS}
iu_t + \Delta u +\mu  |x|^{-b}|u|^{\alpha}u=0,
\end{equation}
where $u: \mathbb{R}^N\times\mathbb{R} \to \mathbb{C}$, $\mu=\pm 1$, $0 < b < \min\{2,N\}$ and 
\begin{equation}\label{cond_p}
0 < \alpha \leq \alpha_s, \,\,\mbox{with}\,\, \alpha_s=
\begin{cases}
\frac{4-2b}{N-2s},& \mbox{if} \,\, s< N/2,\\
\infty,& \mbox{if} \,\, s\geq N/2.
\end{cases}
\end{equation}

This model has been a topic of intense research in the last few years (\cite{g_8}, \cite{Boa}, \cite{CHL20}, \cite{AT21}, \cite{AK21}, \cite{Boa_Dinh}, \cite{Campos_New_2019}, \cite{MMZ21}, \cite{KLS21}, \cite{LS21}). Physically, inhomogeneous NLS equations can be used to study the nonlinear propagation of laser beams subject to spatially dependent interactions (see e.g. \cite{belmonte2007lie} and the references therein). In particular, equation \eqref{INLS} can be derived as a limiting case of polynomially decaying interaction potentials (see \cite{Ge08} for more details). 

Our aim in this work is to study the well-posedness of the initial value problem (IVP) associated to equation \eqref{INLS}. First, we prove well-posedness in the usual Strichartz framework for initial data either in $H^s$ or $\dot{H}^s$ and in both subcritical and critical cases. 
We are particularly interested in addressing the fractional regularity. The natural approach, relying on the Strichartz estimates in classical Sobolev spaces, was initiated by Guzm\'an \cite{Boa}, establishing the local well-posedness in $H^s$, for $0\leq s\leq \min\{1,N/2\}$, $0<b< \min\{2,N/3\}$ and $0 < \alpha < \alpha_s$. Later, An and Kim \cite{AK21} studied the cases $0\leq s< \min\{N,1+N/2\}$, $0<b< \min\{2,N-s, N/2+1-s\}$ and $0 < \alpha < \alpha_s$ using similar ideas. In these two works, the starting point of the analysis is to split the space domain in different regions (around/far from the origin) since $|x|^{-b}$ and its fractional derivatives fail to be in any $L^p$ space, $1\leq p \leq \infty$. However, the non-local nature of the fractional derivative requires a careful treatment of the nonlinear estimates and these authors overlooked this step by inappropriately using the fractional Leibniz rule locally in space (see for instance inequalities (3.29) and (3.44) in \cite{AK21}).

Here, we develop a modification of the fractional Leibniz rule to overcome this obstacle and successfully give a complete proof of local well-posedness in $H^s$ based on Strichartz estimates in the classical Sobolev spaces. We expect this new estimate to be applicable to other problems that involve fractional derivatives locally in space.

Another question related with well-posedness is the continuous dependence on the initial data. This problem is especially difficult for small $\alpha$ and $\alpha<s<\alpha+1$. Morally, after taking $s$ derivatives on the nonlinearity, we should arrive at a term which is linear in $D^su$ and only Hölder-continuous in $u$ (of order $\alpha+1-s$). The rigorous proof of this fact has only been recently achieved in its full scope by Fujiwara \cite{F22_chain}. Combined with the localized version of Leibniz rule, this yields several Strichartz estimates for the difference of two nonlinearities which ultimately yield the continuous dependence result. 

Concerning the local well-posedness in other functional settings, under various restrictions on $b, s, N$ and $\alpha$, we refer to Aloui and Tayachi \cite{AT21} and An and Kim \cite{AK21-2}, for an approach based on Lorentz spaces, and Kim, Lee and Seo \cite{KLS21}, based on weighted $L^p$ spaces.

Finally, we prove the analytic ill-posedness of the flow for large values of $b+s$, by performing a refined analysis of the Duhamel operator. More specifically, we provide a precise asymptotic descrition of the nonlinearity when $u=e^{it\Delta}u_0$ is a free evolution. For $b+s$ outside of the local well-posedness range, the leading order term is not bounded in $H^s$, thus implying the analytic ill-posedness of the flow.

In conclusion, we are able to provide (correct) proofs of well-posedness in a \emph{sharp} range of parameters, thus settling the local well-posedness theory for the \eqref{INLS} in $H^s$ and $\dot{H}^s$ spaces.

%mentioned above, since one does not need to decompose the spatial domain when working with Lorentz and weighted $L^p$ spaces. In this direction, Aloui and Tayachi \cite{AT21} made use of Lorentz spaces to study the cases $0\leq s< \min\{1,N/2\}$, $0<b< \min\{2,N-2s\}$ and $0 < \alpha \leq \alpha_s$. The same strategy was used by An and Kim \cite{AK21-2} to study the critical case $\alpha = \alpha_s$ with the restrictions $0\leq s< N/2$ and $0<b< \min\{2,N-s, N/2+1-s\}$. Another method, based on weighted $L^p$ spaces, was introduced by Kim, Lee and Seo \cite{KLS21} to study the cases $N\geq 3$, $0<s<1/3$ with some technical restrictions on the range of $b$ and $\alpha$. 

\subsection{Statement of the results}

Recall that the critical Sobolev index is given by
\begin{equation}\label{s_c}
s_c=\frac{N}{2}-\frac{2-b}{\alpha},
\end{equation}
in the sense that the $\dot{H}^{s_c}$ norm remains invariant under the scaling $u \mapsto \lambda^{\frac{2-b}{\alpha}} u(\lambda x, \lambda^2 t)$, $\lambda>0$.

The first part of the paper is devoted to the well-posed results. Our approach is based in the fixed point method. More precisely, we want to show that the Duhamel operator
\begin{equation}\label{eq:duhamelop}
	\Phi(u)(x,t)=e^{it\Delta}u_0(x,t) + i\mu \int_0^t e^{i(t-\tau  )\Delta}|x|^{-b}|u(x,\tau)  |^{\alpha}u(x,\tau)  d\tau ,\quad t\in[0,T],
\end{equation}
where $e^{it\Delta}$ denotes the Schrödinger group, has a fixed point in a suitable complete metric space. This will follow either from a contraction mapping argument or through the application of some stability results.

We start by considering the IVP associated to \eqref{INLS} with initial data in inhomogeneous Sobolev spaces. We refer to Section \ref{Preli} for the precise definition of the auxiliary spaces mentioned in the statements below.
\begin{theorem}[Well-posedness in ${H}^s$] \label{InhTheo}
Let $N \geq 1$, $s\geq 0$, $\alpha>0$ and $0 < b <\min\{2,N-s,\frac{N}{2}+1-s \}$. Moreover, if $\alpha$ is not an even integer, assume additionally that $s<\alpha+1$. 
\begin{itemize}
\item[(a)] (Subcritical well-posedness in ${H}^s$) If 
$$
0 < \alpha <
\begin{cases}
\frac{4-2b}{N-2s},& s<N/2,\\
\infty, &s\geq N/2,
\end{cases}
$$ 
then, for any $u_0 \in {H}^s$, there exists $T = T(\|u_0\|_{{H}^s}) >0$ and a unique solution $u \in C([0,T],{H}^s) \cap  S^s(L^2,[0,T])$ to \eqref{INLS} with initial datum $u_0$.

\vskip 5pt
\item[(b)] (Critical well-posedness in ${H}^s$) If $\alpha= \frac{4-2b}{N-2s}$, then 
%there exists $\delta_0>0$ such that 
for any $u_0 \in {H}^s$, there exists $T = T(u_0) >0$ such that there is as unique solution $u \in C([0,T],{H}^s) \cap S^s(L^2,[0,T])$ to \eqref{INLS} with initial datum $u_0$. Moreover, if $\|u_0\|_{{H}^s}$ is sufficiently small, then the solution is global.

% if
%\begin{equation}
%    \|(1-\Delta)^{s/2}e^{it\Delta}u_0\|_{S(L^2,[0,T])} < \delta_0,
%\end{equation}
%for some $T>0$, 
%then there exists a unique solution $u \in C([0,T]:{H}^s) \cap (1-\Delta)^{-s/2} S(L^2,[0,T])$ to \eqref{INLS} with initial datum $u_0$. 
\end{itemize}
In both cases, if $s\le 1$, given $\delta>0$, there exists $\epsilon>0$ such that, for any $v_0\in H^s$ with $\|u_0-v_0\|_{H^s}<\epsilon$, the solution $v$ with initial data $v_0$ is defined on $[0,T]$ and satisfies
$$
\sup_{t\in[0,T]} \|u(t) - v(t)\|_{H^s} < \delta.
$$
\end{theorem}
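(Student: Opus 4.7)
The plan is to construct a fixed point of the Duhamel operator $\Phi$ in a closed ball of the Strichartz-type space $S^s(L^2,[0,T])$ by the contraction mapping principle, and to obtain continuous dependence through an analogous difference estimate. Applying the Strichartz inequalities for $e^{it\Delta}$ to $\Phi(u)$ reduces the existence part to proving a nonlinear bound of the schematic form $\bigl\||x|^{-b}|u|^{\alpha}u\bigr\|_{Y'}\le C\,T^{\theta}\,\|u\|_{S^s(L^2,[0,T])}^{\alpha+1}$ in a suitable dual Strichartz space $Y'$, with $\theta>0$ in the subcritical regime and $\theta=0$ in the critical one, together with a matching bound for differences. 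The subcritical case then closes by shrinking $T$, and $T$ depends only on $\|u_0\|_{H^s}$. In the critical case the factor $T^{\theta}$ is absent, so instead one chooses $T$ small enough that $\|e^{it\Delta}u_0\|_{S^s(L^2,[0,T])}$ is small; this is possible by monotone convergence but requires knowledge of $u_0$ and not only its norm, which is why $T=T(u_0)$. When $\|u_0\|_{H^s}$ is sufficiently small, this linear quantity is globally small, yielding a global solution.

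For the nonlinear estimate itself, the obstruction is that $|x|^{-b}\notin L^p_{\mathrm{loc}}$ for $p\ge N/b$, so the Kato--Ponce fractional Leibniz rule cannot be applied directly to $|x|^{-b}|u|^{\alpha}u$. I would split $\mathbb{R}^N$ into $B:=B(0,1)$ and $B^c$. On $B^c$ the weight and its fractional derivatives up to order $s$ are pointwise bounded, so the classical fractional Leibniz rule together with the Christ--Weinstein chain rule (whose validity for non-even $\alpha$ requires the standing hypothesis $s<\alpha+1$) produces the desired bound. On $B$, however, the non-locality of $D^s$ means that $D^s\bigl(\mathbf{1}_B |x|^{-b}|u|^{\alpha}u\bigr)$ picks up values of $u$ on $B^c$, which is exactly where the arguments of \cite{Boa,AK21} develop a gap. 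Here I invoke the adapted fractional Leibniz rule developed earlier in the paper, which cleanly separates the singular weight from the smooth factor and makes the non-local tail controllable. The range $0<b<\min\{2,N-s,\tfrac{N}{2}+1-s\}$ is precisely what is required for all the resulting integrability exponents to land on a Strichartz-admissible pair.

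For the contraction and uniqueness, one needs a difference estimate for $\Phi(u)-\Phi(v)$. If $\alpha$ is an even integer, the nonlinearity is polynomial and the previous estimates apply verbatim. Otherwise, under $s<\alpha+1$, I would use Fujiwara's fractional chain rule \cite{F22_chain} to bound $D^s\bigl(|u|^{\alpha}u-|v|^{\alpha}v\bigr)$ by $D^s(u-v)$ times Hölder-continuous factors of $u$ and $v$; the contraction then closes in a weaker ($S^0$-type) metric on the ball, which also gives uniqueness. Continuous dependence for $s\le 1$ then follows from the same difference estimate applied globally on $[0,T]$ combined with a Grönwall-type argument, the restriction $s\le 1$ sparing us from applying higher-order chain rules to the Hölder part of the nonlinearity.

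The main obstacle is the nonlinear estimate on the inner ball $B$: because $D^s$ is non-local, localising the Strichartz norm to $B$ does not localise the Leibniz rule, and naively applying the rule only inside $B$ leaves a term coming from $B^c$ unaccounted for. The adapted fractional Leibniz rule is designed precisely to close this gap without wasting regularity of $u$ or the integrability of $|x|^{-b}$; once that single inequality is in hand, the remainder of the proof follows a standard Cazenave--Weissler scheme.
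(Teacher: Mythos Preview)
Your existence argument is essentially the paper's: contraction in a ball of $S^s(L^2,[0,T])$ equipped with the weaker $S(L^2)$-metric, with the nonlinear estimate coming from the adapted Leibniz rule (the paper packages this as Lemma~\ref{basic_interp}, where the splitting radius is optimized rather than fixed at $1$, but the mechanism is the one you describe). The subcritical/critical distinction is handled exactly as you say.

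The gap is in continuous dependence for $\alpha<1$. Fujiwara's chain rule (Lemma~\ref{lem:diff_estim_Hs}) does \emph{not} bound $D^s(|u|^\alpha u-|v|^\alpha v)$ solely by $D^s(u-v)$ times factors of $u,v$; when $\alpha<1$ it also produces a term of the form $(\|D^su\|+\|D^sv\|)\,\|u-v\|^\alpha$, which is only H\"older of order $\alpha$ in $u-v$ and cannot be absorbed by any Gr\"onwall-type or direct-absorption argument at the $H^s$ level. The paper handles this by a two-step bootstrap: first close a Lipschitz difference estimate at the $L^2$ level using \eqref{dual_l} (this is elementary and needs no Fujiwara), then feed that $L^2$ control into the H\"older term of the $H^s$-level estimate \eqref{dual_grad_s_2}. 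The outcome is only H\"older continuous dependence, $\|u-v\|_{\tilde S^s}\lesssim \|u_0-v_0\|_{H^s}+\|u_0-v_0\|_{H^s}^{\eta\alpha}$, not Lipschitz. Your sketch conflates the $L^2$-level contraction (which needs no Fujiwara) with the $H^s$-level continuous dependence (which does), and misses this bootstrap; without it the argument for $\alpha<1$ does not close.
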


%It should be mentioned that the restriction $b+s<\min\{N, \frac{N}{2}+1\}$ also appears in \cite{AK21}. 

In the subcritical case, the above result extends the one obtained by Aloui and Tayachi \cite{AT21}, by relaxing the assumption $b+2s<N$ to $b+s<N$. In the critical case, it extends the result by An and Kim \cite{AK21}, where they require the more restrictive assumption\footnote{Here $\lceil s \rceil=\min\{m\in \mathbb{N}: m\geq s\}$ and $\lfloor s \rfloor=\max\{m\in \mathbb{N}: m\leq s\}$ denote the ceiling and floor functions, respectively.} $\lceil s \rceil<\alpha+1$, if $\alpha$ is not an even integer. We also provide a detailed proof of this result using a generalized fractional Leibniz rule suitable for our context (see Lemma \ref{gen_leib_1} and Corollary \ref{gen_leib_2}). 

\begin{re}
Aloui and Tayachi \cite{AT21} proved continuous dependence under the restriction $s=0, s>0$ and $\alpha>1,$ or $\alpha=s=1$. Here, we are able to obtain the result in the full local existence range.
\end{re}

%On the other hand, {\color{blue}the novelty here is that we can express the restriction on $s$ just in terms of the parameter $\alpha$ independent of the space dimension $N$}.

We continue our well-posedness study considering now the initial data in homogeneous Sobolev spaces with $0\leq s_c\leq s\leq 1$. 

\begin{theorem}[Well-posedness in $\dot{H}^s$]\label{HomTheo} Let $N \geq 1$, $0\leq s\leq 1$ such that $s<N/2$, %$0\leq s_c\leq s<N/2$,  
and $0 < b <\min\{\frac{N}{2}+1-s,N-s,2\}$. 
\begin{itemize}
\item[(a)](Subcritical well-posedness in $\dot{H}^s$) If  $0 < \alpha <\frac{4-2b}{N-2s}$ and $\alpha \geq 1$ for $N=1$, then for any $u_0 \in \dot{H}^s$, there exists $T = T(\|u_0\|_{\dot{H}^s}) >0$ such that there is a unique solution $u \in C([0,T],\dot{H}^s) \cap  \dot{S}^s(L^2,[0,T])$ to \eqref{INLS} with initial datum $u_0$.
\vskip 5pt
\item[(b)] (Critical well-posedness in $\dot{H}^s$) If $\alpha =\frac{4-2b}{N-2s}$, then, for any $u_0 \in \dot{H}^s$, there exists $T = T(u_0) >0$ such that there is a unique solution $u \in C([0,T],\dot{H}^s) \cap \dot{S}^s(L^2,[0,T])$ to \eqref{INLS} with initial datum $u_0$. Moreover, if $\|u_0\|_{\dot{H}^s}$ is sufficiently small, then the solution is global.
%\begin{itemize}
%\item[(b.1)]($s<1$) If $\alpha =\frac{4-2b}{N-2s}$, then, for any $u_0 \in \dot{H}^s$, there exists $T = T(u_0) >0$ such that there is a unique solution $u \in C([0,T]:\dot{H}^s) \cap D^{-s} S(L^2,[0,T])$ to \eqref{INLS} with initial datum $u_0$.\\
%\item[(b.2)] ($s=1$) Assume that $N \geq 3$ and let $\alpha =\frac{4-2b}{N-2}$. Then, for any $u_0 \in \dot{H}^1$, there exists $T = T(u_0) >0$ such that there is a unique solution $u \in C([0,T]:\dot{H}^1) \cap D^{-1} S(L^2,[0,T])$ to \eqref{INLS} with initial datum $u_0$.
%\end{itemize}
\end{itemize}
In both cases, given $\delta>0$, there exists $\epsilon>0$ such that, for any $v_0\in H^s$ with $\|u_0-v_0\|_{\dot{H}^s}<\epsilon$, the solution $v$ with initial data $v_0$ is defined on $[0,T]$ and satisfies
$$
\sup_{t\in[0,T]} \|u(t) - v(t)\|_{\dot{H}^s} < \delta.
$$
\end{theorem}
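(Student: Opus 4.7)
The plan is to run a standard fixed-point argument for the Duhamel operator $\Phi$ in \eqref{eq:duhamelop}, on a closed ball of a Banach space built from homogeneous Strichartz-type norms adapted to $\dot H^s$. Specifically, I would choose an admissible pair $(q,r)$ (and an auxiliary pair to handle the nonlinearity) so that Sobolev embedding gives $\dot{W}^{s,r} \hookrightarrow L^{\tilde r}$ for some $\tilde r$ compatible with the scaling $s_c = N/2 - (2-b)/\alpha$, and define the metric space
\begin{equation}
X_{T,M} = \{ u \in \dot{S}^s(L^2,[0,T]) : \|u\|_{\dot{S}^s(L^2,[0,T])} \leq M,\ \|u\|_{\text{Str}} \leq M \},
\end{equation}
equipped with a weaker (non-derivative) distance to avoid the low-regularity pathologies of the chain rule. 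The linear Strichartz estimate reduces the contraction to a nonlinear estimate of the form $\big\| |D|^s(|x|^{-b}|u|^\alpha u)\big\|_{\dot N(s)} \lesssim \|u\|_{\dot{S}^s}^{\alpha+1}$, together with an analogous Lipschitz estimate for $|x|^{-b}|u|^\alpha u - |x|^{-b}|v|^\alpha v$.

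The core nonlinear estimate is obtained by splitting the physical space into $A = \{|x|\leq 1\}$ and $B = \{|x|>1\}$ and applying the modified fractional Leibniz rule (Lemma \ref{gen_leib_1} and Corollary \ref{gen_leib_2}), which is tailored precisely to handle $|x|^{-b}$, a function that lies in no $L^p$ globally but is locally well-behaved in a fractional sense. On each region one distributes the derivative between $|x|^{-b}$ and $|u|^\alpha u$, uses the fractional chain rule (invoking Fujiwara's refinement \cite{F22_chain} when $\alpha<s<\alpha+1$) for the latter factor, and absorbs $|x|^{-b}$ via Hardy's inequality (valid exactly because the hypotheses $b<N/2+1-s$ and $b<N-s$ guarantee the correct indices). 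The scaling-critical choice of Lebesgue exponents, dictated by $s_c\leq s\leq 1$, ensures the resulting power of $T$ is nonnegative in the subcritical case (yielding $T=T(\|u_0\|_{\dot H^s})$) and zero in the critical case (forcing a smallness condition on $\|e^{it\Delta}u_0\|_{\text{Str}}$ instead, which holds for small time or small data by density and the Strichartz dominated-convergence argument).

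Once the nonlinear estimate is established, the rest is standard: contraction on $X_{T,M}$ gives a unique solution in the selected space, persistence of regularity promotes it to $C([0,T];\dot H^s)$, and a blowup alternative extends the solution globally when the data is small in the critical case. For continuous dependence, I would not differentiate the difference of two nonlinearities — that is the main pitfall in the homogeneous low-regularity regime. Instead, combining the contraction bounds in the weaker metric with the uniform $\dot{S}^s$ bound on both $u$ and $v$, and invoking the Lipschitz version of the nonlinear estimate (again via the localized Leibniz rule and the chain rule result of \cite{F22_chain}), one obtains $\sup_{[0,T]}\|u(t)-v(t)\|_{\dot H^s}\to 0$ as $\|u_0-v_0\|_{\dot H^s}\to 0$.

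The main obstacle is the nonlinear estimate at the critical scaling: the homogeneous setting leaves no room to trade regularity for integrability, so every application of the Leibniz rule must be exactly scale-balanced, and each piece of the space decomposition must be controlled by the \emph{same} Strichartz norm of $u$. The constraint $b<\min\{N/2+1-s,N-s,2\}$ is precisely what makes all three contributions — the term with the derivative on $|x|^{-b}$, the term with the derivative on $|u|^\alpha u$, and the Hardy-type bound — simultaneously finite with matching exponents.
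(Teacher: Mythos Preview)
Your fixed-point outline for part (a) is in the same spirit as the paper's, but two details differ. First, the paper does not invoke Hardy's inequality to absorb $|x|^{-b}$; instead it bounds $|x|^{-b}$ and $D^s|x|^{-b}$ directly in $L^p(B_R)$ and $L^p(B_R^c)$ and then \emph{optimizes over $R$} (Lemma~\ref{basic_interp}), which is what collapses the two regional contributions into a single Strichartz norm on the right and delivers the sharp range $b+s<\min\{N,N/2+1\}$. A Hardy-based absorption of the weight would force extra derivatives onto $|u|^\alpha u$ and typically recovers only the weaker $b+2s<N$ range (cf.\ \cite{AT21}). Second, your ``weaker (non-derivative) distance'' is underspecified in the homogeneous setting: since $u_0\notin L^2$ in general, a pure $L^2$-level metric is unavailable. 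The paper's contraction metric is $\rho(u,v)=\|D^{s_0}(u-v)\|_{S(\dot H^{s-s_0},[0,T])}$ with $0<s_0\le\min\{\alpha,s\}$ coming from Lemma~\ref{lem_guz2}; it sits at scaling level $s$ but carries only $s_0$ derivatives, which is exactly what makes the chain rule tractable for $\alpha<1$.

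For part (b) the paper takes a genuinely different route. Rather than running the fixed point directly for $u_0\in\dot H^s$ with smallness of $\|e^{it\Delta}u_0\|_{S(\dot H^s)}$, it approximates $u_0$ by $u_{0,n}\in H^s$, proves a critical stability lemma (Lemma~\ref{Ls<1}), shows the solutions $u_n$ are Cauchy in $S(\dot H^s,[0,T_0])$, and passes to the limit; uniqueness and continuous dependence are then read off from the same stability lemma together with the difference estimates \eqref{dual_s_5}--\eqref{dual_s_6}. Your direct Cazenave--Weissler style argument would also yield existence and uniqueness, but the paper's stability-based construction packages continuous dependence in $\dot H^s$---including the $\alpha<1$ case, where the difference bound is only H\"older---into the same machinery. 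Note, finally, that one \emph{does} have to estimate $D^s$ of the difference of nonlinearities to obtain $\dot H^s$ continuous dependence; this is precisely where Lemma~\ref{lem:diff_estim_Hs} and the resulting estimates \eqref{dual_grad_s_3}--\eqref{dual_grad_s_4} enter, so your remark about avoiding that step should be dropped.
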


%{\color{blue}associated to stability results}

\begin{re}
	As it is standard, the solution given in the above statements can be extended up to a maximal time of existence. In the subcritical cases, as the local time of existence depends solely of the norm of the initial data, one can prove the standard blow-up alternative. 
\end{re}

As an immediate consequence of part (a) we deduce a lower bound for the blow-up up rate of finite time solutions (see Corollary \ref{BUrate}). This rate was used as an additional assumption in \cite{MLG21} to prove the blow-up of the critical norm for the intercritical INLS equation. Our result implies that it always holds for this type of solution. The proof of part (b) relies in a different technique based in critical stability results in $\dot{H}^s$.

\begin{re}
	Although the proofs here extend in a similar fashion to the case $\max\{0,s_c\}\leq s< \frac{N}{2}$, we find that the increased technicality involved, albeit not being considerably deep, would impair the legibility of this manuscript. That said, the case $s_c< 0$, $s=0$ is already considered standard (it is treated, e.g, in \cite{g_8} and \cite{Boa}), and the case $s>1$ can be reduced to the case $0< s\leq1 $ by including up to $\lfloor s \rfloor-1$ derivatives in the corresponding norms.
\end{re}

\begin{re}
To the best of our knowledge the continuous dependence result in the homogeneous Sobolev space $\dot{H^s}$, with $0<s<1$, is new even for the classical NLS case $b=0$ (see Tao and Visan \cite{TVEJDE05} for the energy-critical case). Moveover, for the classical NLS in the inhomogeneous Sobolev setting, our techniques, based on the chain rule obtained by Fujiwara \cite{F22_chain}, provide an alternative proof to the result of Cazenave, Fang and Han \cite{CFH11,FH13} without relying on Besov spaces. 
\end{re}

\vskip10pt
In all of the above results, well-posedness was achieved under the assumptions $b+s<\min\{N, N/2+1\}$. These restrictions on $b$ and $s$ stem from the spatial norms used in the Strichartz estimates (in particular, from estimating $D^s(|x|^{-b})|u|^{\alpha}u \sim |x|^{-b-s}|u|^{\alpha}u$ in a $L^p$ space).
%, with $p\geq \min\{1,N/2+1\}$). 
%
{It is hence natural to investigate whether these restrictions are indeed sharp. In this direction, we have
	\begin{theorem}[Ill-posedness]\label{thm:illposed}
		Suppose that 
		$$
		0<b<N,\quad  \min\{N,N/2+1\}\leq b+s<N/2+2, \quad \alpha\in 2\mathbb{N}\quad \mbox{and, if }b+s\ge N, \mbox{ }b+s\not\in\mathbb{N}.
		$$
	Then there does not exist a $C^{\alpha+1}$-flow in $H^s(\mathbb{R}^N)$ generated by the inhomogeneous nonlinear Schrödinger equation. The same statement holds in $\dot{H}^s(\mathbb{R}^N)$.
	\end{theorem}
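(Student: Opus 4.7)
The plan is a standard Bourgain-type analytic ill-posedness argument: assume sufficient smoothness of the flow at zero, deduce that the first nonlinear Duhamel iterate must be bounded as an $(\alpha+1)$-linear operator on $H^s$, and then exhibit an obstruction via a precise Fourier-side asymptotic expansion of this iterate.

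I start by reducing to the boundedness of the first Duhamel iterate. Suppose, for contradiction, that a $C^{\alpha+1}$ flow map $u_0\mapsto u(t)$ on $H^s$ exists. Since $\alpha\in 2\mathbb{N}$, the nonlinearity $N(u)=|x|^{-b}u^{\alpha/2+1}\bar u^{\alpha/2}$ is polynomial of exact degree $\alpha+1$ in $(u,\bar u)$, so iterating the Duhamel formula \eqref{eq:duhamelop} on $\epsilon u_0$ and expanding in powers of $\epsilon$ shows that the first $\alpha$ Fr\'echet derivatives at $u_0=0$ vanish, while the $(\alpha+1)$-th Fr\'echet derivative is a symmetric multilinear form whose diagonal value is
$$ A(u_0)(t) := i\mu\int_0^t e^{i(t-\tau)\Delta}\,|x|^{-b}|e^{i\tau\Delta}u_0|^\alpha e^{i\tau\Delta}u_0\,d\tau. $$
The hypothesis therefore forces the multilinear estimate $\|A(u_0)(t)\|_{H^s}\leq C\|u_0\|_{H^s}^{\alpha+1}$ for $(u_0,t)$ in a neighbourhood of $(0,0)$.

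Next I carry out a Fourier-side analysis of $A(u_0)(t)$. Using $\widehat{|x|^{-b}}(\xi)=c_{b,N}|\xi|^{b-N}$ and the $(\alpha+1)$-fold convolution structure of $\widehat{|u_\tau|^\alpha u_\tau}$, the quantity $\widehat{A(u_0)(t)}(\xi)$ becomes a multilinear integral over frequency variables $(\vec\eta,\vec\zeta,\mu)$ with a Duhamel factor $(e^{-it\Omega}-1)/(-i\Omega)$, where $\Omega=\sum_i|\eta_i|^2-\sum_j|\zeta_j|^2-|\xi|^2$. For $u_0$ Schwartz with $u_0(0)\neq 0$, I expand this integral asymptotically as $|\xi|\to\infty$ by expanding $|\mu|^{b-N}$ around $\mu=\xi$ and the Duhamel factor around $\Omega=-|\xi|^2$. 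This produces a formal series in decreasing powers of $|\xi|$, starting with $|\xi|^{b-N-2}(1-e^{-it|\xi|^2})$, whose coefficients depend on $u_0(0)$, $\nabla u_0(0)$, $\Delta u_0(0)$ and on time-dependent oscillatory factors. The leading term is integrable against $|\xi|^{2s}\,d\xi$ at infinity precisely when $b+s<N/2+2$; a careful tracking of the subleading corrections sharpens this to the well-posedness threshold $b+s<N/2+1$. In the complementary regime $b+s\geq N$, a more direct obstruction arises from $|x|^{-b-s}$ ceasing to be locally integrable, while the hypothesis $b+s\notin\mathbb{N}$ sidesteps logarithmic resonances in this case.

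To close the argument, I select a family $\{u_0^{(n)}\}\subset H^s$ (for example, concentrating profiles $u_0^{(\lambda)}=\lambda^{N/2-s}\phi(\lambda\cdot)$ or Schwartz profiles tuned to activate the obstructing term in the expansion) and show that $\|A(u_0^{(n)})(t_n)\|_{H^s}$ diverges relative to $\|u_0^{(n)}\|_{H^s}^{\alpha+1}$ along a suitable sequence $t_n\to 0$, contradicting the multilinear bound from Step~1. The $\dot H^s$ statement follows by the same argument with homogeneous norms throughout. The main obstacle is the Fourier-side expansion itself: one must identify the correct obstructing subleading term, verify the nonvanishing of its coefficient on the chosen test family, and rule out hidden cancellations between the polynomial expansion of $|\mu|^{b-N}$ and the oscillatory expansion of the Duhamel factor.
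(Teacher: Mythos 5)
Your Step 1 (the Tzvetkov/Bejenaru--Tao reduction: a $C^{\alpha+1}$ flow would force the first Duhamel iterate $A(u_0)(t)$ to be bounded in $H^s$) is the same reduction the paper uses and is fine, up to a harmless slip (the first derivative at $\delta=0$ is $e^{it\Delta}u_0$, not zero; only the derivatives of order $2,\dots,\alpha$ vanish). The genuine gap is that the core of the theorem --- exhibiting data for which $A(u_0)(t)\notin H^s$ when $\min\{N,N/2+1\}\le b+s<N/2+2$ --- is never carried out, and your own sketch shows why it cannot be finessed: the leading Fourier-side term you extract, of size $|\xi|^{b-N-2}$ times a bounded oscillatory factor, is square-integrable against $|\xi|^{2s}d\xi$ precisely when $b+s<N/2+2$, i.e.\ throughout the claimed ill-posedness window. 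So the obstruction must come entirely from the contributions you discard when replacing the convolution $|\cdot|^{b-N}*\widehat{F}$ by its value at $\mu=\xi$, and you explicitly defer exactly this ("identify the correct obstructing subleading term, \dots rule out hidden cancellations"); the assertion that "careful tracking of the subleading corrections sharpens this to $b+s<N/2+1$" is the whole proof, not a remark. Likewise your treatment of the regime $b+s\ge N$ and of the hypothesis $b+s\notin\mathbb{N}$ ("$|x|^{-b-s}$ ceasing to be locally integrable", "sidesteps logarithmic resonances") is not an argument: in that regime $D^s|x|^{-b}=c\,|x|^{-b-s}$ only makes sense through analytic continuation of the homogeneous distribution, and one must still show that the iterate leaves $H^s$.

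For comparison, the paper's proof works in physical space. It decomposes $D^s(|x|^{-b}f)$, $f=|u|^\alpha u$, into a commutator term (controlled via the generalized Leibniz rule, Lemma \ref{gen_leib_1}), the term $(D^s|x|^{-b})\left(f(\tau,x)-f(\tau,0)\right)$ (controlled by a Lipschitz bound in $x$), and the main term $(D^s|x|^{-b})f(\tau,0)$. Then $D^s|x|^{-b}=c|x|^{-b-s}$ (Lemma \ref{lem:hormander}, which is where $b+s-N\notin\mathbb{N}\cup\{0\}$ enters), and $e^{i\tau\Delta}|x|^{-b-s}$ is described through Cazenave's $H$-function, analytically continued to $N<b+s<N+2$ via Gel'fand--Shilov (Lemma \ref{lem:extensaoH}); the oscillatory piece $\tau^{-\theta}e^{i|x|^2/4\tau}\left(|x|^2/4\tau\right)^{-\beta}$, integrated in time against $f(t-\tau,0)$, is shown to have infinite $L^2_x$ norm for Gaussian data and any fixed $t>0$ (Lemmas \ref{lem:explode1} and \ref{lem:explode2}); no family of data and no limit $t_n\to 0$ is needed. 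Until you produce, on the Fourier side, the analogue of this divergent piece and verify the nonvanishing of its coefficient, your plan does not prove the statement.
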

	As a consequence of the above theorem, no fixed-point argument can be applied in this regime, as it would give rise to an analytic flow (see Tzvetkov \cite{tzvetkov} and Bejenaru and Tao \cite{taobejenaru} for similar results for other dispersive equations). As such, Theorems \ref{InhTheo} and \ref{HomTheo} are indeed sharp.
	\begin{re}
		We comment on the restrictions present in Theomem \ref{thm:illposed}. The restriction $0<b<N$ is imposed in order for $|x|^{-b}$ to be locally integrable. If this is not imposed, one must give sense to the nonlinear term $|x|^{-b}|u|^\alpha u$, which would change the classical notion of integral solution.
		The restriction $b+s<N/2+2$ is necessary in order to identify the main singularity of the flow (it could be removed at the expense of heavier computations). The condition $b+s\notin \mathbb{N}$ is a technical assumption related to the analytic continuation of the homogeneous distribution $|x|^{-\lambda}$ for large values of $\lambda$ (see, for example, \cite{gelfandshilov}*{Section III.3.2}).
		Finally, we require $\alpha\in 2\mathbb{N}$ in order to be able to expand the nonlinearity as powers of $u$ and $\bar{u}$. This is a crucial feature of the ill-posedness argument, see \cite{taobejenaru,tzvetkov}.
	\end{re}

\bigskip 
This paper is organized as follows. In Section \ref{Preli}, we established some preliminary estimates. The well-posed theory is discussed in Section \ref{Well}. The last section is devoted to the proofs of the ill-posedness result.

\vspace{0.5cm}
\noindent 
\textbf{Acknowledgments.} L. C. was financed by grant \#2020/10185-1, São Paulo Research Foundation (FAPESP). S.C. was partially supported by Funda\c{c}\~ao para a Ci\^encia e Tecnologia, through CAMGSD, IST-ID
(projects UIDB/04459/2020 and UIDP/04459/2020) and through the project NoDES (PTDC/
MAT-PUR/1788/2020). L.G.F. was partially supported by Coordena\c{c}\~ao de Aperfei\c{c}oamento de Pessoal de N\'ivel Superior - CAPES (project 23038.001922/2022-26), Conselho Nacional de Desenvolvimento Cient\'ifico e Tecnol\'ogico - CNPq (project 305609/2019-0) and Funda\c{c}\~ao de Amparo a Pesquisa do Estado de Minas Gerais - FAPEMIG (project PPM-00685-18).\\ %The authors would like to thank the referees for their valuable comments which helped to improve the manuscript.
%\textbf{Data Availability Statement.} Data sharing not applicable to this article as no datasets were generated or analysed during the current study.
%	
\section{Preliminaries}\label{Preli}

\subsection{Notation}

Let us start this section by introducing the notation used throughout the paper. We use $C$ to denote various constants that may vary line by line. Given any positive quantities $a$ and $b$, the notation $a\lesssim b$ or $a = O(b)$ means that $a \leq Cb$, with $C$ uniform with respect to the set where $a$ and $b$ vary. If
necessary, we use subscript to indicate different parameters the constant may depends on. We also make use of the $o_\lambda(1)$ notation, which means a quantity that converges to zero as $\lambda$ tends to zero.  We denote by $p'$ the H\"older conjugate of $1 \leq p \leq \infty$ and we
use $a^+$ and $a^-$ to denote $a +\varepsilon$ and $a -\varepsilon$, respectively, for a sufficiently small $\varepsilon > 0$.

\subsection{Linear estimates}

We make use of some mixed space-time Lebesgue spaces, which are defined in order to satisfy the so-called Strichartz estimates. Such estimates are named after an article of Strichartz \cite{Strichartz}, based on estimates for Fourier restrictions on a sphere. They were later generalized by Kato \cite{Kato94}, Keel-Tao \cite{KT98} and by Foschi \cite{Foschi05}.

\begin{de}\label{Hs_adm} If $N \geq 1$ and $s \in (-1,1)$, the pair $(q,r)$ is called $\dot{H}^s$\textit{-admissible} if it satisfies the condition
\begin{equation}\label{hs_adm_eq}
\frac{2}{q} = \frac{N}{2}-\frac{N}{r}-s,    
\end{equation}
where
$$
2 \leq q,r \leq \infty, \text{ and } (q,r,N) \neq (2,\infty,2).
$$
In particular, if $s=0$, we say that the pair is $L^2$-admissible.
\end{de}

\begin{de}\label{As}
Given $N> 2$, consider the set
\begin{equation}
    \mathcal{A}_0 = \left\{(q,r)\text{ is } L^2\text{-admissible} \left|
    %\begin{cases} 
    \, 2^+ \leq r \leq \frac{2N}{N-2}
    %, & N \geq 3\\
    %\frac{2}{1-s} \leq q \leq \left(\left(\frac{2}{1-s}\right)^+\right)', & N = 2\\ 
    %\frac{2}{1-2s} \leq q \leq \infty, &N = 1
    %\end{cases}   
    \right.
    \right\}.
\end{equation}
For $N>2$ and $s \in (0,1)$, consider also
\begin{equation}
    \mathcal{A}_s = \left\{(q,r)\text{ is } \dot{H}^{s}\text{-admissible} \left|\,
    %\begin{cases} 
    \left(\frac{2N}{N-2s}\right)^+ \leq r \leq \left(\frac{2N}{N-2}\right)^-
    %, & N \geq 3\\
    %\frac{2}{1-s} \leq q \leq \left(\left(\frac{2}{1-s}\right)^+\right)', & N = 2\\ 
    %\frac{2}{1-2s} \leq q \leq \infty, &N = 1
    %\end{cases}   
    \right.
    \right\}
\end{equation}
and
\begin{equation}
    \mathcal{A}_{-s} = \left\{(q,r) \text{ is } \dot{H}^{-s}\text{-admissible} \left|
    %\begin{cases} 
    \left(\frac{2N}{N-2s}\right)^+ \leq r \leq \left(\frac{2N}{N-2}\right)^-
    %, & N \geq 3\\
    %\left(\frac{2}{1-s}\right)^+ \leq q \leq \left(\left(\frac{2}{1+s}\right)^+\right)', & N = 2\\ 
    %\left(\frac{2}{1-2s}\right)^+ \leq q \leq \infty, &N = 1
    %\end{cases}   
    \right.\right\}.
\end{equation}

When $N=1,2$ we replace the upper bound for the parameter $r$ in the definitions of $\mathcal{A}_s$ and $\mathcal{A}_{-s}$ by an arbitrarily large number. In the case $s = 1$, which we only consider for $N\geq 3$, we need to employ some values of $r$ which are larger than $2N/(N-2)$. The result which allows us to do so comes from Foschi \cite{Foschi05}, and one needs to be careful in the definition of the corresponding spaces to ensure the validity of the desired estimate. We then let $0<\epsilon_0 < \epsilon_1\ll 1$ be sufficiently small and define

\begin{equation}
    \mathcal{A}_1 = \left\{(q,r)\text{ is } \dot{H}^{1}\text{-admissible} \left|\,
    \frac{2N}{N-2-\epsilon_0/2} \leq r \leq \frac{2N}{N-2-\epsilon_0}
    \right.
    \right\}
\end{equation}
and
\begin{equation}
    \mathcal{A}_{-1} = \left\{(q,r)\text{ is } \dot{H}^{-1}\text{-admissible} \left|\,
    \frac{2N}{N-2+2\epsilon_1} \leq r \leq \frac{2N}{N-2+\epsilon_1}
    \right.
    \right\}.
\end{equation}

Note that, with the above definition of $\mathcal{A}_{\pm1}$, if $(q,r)\in \mathcal{A}_1$ and $(\tilde{q},\tilde{r})\in \mathcal{A}_{-1}$, then $\frac{1}{q}+\frac{1}{\tilde{q}}\in \left[1-\frac{2\epsilon_1-\epsilon_0}{4},1-\frac{\epsilon_1-\epsilon_0}{2}\right] \subsetneq [0,1]$, which enables us to use the  Kato-Strichartz estimates obtained by Foschi, see \cite[Remark 1.11]{Foschi05}.

Given a time-interval $I \subset \mathbb R$ and $0 \leq s\leq 1$, we define the following Strichartz norm
\begin{equation}
	\|u\|_{S(\dot{H}^s,I)} = \sup_{(q,r)\in \mathcal{A}_s}\|u\|_{L_t^q L_x^r(I)},	
\end{equation}
and the dual Strichartz norm
\begin{equation}
	\|u\|_{S'(\dot{H}^{-s},I)} = \inf_{(q,r)\in \mathcal{A}_{-s}}\|u\|_{L_t^{q'}L_x^{r'}(I)}.	
\end{equation}
 If $s=0$, we shall write $S(\dot{H}^0,I) = S(L^2,I)$ and $S'(\dot{H}^0,I) = S'(L^2,I)$. If $I=\mathbb{R}$, we will omit $I$. 

Note that none of these norms allow for the $L^\infty$ norm on time. The main reason for this is to allow the $S(\dot{H}^s)$ norm to be small in small time intervals. Since the $L^\infty_t L^2_x$ norm also plays an important role, we define

\begin{equation}
    \|u\|_{\tilde{S}(L^2,I)} = \|u\|_{{S}(L^2,I)} + \|u\|_{L^\infty_t L^2_x(I)}.
\end{equation}

We also introduce the mixed Sobolev norms
 $$
 \left\|u\right\|_{\dot{S}^s\left(L^2, I\right)}= \left\|D^su\right\|_{S\left(L^2, I\right)},\quad
\left\|u\right\|_{S^s\left(L^2, I\right)}=
 %\left(
 \left\|u\right\|_{S\left(L^2,I\right)}
 %^2
 +\left\|D^su\right\|_{S\left(L^2, I\right)}
 %^2\right)^{1/2}
 $$
 and
 $$
 \left\|u\right\|_{\tilde{S}^s\left(L^2, I\right)}=
 %\left(
 \left\|u\right\|_{\tilde{S}\left(L^2,I\right)}
 %^2
 +\left\|D^su\right\|_{\tilde{S}\left(L^2, I\right)}.
 %^2\right)^{1/2}
 $$

\end{de}

%\begin{re}
%In some works, the norm $\| \cdot \|_{S(L^2, I)}$ is called $\| \cdot \|_{S^0(I)}$, and the seminorm $\| \nabla \cdot \|_{S(L^2,I)}$ is called $\| \cdot \|_{S^1(I)}$. The notation $S(\dot{H}^{s})$ serves as a reminder that the corresponding $L^q_tL^r_x$ norms all scale as $L^\infty_t \dot{H}_{s}$.
%\end{re}

In this work, we consider the following Kato-Strichartz estimates for $s\geq 0$ (Cazenave \cite{cazenave}, Keel and Tao \cite{KT98}, Kato \cite{Kato94}, Foschi \cite{Foschi05})

\begin{equation}\label{S1}
\|e^{it\Delta}f\|_{
\tilde{S}(L^2)} \lesssim\|f\|_{L^2},
\end{equation}

\begin{equation}\label{S2}
\|e^{it\Delta}f\|_{S(\dot{H}^s)} \lesssim\|f\|_{\dot{H}^s},
\end{equation}
\begin{equation}\label{KS1}
\left\|\int_\mathbb{R}e^{i(t-\tau)\Delta}g(\cdot,\tau) \, d\tau\right\|_{\tilde{S}(L^2,I)} + \left\|\int_0^te^{i(t-\tau)\Delta}g(\cdot,\tau) \, d\tau \right\|_{\tilde{S}(L^2,I)}\lesssim\|g\|_{S'(L^2,I)},
\end{equation}
and
\begin{equation}\label{KS2}
\left\|\int_\mathbb{R}e^{i(t-\tau)\Delta}g(\cdot,\tau) \, d\tau \right\|_{S(\dot{H}^s,I)} + \left\|\int_0^te^{i(t-\tau)\Delta}g(\cdot,\tau) \, d\tau \right\|_{S(\dot{H}^s,I)}\lesssim\|g\|_{S'(\dot{H}^{-s},I)}.
\end{equation}

%\textbf{The standard Strichartz estimates} (Cazenave \cite{cazenave}, Keel and Tao \cite{KT98}, Foschi \cite{Foschi05})
%
%%\begin{equation}\label{S1}
%%\|e^{it\Delta}f\|_{S(L^2)} \lesssim\|f\|_{L^2},
%%\end{equation}
%
%\begin{equation}\label{S2}
%\|e^{it\Delta}f\|_{S(\dot{H}^s)} \lesssim\|f\|_{\dot{H}^s},
%\end{equation}
%\noeqref{S2}
%\begin{equation}\label{KS1}
%\left\|\int_\mathbb{R}e^{i(t-\tau)\Delta}g(\cdot,\tau) \, d\tau\right\|_{S(L^2,I)} + \left\|\int_0^te^{i(t-\tau)\Delta}g(\cdot,\tau) \, d\tau \right\|_{S(L^2,I)}\lesssim\|g\|_{S'(L^2,I)}.
%\end{equation}
%\noeqref{KS1}
%\textbf{The Kato-Strichartz estimate} (Kato \cite{Kato94}, Foschi \cite{Foschi05})
%\begin{equation}\label{KS2}
%\left\|\int_\mathbb{R}e^{i(t-\tau)\Delta}g(\cdot,\tau) \, d\tau \right\|_{S(\dot{H}^s,I)} + \left\|\int_0^te^{i(t-\tau)\Delta}g(\cdot,\tau) \, d\tau \right\|_{S(\dot{H}^s,I)}\lesssim\|g\|_{S'(\dot{H}^{-s},I)}.
%\end{equation}

\subsection{Fractional chain and Leibniz rules}

% \begin{lem}[Fractional chain rule for $C^1$ functions  {\cite[Theorem A.6]{KPV93}}]\label{chain_1} Suppose $F \in C^1(\mathbb C)$, $0 < s \leq 1$ and $1<p,p_1,p_2<+\infty$ are such that $\frac{1}{p}=\frac{1}{p_1}+\frac{1}{p_2}$. Then,
% \begin{equation}
%     \|D^s F(u)\|_{L^p} \lesssim \|F'(u)\|_{L^{p_1}}\|D^s u\|_{L^{p_2}}.
% \end{equation}
% \end{lem}

% Note that if $F(u) =|u|$, the above Lemma cannot be applied, since $F\notin C^1(\mathbb{C})$. In this case, we still have the following result.

We first recall the classical fractional chain rule proved by Christ and Weinstein \cite{CW91} (see also Kenig, Ponce and Vega \cite{KPV93}).

\begin{lem}[Fractional chain rule for Lipschitz-type functions  {\cite{CW91, KPV93}}]\label{chain_2} Let $0<s<1$ and suppose $F \in C(\mathbb C)$ and $G\in C(\mathbb{C},[0,\infty))$ satisfy, for all $u, v$,

\begin{equation}
    |F(u)-F(v)|\lesssim(G(u)+G(v))|u-v|.
\end{equation}
If $1<p,p_1,p_2<+\infty$ are such that $\frac{1}{p}=\frac{1}{p_1}+\frac{1}{p_2}$, then
\begin{equation}
    \|D^s F(u)\|_{L^p} \lesssim \|G(u)\|_{L^{p_1}}\|D^s u\|_{L^{p_2}}.
\end{equation}
\end{lem}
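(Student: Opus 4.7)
My plan is to reduce the $L^p$ estimate to a pointwise inequality via the square-function characterization of the fractional derivative. For $0<s<1$ and $1<p<\infty$, one has the equivalence
$$
\|D^s g\|_{L^p} \sim \|\mathcal{S}_s g\|_{L^p}, \qquad \mathcal{S}_s g(x):=\left(\int_{\mathbb{R}^N} \frac{|g(x+h)-g(x)|^2}{|h|^{N+2s}}\,dh\right)^{1/2}.
$$
Applied with $g = F(u)$, the Lipschitz-type assumption on $F$ yields the pointwise bound
$$
\mathcal{S}_s(F(u))(x) \lesssim G(u(x))\,\mathcal{S}_s u(x) + \left(\int_{\mathbb{R}^N} \frac{G(u(x+h))^2\,|u(x+h)-u(x)|^2}{|h|^{N+2s}}\,dh\right)^{1/2} =: I(x) + II(x),
$$
separating the contribution of $G$ at the base point $x$ from the one at the shifted points.

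The term $I$ is straightforward: Hölder's inequality with $\frac{1}{p}=\frac{1}{p_1}+\frac{1}{p_2}$, combined with the same square-function equivalence applied to $u$, gives
$$
\|I\|_{L^p} \leq \|G(u)\|_{L^{p_1}}\|\mathcal{S}_s u\|_{L^{p_2}} \lesssim \|G(u)\|_{L^{p_1}}\|D^s u\|_{L^{p_2}},
$$
which is exactly the desired bound.

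The main obstacle is $II$, where $G(u(\cdot))$ is evaluated at the moving argument $x+h$ and cannot be pulled outside the $h$-integral. I would decompose the integral dyadically into scales $|h|\sim 2^{-k}$ and, on each scale, control the average of $G(u(x+h))^2$ by the Hardy-Littlewood maximal function $M(G(u)^2)(x)$ times the corresponding dyadic piece of $\mathcal{S}_s u(x)^2$; reassembling via the Fefferman-Stein vector-valued maximal inequality would then produce a pointwise bound of the form $II(x) \lesssim M(G(u)^2)(x)^{1/2}\,\mathcal{S}_s u(x)$, after which Hölder together with the $L^{p_1/2}$-boundedness of $M$ concludes. In the delicate regime where this direct approach loses integrability, I expect to fall back on the original Christ--Weinstein strategy: Littlewood--Paley decompose $u=\sum_j P_j u$, telescope $F(u)=F(u_{<0})+\sum_k [F(u_{<k+1})-F(u_{<k})]$, apply the Lipschitz-type hypothesis to each increment, and invoke the Triebel--Lizorkin characterization $\|D^s f\|_{L^p}\sim \|(\sum_k 2^{2ks}|P_k f|^2)^{1/2}\|_{L^p}$ to recombine the pieces. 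The hardest step is propagating the pointwise inequality through the reassembly so that no spurious loss in the maximal exponent appears.
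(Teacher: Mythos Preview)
The paper does not supply a proof of this lemma: it is recalled from Christ--Weinstein \cite{CW91} and Kenig--Ponce--Vega \cite{KPV93} and used as a black box in the subsequent nonlinear estimates. There is therefore no paper proof to compare against.

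On the merits of your sketch: the square-function reduction and the treatment of $I$ are fine. The gap is exactly where you locate it, in $II$. Your proposed fix---bounding, on each shell $|h|\sim 2^{-k}$, the average of $G(u(x+h))^2$ by $M(G(u)^2)(x)$ times the corresponding dyadic piece of $\mathcal{S}_s u(x)^2$---does not follow as stated: the two factors $G(u(x+h))^2$ and $|u(x+h)-u(x)|^2$ are coupled inside the $h$-integral, a maximal-function bound on one of them would require a pointwise supremum in $h$ on the other (which $M$ does not control), and Cauchy--Schwarz in $h$ produces the wrong exponents. The Littlewood--Paley telescoping you mention as a fallback is precisely the original Christ--Weinstein argument and does work; it avoids $II$ altogether by never passing through the Gagliardo square function. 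If you prefer to stay with a difference-based characterization, the Kenig--Ponce--Vega variant replaces $\mathcal{S}_s$ by an \emph{averaged} difference operator (an $L^1$ average over $|y|<t$ inside a square integral in $t$), for which the analogue of $II$ is genuinely dominated by $M(G(u))(x)$ times the corresponding square function of $u$, and the Fefferman--Stein step then closes.
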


If the desired function is not Lipschitz, but H\"older continuous instead, we have the following version of the chain rule obtained by Visan \cite{Visan07}.

\begin{lem}[Fractional chain rule for H\"older-continuous functions \cite{Visan07}]\label{chain_3} Let $F$ be a H\"older-continuous function of order $0 < \alpha < 1$. Then, for every $0<s<\alpha$, $1<p<\infty$ and $\frac{s}{\alpha}<\sigma<1$, we have

\begin{equation}
    \|D^s F(u)\|_{L^p} \lesssim \|u\|_{L^{\left(\alpha-\frac{s}{\sigma}\right)p_1}}^{\alpha-\frac{s}{\sigma}}\|D^{\sigma} u\|_{L^{\frac{s}{\sigma}p_2}}^{\frac{s}{\sigma}},
\end{equation}
provided $\frac{1}{p}=\frac{1}{p_1}+\frac{1}{p_2}$ and $(1-\frac{s}{\alpha \sigma})p_1>1$.
\end{lem}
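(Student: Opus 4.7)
The plan is to exploit the Hölder continuity of $F$ via a pointwise representation of the Riesz derivative, transferring the $s$-regularity of $F(u)$ to a $\sigma$-regularity of $u$ at the cost of a Hölder loss. First, I would use the Strichartz--Triebel pointwise characterization
$$
|D^s g(x)| \lesssim \left(\int_{\mathbb{R}^N} \frac{|g(x+h) - g(x)|^2}{|h|^{N+2s}} \, dh\right)^{1/2}, \qquad 0<s<1,
$$
applied with $g=F(u)$. The Hölder bound $|F(u(x+h))-F(u(x))| \lesssim |u(x+h) - u(x)|^\alpha$ then yields the pointwise estimate
$$
|D^s F(u)(x)| \lesssim \left(\int_{\mathbb{R}^N} \frac{|u(x+h) - u(x)|^{2\alpha}}{|h|^{N+2s}} \, dh\right)^{1/2}.
$$

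The next, algebraic, step is to split $\alpha = \frac{s}{\sigma} + (\alpha - \frac{s}{\sigma})$; the assumption $\frac{s}{\alpha}<\sigma$ ensures both exponents are positive. Using $|u(x+h) - u(x)|^{\alpha - s/\sigma} \le (|u(x+h)|+|u(x)|)^{\alpha - s/\sigma}$ and a dyadic decomposition of the $h$-integral, one extracts the low-order factor and controls it by a Hardy--Littlewood maximal expression $\mathcal{M}(|u|^{(\alpha - s/\sigma) r})(x)^{1/r}$ for a suitable $r>1$; the condition $(1 - \frac{s}{\alpha\sigma})p_1 > 1$ is what ultimately allows the maximal function to be bounded on the relevant Lebesgue space. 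The remaining factor $\bigl(\int |u(x+h)-u(x)|^{2s/\sigma} |h|^{-N-2s}\,dh\bigr)^{1/2}$ is a Triebel--Lizorkin-type seminorm of $u$ of smoothness $\sigma$ with integrability exponent $2s/\sigma < 2\alpha < 2$, and I would dominate it pointwise by $(\mathcal{M}^{\sharp} D^\sigma u(x))^{s/\sigma}$, where $\mathcal{M}^{\sharp}$ is an appropriate square-function/maximal operator arising from Littlewood--Paley decomposition.

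Having obtained the pointwise bound
$$
|D^s F(u)(x)| \lesssim \bigl(\mathcal{M}(|u|^{(\alpha - s/\sigma) r})(x)\bigr)^{1/r} \cdot \bigl(\mathcal{M}^{\sharp} D^\sigma u(x)\bigr)^{s/\sigma},
$$
I would conclude by applying Hölder's inequality in $L^p$ with exponents $p_1, p_2$ satisfying $\frac{1}{p}=\frac{1}{p_1}+\frac{1}{p_2}$, followed by the $L^{p_1}$ boundedness of $\mathcal{M}$ (guaranteed by the hypothesis on $p_1$) and the $L^{(s/\sigma)p_2}$ boundedness of $\mathcal{M}^{\sharp}$, producing the stated estimate
$$
\|D^s F(u)\|_{L^p} \lesssim \|u\|_{L^{(\alpha - s/\sigma) p_1}}^{\alpha - s/\sigma} \,\|D^\sigma u\|_{L^{(s/\sigma) p_2}}^{s/\sigma}.
$$

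The main obstacle, and the step that requires the most care, is the pointwise domination of the nonstandard fractional seminorm with integration exponent $2s/\sigma \neq 2$ by a maximal version of $D^\sigma u$. This is precisely where the constraint $\sigma > s/\alpha$ becomes sharp: it ensures $2s/\sigma < 2\alpha \le 2$, a prerequisite for the Triebel--Lizorkin embedding $F^{\sigma}_{p,2} \hookrightarrow F^{\sigma}_{p, 2s/\sigma}$ and the associated Fefferman--Stein-type vector-valued maximal estimates to go through. The algebraic splitting and the $L^p$ Hölder step are routine once this analytic core is in place.
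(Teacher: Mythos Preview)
The paper does not prove this lemma: it is stated with a citation to Visan \cite{Visan07} and used as a black box, so there is no ``paper's own proof'' to compare against.

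As for your proposal itself, the overall strategy (square-function characterization of $D^s$, Hölder continuity of $F$, the splitting $\alpha=\tfrac{s}{\sigma}+(\alpha-\tfrac{s}{\sigma})$, and maximal-function control) is the right one and is essentially what Visan does in \cite{Visan07}, though she works directly with the Littlewood--Paley square function $\bigl(\sum_N N^{2s}|P_N F(u)|^2\bigr)^{1/2}$ rather than the difference-quotient integral. Two points of caution: first, the Strichartz characterization you invoke is a \emph{norm equivalence} $\|D^s g\|_{L^p}\sim\bigl\|\bigl(\int |g(\cdot+h)-g(\cdot)|^2|h|^{-N-2s}dh\bigr)^{1/2}\bigr\|_{L^p}$, not a pointwise bound on $|D^s g(x)|$; this does not break the argument, but the first displayed inequality in your sketch is false as written. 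Second, the step you yourself flag as the main obstacle---dominating the fractional seminorm with integrability exponent $2s/\sigma<2$ by a maximal version of $D^\sigma u$---remains an outline here; in Visan's proof this is made precise via Bernstein-type bounds on the Littlewood--Paley pieces together with the Fefferman--Stein vector-valued maximal inequality, and your sketch would need a comparable amount of detail to be complete.
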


%\subsection{Leibniz rule}

We also prove generalizations of the Leibniz rule suitable for singular weights.

\begin{lem}[Generalized Leibniz I]\label{gen_leib_1} Let $0<s<1$, $f, g \in \mathcal{S}(\mathbb{R}^N)$, $A$ be a Lebesgue-measurable set and $\frac{1}{p_i}+\frac{1}{q_i} = \frac{1}{p}$, with $p \in (0,\infty)$, $p_i,q_i\in (1,\infty]$ for $i \in \{1, 2\}$. Then
\begin{align}
    \|D^s(fg)-(D^sf)g - f(D^sg)\|_{L^p}  &\lesssim
    \|f\|_{L^{p_1}(A)}\|D^s g\|_{L^{q_1}}
    +\|f\|_{L^{p_2}(A^c)}\|D^s g\|_{L^{q_2}}
\end{align}

\end{lem}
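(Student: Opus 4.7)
The strategy has three steps: reduce by bilinearity, invoke a pointwise commutator identity, and bound the resulting bilinear singular integral using maximal and square functions.

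First, the operator $T(f,g) := D^s(fg) - (D^sf)g - f(D^sg)$ is linear in $f$. Writing $f = f\mathbf{1}_A + f\mathbf{1}_{A^c}$ and applying the triangle inequality, the problem reduces to proving the single-piece estimate
$$\|T(h, g)\|_{L^p} \lesssim \|h\|_{L^{r_1}}\|D^sg\|_{L^{r_2}},$$
for any $h \in L^{r_1}$ and any exponents $r_1, r_2\in(1,\infty]$ with $1/r_1 + 1/r_2 = 1/p$. Applying this with $(r_1,r_2)=(p_1,q_1)$ to $h=f\mathbf{1}_A$ and with $(r_1,r_2)=(p_2,q_2)$ to $h=f\mathbf{1}_{A^c}$, and summing, recovers the statement.

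Second, since $0<s<1$, the operator $D^s=(-\Delta)^{s/2}$ admits the singular integral representation
$$D^su(x) = c_{N,s}\,\mathrm{p.v.}\!\int \frac{u(x) - u(y)}{|x-y|^{N+s}}\,dy,$$
from which a direct algebraic manipulation yields the Kato--Ponce pointwise commutator identity
$$T(h, g)(x) = -c_{N,s}\,\mathrm{p.v.}\!\int \frac{(h(x) - h(y))(g(x) - g(y))}{|x-y|^{N+s}}\,dy.$$

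Third, I would bound the bilinear integral by splitting the integration domain at a scale $\rho=\rho(x)$ to be optimized. On the far region $|x-y|\geq\rho$, the crude bound $|h(x)-h(y)|\leq |h(x)|+|h(y)|$ combined with kernel integrability gives a contribution dominated by $\rho^{-s}$ times products of Hardy--Littlewood maximal functions of $h$ and $g$. On the near region $|x-y|<\rho$, Cauchy--Schwarz in $y$ brings in the square function
$$S_sg(x) = \left(\int \frac{|g(x) - g(y)|^2}{|x-y|^{N+2s}}\,dy\right)^{1/2},$$
which satisfies $\|S_sg\|_{L^q} \simeq \|D^sg\|_{L^q}$ for $q\in(1,\infty)$ by the Triebel--Lizorkin characterization of $\dot W^{s,q}$. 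Balancing the two contributions via a suitable choice of $\rho$ yields a schematic pointwise bound of the form $|T(h,g)(x)| \lesssim Mh(x)\,S_sg(x)$, after which Hölder's inequality together with the $L^{r_1}$-boundedness of $M$ conclude the estimate.

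The principal obstacle is that the desired bound assumes no fractional smoothness of $h$, which is essential since in the intended application $h\sim \mathbf{1}_A\,|x|^{-b}$ has no meaningful $D^s$-norm. This precludes a direct appeal to the classical commutator estimates of Kato--Ponce or Kenig--Ponce--Vega, which place $\|D^sh\|$ on the right-hand side; one must instead exploit the genuine bilinear cancellation in the pointwise identity rather than estimate each of the three terms in $T(h,g)$ individually.
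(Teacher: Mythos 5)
Your first two steps are fine and the first one is exactly what the paper does: the decomposition $f=\mathbbm{1}_A f+\mathbbm{1}_{A^c}f$ together with bilinearity reduces the lemma to the two-exponent commutator estimate $\|D^s(hg)-(D^sh)g-h(D^sg)\|_{L^p}\lesssim \|h\|_{L^{r_1}}\|D^sg\|_{L^{r_2}}$, and the pointwise identity $T(h,g)(x)=-c_{N,s}\,\mathrm{p.v.}\int (h(x)-h(y))(g(x)-g(y))|x-y|^{-N-s}\,dy$ is correct for $0<s<1$. The gap is in your third step, i.e.\ in your attempted proof of the core commutator bound itself. In the near region $|x-y|<\rho$, Cauchy--Schwarz forces you to pair $S_sg(x)$ with the factor $\bigl(\int_{|x-y|<\rho}|h(x)-h(y)|^2|x-y|^{-N}dy\bigr)^{1/2}$, and this is \emph{not} controlled by $Mh(x)$: the kernel $|x-y|^{-N}$ is not integrable at the diagonal, and $h$ (which in the application is essentially $\mathbbm{1}_A$ times a weight) has no modulus of continuity to compensate, so the quantity is generically logarithmically divergent. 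In the far region, replacing $|g(x)-g(y)|$ by $|g(x)|+|g(y)|$ produces $Mg$, which cannot appear in the final estimate (both $T(h,g)$ and $\|D^sg\|_{L^{r_2}}$ are unchanged under $g\mapsto g+\mathrm{const}$, while $Mg$ is not), and if you keep the difference you again face $\int_{|x-y|\ge\rho}|x-y|^{-N}dy=\infty$ when trying to extract only $S_sg$. In fact the asserted pointwise bound $|T(h,g)(x)|\lesssim Mh(x)\,S_sg(x)$ is false: taking $g$ with modulus of continuity $\simeq r^{s}/\log(1/r)$ at a point $x$ (so $S_sg(x)<\infty$) and $h$ bounded with a suitable sign pattern, the bilinear integral diverges like $\log\log$, because with $h$ merely bounded the integral genuinely sees the $L^1$-type modulus $\int|g(x)-g(y)|\,|x-y|^{-N-s}dy$ (a Triebel--Lizorkin $\dot F^{s}_{\cdot,1}$ quantity), which is strictly stronger than the square-function/$\dot W^{s,q}$ control you invoke.

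So the estimate with zero derivatives on $h$ cannot be obtained by this maximal/square-function pointwise scheme; it is a genuinely frequency-side (paraproduct/Coifman--Meyer) statement. This is precisely the content of Kenig--Ponce--Vega, Theorem A.8 (with $\alpha_1=0$, $p_1=\infty$ allowed), which the paper's proof simply cites: the commutator is written as $T(f,D^sg)$ for a bounded bilinear operator $L^{p_i}\times L^{q_i}\to L^p$, and then the $A$/$A^c$ splitting gives the lemma. To repair your write-up, either cite that theorem (as the paper does) or carry out a Littlewood--Paley proof in which the commutator structure cancels the paraproduct term that would place derivatives on $h$; the remaining terms are handled by Coifman--Meyer-type bounds, which also cover the range $p\le 1$ allowed in the statement.
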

\begin{proof} Kenig, Ponce and Vega \cite[Theorem A.8]{KPV93} proved
\begin{align}
   D^s(fg)-(D^sf)g - f(D^sg)  
    = T(f,D^s g),
\end{align}
 where $T: L^{p_i}\times L^{q_i} \to L^{p}$, $i \in \{1,2\}$ is a bounded bilinear operator. By writing $f = \mathbbm{1}_{A}f + \mathbbm{1}_{A^c}f$, the result follows.
\end{proof}

% \begin{corollary}[\ref{CM_78}]\label{CM_teo} Let $m$ be a smooth function on $\mathbb{R}^N\times \mathbb{R}^N \backslash \{ (0,0) \}$ that satisfies 
% \begin{equation}\label{condition_mult}
%     |\partial^\alpha \partial^\beta m(\xi,\eta)| \lesssim_{\alpha,\beta} (|\xi| + |\eta|)^{-(|\alpha|+|\beta|)}
% \end{equation}
% for all multiindices $\alpha$, $\beta$ such that $|\alpha|+|\beta| \leq 2N+1$ and all $(\xi,\eta) \neq (0,0)$. Then the bilinear operator
% \begin{equation}
%     T_m(f,g) = \int \int m(\xi,\eta) \hat{f}(\xi) \hat{g}(\eta) e^{2\pi i (\xi+\eta)\cdot x}\, d\xi d\eta
% \end{equation}
% satisfies
% \begin{equation}
%     \|T_m(f,g)\|_{L^p} \lesssim_{p,q,r,N} \|f\|_{L^q} \|g\|_{L^r}
% \end{equation}
% for $p\in (1,\infty)$ and $q,r \in (1,\infty]$ such that $\frac{1}{p} = \frac{1}{q}+\frac{1}{r}$.
% \end{corollary}

% \begin{re} The condition \eqref{condition_mult} is satisfied if $m$ is homogeneous of degree zero and smooth away from the origin. Indeed, in this case, $\partial^\alpha \partial^\beta m$ is homogeneous of degree $-(|\alpha|+|\beta|)$, thus, for all $(\xi,\eta) \neq (0,0)$, 
% \begin{equation}
%      |\partial^\alpha \partial^\beta m(\xi,\eta)| \lesssim |(\xi,\eta)|^{-(|\alpha|+|\beta|)}\, \sup_{|(\tilde\xi,\tilde\eta)| =1} |\partial^\alpha \partial^\beta m (\tilde \xi,\tilde \eta)|\lesssim (|\xi| + |\eta|)^{-(|\alpha|+|\beta|)}.
% \end{equation}

% \end{re}

The following result follows immediately from Lemma \ref{gen_leib_1} and H\"older inequality.

\begin{cor}[Generalized Leibniz II]\label{gen_leib_2} Let $0<s<1$, $f, g \in \mathcal{S}(\mathbb{R}^N)$, $A, B$ be Lebesgue-measurable sets and $\frac{1}{p_i}+\frac{1}{q_i} = \frac{1}{p}$, with $p \in (1,\infty)$, $q_i,r_i\in (1,\infty]$ for $i \in \{1, 2,3,4\}$. Then
\begin{align}
    \|D^s(fg)\|_{L^p}  &\lesssim
    \|D^sf\|_{L^{p_1}(A)}\|g\|_{L^{q_1}}
    +\|D^sf\|_{L^{p_2}(A^c)}\|g\|_{L^{q_2}}\\
    &\quad+\|f\|_{L^{p_3}(B)}\|D^s g\|_{L^{q_3}}
    +\|f\|_{L^{p_4}(B^c)}\|D^s g\|_{L^{q_4}}
\end{align}
\end{cor}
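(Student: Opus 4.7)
The plan is to deduce the estimate directly from Lemma \ref{gen_leib_1} combined with H\"older's inequality, as the sentence preceding the statement already suggests. The starting point is the algebraic identity
$$
D^s(fg) = \bigl[D^s(fg)-(D^sf)g - f(D^sg)\bigr] + (D^sf)g + f(D^sg),
$$
which, by the triangle inequality in $L^p$, reduces the task to controlling three separate pieces: the Kenig--Ponce--Vega remainder, and the two pointwise products $(D^sf)g$ and $f(D^sg)$.

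The first piece is exactly the quantity bounded by Lemma \ref{gen_leib_1}. I would apply that lemma with the splitting set taken to be $B$ and exponents $(p_3,q_3),(p_4,q_4)$, so that the output
$$
\|D^s(fg)-(D^sf)g - f(D^sg)\|_{L^p} \lesssim \|f\|_{L^{p_3}(B)}\|D^s g\|_{L^{q_3}} + \|f\|_{L^{p_4}(B^c)}\|D^s g\|_{L^{q_4}}
$$
coincides with the last two summands listed in the corollary. For the product $(D^sf)g$ I would decompose $(D^sf)g = (D^sf)\mathbbm{1}_A\, g + (D^sf)\mathbbm{1}_{A^c}\, g$ and apply H\"older with exponents $(p_1,q_1)$ and $(p_2,q_2)$ respectively, producing the first two summands. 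The remaining term $f(D^sg)$ is handled analogously by splitting $f = \mathbbm{1}_B f + \mathbbm{1}_{B^c} f$ and applying H\"older with $(p_3,q_3),(p_4,q_4)$, which simply reproduces the last two summands already obtained from Lemma \ref{gen_leib_1}.

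There is no genuine obstacle; the argument is purely mechanical. The only point requiring attention is the coordination of the splitting set used when invoking Lemma \ref{gen_leib_1} (that is, choosing $B$ rather than $A$ there) with the H\"older splittings of the two product terms, so that after summing the three estimates the bound collapses precisely to the four summands written in the statement rather than a redundant longer list.
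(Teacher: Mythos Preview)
Your proposal is correct and is exactly the approach the paper intends: the paper's proof consists of the single sentence ``follows immediately from Lemma \ref{gen_leib_1} and H\"older inequality,'' and you have spelled out precisely those details, including the only nontrivial point of choosing $B$ (rather than $A$) as the splitting set when invoking Lemma \ref{gen_leib_1} so that its output matches the last two summands.
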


The above result is essential for dealing with the inhomogeneous term in the nonlinearity, since in view of integrability restrictions we need to consider two different Lebesgue norms in order to study the function $|x|^{-b}$. In the next result, we explore this idea.

%in the proof of the nonlinear estimates (see Lemma \ref{lem_guz}). 
%by writing $|x|^{-b}=\mathbbm{1}_{B_R}|x|^{-b}+\mathbbm{1}_{B_R^c}|x|^{-b}$ where $B_R$ the ball of radius $R>0$ centered at the origin

\begin{lem}\label{basic_interp} If $a>0$, $ 1< p  < \frac{N}{a}$, $0 \leq s < \frac{N}{p}-a$,  and $q_{\pm\eta}$ is such that $\frac{1}{p} = \frac{a\pm\eta}{N}+\frac{1}{q_{\pm\eta}}$, for small $\eta>0$, then

\begin{equation}
    \| D^s(|x|^{-a} f)\|_{L^p}
    +
    \|D^s(|x|^{-a}f)-(D^s |x|^{-a})f\|_{L^p} 
     \lesssim_{\eta} 
    \left[\|D^s f\|_{L^{q_{\eta}}} \|D^s f\|_{L^{q_{-\eta}}}\right]^\frac{1}{2}
\end{equation}

\begin{proof}
Define $q_{\pm\eta,s}$ as 

\begin{equation}
\frac{1}{p} = \frac{a+s\pm\eta}{N}+\frac{1}{q_{\pm\eta,s}}.    
\end{equation}

For $0<s<1$, using Lemma \ref{gen_leib_2}, H\"older and denoting by $B_R$ the ball of radius $R>0$ centered at the origin, we write
\begin{align}
    \| D^s(|x|^{-a} f)\|_p +
     \|D^s(|x|^{-a}f)-(D^s |x|^{-a})f\|_{p}
     &\\
     &\hspace{-1.5cm}\lesssim\|D^s|x|^{-a}\|_{L^{\frac{N}{a+s+\eta}}(B_{R_1})} \|f\|_{q_{\eta,s}} +
    \|D^s|x|^{-a}\|_{L^{\frac{N}{a+s-\eta}}(B_{R_1}^c)}\|f\|_{q_{-\eta,s}}\\
    &\hspace{-1.5cm}\quad+\||x|^{-a}\|_{L^{\frac{N}{a+\eta}}(B_{R_2})} \|D^s f\|_{q_{\eta,0}}  
    +\||x|^{-a}\|_{L^{\frac{N}{a-\eta}}(B_{R_2}^c)} \|D^s f\|_{q_{\eta,0}} \\
    &\hspace{-1.5
    cm}\lesssim_{\eta} R_1^{\eta} \|f\|_{q_{\eta,s}} + R_1^{-\eta} \|f\|_{q_{-\eta,s}} + R_2^{\eta} \|D^s f\|_{q_{\eta,0}} + R_2^{-\eta} \|D^s f\|_{q_{-\eta,0}}.
\end{align}

Choosing $$R_1^\eta = \left[\frac{\|f\|_{q_{-\eta,s}}}{  \|f\|_{q_{\eta,s}} }\right]^\frac{1}{2}$$
and
$$R_2^\eta = \left[\frac{\|D^sf\|_{q_{-\eta,0}}}{  \|D^s f\|_{q_{\eta,0}} }\right]^\frac{1}{2},$$
we find
$$
 \| D^s(|x|^{-a} f)\|_p
    +
    \|D^s(|x|^{-a}f)-(D^s |x|^{-a})f\|_{p} 
     \lesssim_{\eta} 
    \left[\|f\|_{q_{\eta, s}} \|f\|_{q_{-\eta,s}}\right]^\frac{1}{2}+\left[\|D^s f\|_{q_{\eta,0}} \|D^s f\|_{q_{-\eta,0}}\right]^\frac{1}{2}.
$$

Finally by Sobolev $\|f\|_{q_{\pm\eta,s}}\lesssim \|D^s f\|_{q_{\pm\eta,0}}=\|D^s f\|_{q_{\pm\eta}}$, and we finish the proof for $0<s<1$. The case $s=0$ is analogous, and the case $s \geq 1$ follows from induction, by writing $D^s \sim D^{s-1} \nabla$, iterating the argument and using the classical Leibniz rule $\partial^{\alpha}(fg) = \sum_{\alpha_1+\alpha_2 = \alpha}\partial^{\alpha_1}f \partial^{\alpha_2}g$, for $\alpha, \alpha_1,\alpha_2 \in \mathbb Z^{N}_{\geq 0}$.
\end{proof}
\end{lem}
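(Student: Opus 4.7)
The plan is to reduce everything to Corollary \ref{gen_leib_2} and Lemma \ref{gen_leib_1}, splitting the singular weight $|x|^{-a}$ (and its fractional derivative $D^s|x|^{-a}\sim |x|^{-a-s}$) across a ball $B_R$ and its complement, so that the integrability of $|x|^{-c}$ is guaranteed at infinity on one piece and near the origin on the other. After that, I optimize in the radii.

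First I treat $0<s<1$. Applying Corollary \ref{gen_leib_2} (and Lemma \ref{gen_leib_1} for the commutator piece) with $A=B_{R_1}$ and $B=B_{R_2}$, and using the elementary computations
$$
\bigl\||x|^{-c}\bigr\|_{L^{N/(c+\eta)}(B_R)}\sim R^{\eta},\qquad \bigl\||x|^{-c}\bigr\|_{L^{N/(c-\eta)}(B_R^c)}\sim R^{-\eta}
$$
for $\eta>0$ small enough that $c-\eta>0$, applied to $c=a+s$ (on the derivative of the weight) and $c=a$ (on the bare weight), I obtain
$$
\| D^s(|x|^{-a} f)\|_{L^p}
+\|D^s(|x|^{-a}f)-(D^s |x|^{-a})f\|_{L^p}
\lesssim_{\eta} R_1^{\eta}\|f\|_{L^{q_{\eta,s}}}+R_1^{-\eta}\|f\|_{L^{q_{-\eta,s}}}+R_2^{\eta}\|D^s f\|_{L^{q_{\eta}}}+R_2^{-\eta}\|D^s f\|_{L^{q_{-\eta}}},
$$
where the auxiliary exponents $q_{\pm\eta,s}$ are defined by $\frac{1}{p}=\frac{a+s\pm\eta}{N}+\frac{1}{q_{\pm\eta,s}}$.

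Next, I balance the two $R_1$-terms and the two $R_2$-terms by choosing
$R_1^{\eta}=[\|f\|_{L^{q_{-\eta,s}}}/\|f\|_{L^{q_{\eta,s}}}]^{1/2}$ and $R_2^{\eta}=[\|D^s f\|_{L^{q_{-\eta}}}/\|D^s f\|_{L^{q_{\eta}}}]^{1/2}$, which turns each pair into its geometric mean. The Sobolev embedding $\|f\|_{L^{q_{\pm\eta,s}}}\lesssim \|D^s f\|_{L^{q_{\pm\eta}}}$ (valid since $\frac{1}{q_{\pm\eta,s}}=\frac{1}{q_{\pm\eta}}-\frac{s}{N}$) then absorbs the first geometric mean into the desired right-hand side. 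The case $s=0$ reduces to Hölder after the same dyadic splitting, and the case $s\geq 1$ follows by induction via $D^s\sim D^{s-1}\nabla$ together with the classical Leibniz rule $\partial^{\alpha}(fg)=\sum_{\alpha_1+\alpha_2=\alpha}\partial^{\alpha_1}f\,\partial^{\alpha_2}g$.

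The main technical obstacle is bookkeeping: I need to verify that, for all small $\eta>0$, every exponent appearing in the argument lies in the admissible range of Corollary \ref{gen_leib_2} and of the Sobolev embedding. This is exactly what the hypotheses $a>0$, $1<p<N/a$ and $0\leq s<N/p-a$ ensure; they guarantee $N/(a\pm\eta),\,N/(a+s\pm\eta)\in(1,\infty)$, and that $q_{\pm\eta}$ and $q_{\pm\eta,s}$ are finite and larger than $1$. Apart from checking these ranges, the rest of the argument is an optimization, which is why I expect the proof to be short but delicate in its choice of exponents.
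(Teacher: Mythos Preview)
Your proposal is correct and follows essentially the same approach as the paper: both split $|x|^{-a}$ and $D^s|x|^{-a}$ across balls $B_{R_1}$, $B_{R_2}$ via Corollary \ref{gen_leib_2} and Lemma \ref{gen_leib_1}, optimize the radii to obtain geometric means, and finish with the Sobolev embedding $\|f\|_{L^{q_{\pm\eta,s}}}\lesssim\|D^s f\|_{L^{q_{\pm\eta}}}$, handling $s=0$ and $s\ge1$ exactly as you describe. Your explicit check that the hypotheses force all the exponents into the admissible ranges is a welcome addition that the paper leaves implicit.
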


{To deal with the continuous dependence, we recall a result by Fujiwara \cite[Corollary 1.4]{F22_chain}.

\begin{lem}\label{lem:diff_estim_Hs} Let $\alpha > 0$ and $F \in C^1(\mathbb C)$ be such that $F(0) = F'(0)= 0$ and 
\begin{align}
    |F(u)-F(v)|&\lesssim(|u|+|v|)^{\alpha}|u-v|,\\
    |F'(u) - F'(v)| &\lesssim {\begin{cases}
        (|u|+|v|)^{\alpha-1}|u-v|, &\alpha \geq 1,\\
        |u-v|^{\alpha}, &\alpha <1.
    \end{cases}}
\end{align}
Then, if $0<s<1$, $1\leq p < \infty$, $1<q,r<\infty$, with $1/p=1/q+1/r$, we have
\begin{equation}
    \|D^s(F(u)-F(v))\|_{p} \lesssim \begin{cases}
    (\|u\|_{L^{\alpha q}}+\|v\|_{L^{\alpha  q}})^{\alpha}\|D^s(u-v)\|_{L^r} \\\quad\quad+  (\|D^s u\|_{L^r}+\|D^s v\|_{L^r})( \|u\|_{L^{\alpha q}}
    +\|v\|_{L^{\alpha  q}})^{\alpha-1}\|u-v\|_{L^{\alpha q}}, \quad&\alpha \geq 1\\
    (\|u\|_{L^{\alpha q}}+\|v\|_{L^{\alpha  q}})^{\alpha}\|D^s(u-v)\|_{L^r} 
    \\\quad\quad+  (\|D^s u\|_{L^r}+\|D^s v\|_{L^r})\|u-v\|_{L^{\alpha q}}^\alpha, \quad&\alpha < 1
    \end{cases}
\end{equation}
    
\end{lem}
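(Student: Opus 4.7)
The plan is to start from the integral representation
\begin{equation}
F(u)-F(v) = \int_0^1 \bigl[F_z(u_\theta)(u-v) + F_{\bar z}(u_\theta)(\overline{u-v})\bigr]\,d\theta, \qquad u_\theta := v + \theta(u-v),
\end{equation}
apply the Kenig--Ponce--Vega fractional Leibniz rule to each of the two summands, and bring $D^s$ inside the $\theta$-integral via Minkowski's inequality. This reduces matters to controlling, uniformly in $\theta\in[0,1]$, a sum of the form $\|F'(u_\theta)\|_{L^{A}}\|D^s(u-v)\|_{L^r} + \|D^s F'(u_\theta)\|_{L^{B}}\|u-v\|_{L^{C}}$ for suitable Lebesgue exponents. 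The pointwise bound $|F'(w)|\lesssim |w|^\alpha$ combined with Hölder's inequality immediately produces the factor $(\|u\|_{L^{\alpha q}}+\|v\|_{L^{\alpha q}})^\alpha \|D^s(u-v)\|_{L^r}$ appearing in both branches of the conclusion.

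For the Lipschitz regime $\alpha\ge 1$, the hypothesis $|F'(u)-F'(v)|\lesssim(|u|+|v|)^{\alpha-1}|u-v|$ fits exactly into Lemma \ref{chain_2} with majorant $G(w)=(|u|+|v|)^{\alpha-1}$. Applying it to $F'(u_\theta)$ and using one more Hölder bounds $\|D^s F'(u_\theta)\|_{L^B}$ by $(\|u\|_{L^{\alpha q}}+\|v\|_{L^{\alpha q}})^{\alpha-1}(\|D^s u\|_{L^r}+\|D^s v\|_{L^r})$; multiplying by $\|u-v\|_{L^{\alpha q}}$ then recovers the first line of the statement.

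The Hölder regime $\alpha<1$ is the genuinely delicate case and the main obstacle. A direct application of Visan's Lemma \ref{chain_3} to $F'(u_\theta)$ produces $D^\sigma u_\theta$ with $\sigma>s/\alpha$, which does not match the target $D^s$ norms, and it gives no natural explanation for the anomalous factor $\|u-v\|_{L^{\alpha q}}^\alpha$ present in the statement. The workaround I propose is to peel off the linear Taylor term by writing $F(u)-F(v) = F_z(v)(u-v)+F_{\bar z}(v)(\overline{u-v})+R(u,v)$, where the remainder satisfies the refined pointwise bound $|R(u,v)|\lesssim |u-v|^{1+\alpha}$ as a consequence of the $\alpha$-Hölder hypothesis on $F'$. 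The linear piece is handled by the standard Leibniz rule together with the chain rule applied to $F'(v)$, while for $R$ one exploits its $(1+\alpha)$-Hölder character in the variable $u-v$ to feed into Lemma \ref{chain_3} and generate the factor $\|u-v\|_{L^{\alpha q}}^\alpha$ paired with $\|D^s u\|_{L^r}+\|D^s v\|_{L^r}$. Ensuring that the admissible exponent range in Lemma \ref{chain_3}, namely $s/\alpha<\sigma<1$ together with $(1-s/(\alpha\sigma))p_1>1$, is compatible with the Lebesgue norms on the right-hand side is the delicate bookkeeping that constitutes the core of Fujiwara's argument in \cite{F22_chain}.
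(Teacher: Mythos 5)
First, a point of comparison: the paper does not prove this lemma at all — it is quoted from Fujiwara \cite[Corollary 1.4]{F22_chain} — so the only meaningful check is whether your argument is correct on its own. Your $\alpha\ge 1$ branch is: the integral representation, Minkowski, the Kenig--Ponce--Vega Leibniz rule, and Lemma \ref{chain_2} applied to $F'(u_\theta)$ with the choices $1/B=1/p-1/(\alpha q)$, $p_1=\alpha q/(\alpha-1)$, $p_2=r$ reproduce both terms of the stated bound (modulo the harmless endpoint $p_1=\infty$ when $\alpha=1$), and this is the standard route.

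The $\alpha<1$ branch, however, has a genuine gap. After peeling off the linear Taylor term, the Leibniz rule applied to $F_z(v)(u-v)$ leaves you with $\|D^sF_z(v)\|_{L^B}\|u-v\|_{L^C}$, and this term cannot be closed: $F_z$ is only $\alpha$-H\"older, so Lemma \ref{chain_3} requires $s<\alpha$, which may fail (e.g. $s=0.9$, $\alpha=0.5$), and even when it applies it outputs $\|D^\sigma v\|$ with $\sigma>s/\alpha>s$, a norm carrying strictly more than $s$ derivatives of $v$, which neither appears on nor can be recovered from the right-hand side of the claimed estimate. The remainder is equally problematic: $R(u,v)$ is a function of the pair $(v,u-v)$, not a composition of a single H\"older function with $u-v$, so Lemma \ref{chain_3} does not apply to it, and the pointwise bound $|R|\lesssim|u-v|^{1+\alpha}$ says nothing about $D^sR$; increments of $R$ contain $|F'(v(x))-F'(v(y))|\,|u(x)-v(x)|\lesssim|v(x)-v(y)|^\alpha\,|u(x)-v(x)|$, i.e. again an $\alpha$-power of increments of $v$, morally $s/\alpha$ derivatives of $v$. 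This is exactly why the true estimate has the asymmetric form $(\|D^su\|_{L^r}+\|D^sv\|_{L^r})\|u-v\|_{L^{\alpha q}}^{\alpha}$: the H\"older exponent must land on $u-v$, never on $v$ alone, and the cited proof of Fujiwara is organized precisely so that $D^s$ is never placed on $F'(v)$, working instead with difference estimates of $F(u)-F(v)$ in which $F'$ is evaluated at arguments differing by $u-v$. Your closing sentence, which defers the ``delicate bookkeeping'' to \cite{F22_chain}, in effect defers the entire content of the case $\alpha<1$.
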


\begin{re} The conclusion of Lemma \ref{lem:diff_estim_Hs} still follows (with the same proof) if $F$ is only a complex, real-differentiable function, such that $F_z$ and $F_{\bar{z}}$ satisty the hypoteses for $F'$, i.e., if 

\begin{align}
    |F_z(u) - F_z(v)|+|F_{\bar{z}}(u) - F_{\bar{z}}(v)| &\lesssim {\begin{cases}
        (|u|+|v|)^{\alpha-1}|u-v|, &\alpha \geq 1,\\
        |u-v|^{\alpha}, &\alpha <1.
        \end{cases}}
\end{align}
In particular, one can choose $F(z) = |z|^{\alpha} z$.
\end{re}

%\end{red}
}

\subsection{Nonlinear estimates}

Now, we use the previous results to deduce suitable nonlinear estimates, which are used in the proof of the local well-posedness results. The main challenge lies in achieving the continuous dependence for small values of $\alpha$ without any loss of regularity (see for instance \cite{CFH11, FH13}). To that end, it is important to estimate the difference of two nonlinear terms at the $H^s$ level. In the case $\alpha\geq 1$ we obtain a Lipschitz bound (see \eqref{dual_grad_s_1}, \eqref{dual_grad_s_3}, \eqref{dual_s_5}) while, for $\alpha< 1$, there is an extra H\"older term which fortunately can be controlled at lower regularity (see \eqref{dual_grad_s_2}, \eqref{dual_grad_s_4}, \eqref{dual_s_6}).
\medskip

Throughout this section, we assume
\begin{equation}\label{hipot}
N \geq 1,\quad s\geq 0,\quad 0 < \alpha <
\begin{cases}
	\frac{4-2b}{N-2s},& s<N/2,\\
	\infty, &s\geq N/2,
\end{cases} \quad \mbox{and } 0 < b <\min\{N/2+1-s,N-s,2\}
\end{equation}
and abbreviate $F(x,u) = |x|^{-b} |u|^ {\alpha}u$ and $F(x,u,v) = |x|^{-b} |u|^ {\alpha}v$.

\begin{lem}
\label{lem_guz} 
Under assumption \eqref{hipot}, if $\alpha$ is an even integer or $s< \alpha+1$, there exists $0<\theta\leq 1$ such that
\begin{align}
\noeqref{dual_l}\label{dual_l}\left\|F(x,u,v)\right\|_{S'\left(L^2, I\right)}&\lesssim|I|^{\frac{\theta\alpha(s-s_c)}{2}}\left\|u\right\|^{\alpha}_{S^s\left(L^2, I\right)}\left\|v\right\|_{S\left(L^2, I\right)},\\
\label{dual_grad_l}\left\|D^sF(x,u)\right\|_{S'\left(L^2, I\right)}
&\lesssim |I|^{\frac{\theta\alpha(s-s_c)}{2}}\left\| u\right\|_{S^s\left(L^2, I\right)}^{{\alpha+1}}.
\end{align}
If $s\le 1$ and $\alpha \geq 1,$
\begin{equation}
\label{dual_grad_s_1}\left\|D^s\left(F(x,u)-F(x,v)\right)\right\|_{S'\left(L^2, I\right)}
\lesssim
\left[|I|^{\frac{\theta(s-s_c)}{2}}\left(\left\|u\right\|_{S^s\left(L^2, I\right)}+\left\|v\right\|_{S^s\left(L^2, I\right)}\right)\right]^{\alpha }\| u-v\|_{S^s(L^2,I)}. 
\end{equation}
If $s\le 1$ and $\alpha < 1,$ there exists $0<\eta<1$ such that
\begin{align}
\label{dual_grad_s_2}\left\|D^s\left(F(x,u)-F(x,v)\right)\right\|_{S'\left(L^2, I\right)}
&\lesssim 
\left[|I|^{\frac{\theta(s-s_c)}{2}}\left(\left\|u\right\|_{S^s\left(L^2, I\right)}+\left\|v\right\|_{S^s\left(L^2, I\right)}\right)\right]^{\alpha }\|u-v\|_{S^s(L^2,I)}\\
&\quad
+\left[|I|^{\frac{\theta(s-s_c)}{2\eta}}\|u-v\|_{S(L^2,I)}\right]^{\eta\alpha }\left(\left\|u\right\|_{S^s\left(L^2, I\right)}+\left\|v\right\|_{S^s\left(L^2, I\right)}\right)^{1+(1-\eta)\alpha}.
\end{align}
\end{lem}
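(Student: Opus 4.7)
The plan is to bound each dual Strichartz norm by choosing appropriate pairs $(q,r)\in\mathcal{A}_s$ and $(\tilde q,\tilde r)\in\mathcal{A}_{-s}$, applying Hölder's inequality in space-time to decouple $|x|^{-b}$, the factor $|u|^\alpha$ (or its analog for differences), and the remaining $u$, $v$, or $D^s u$, and then splitting the weight $|x|^{-b}$ into the two Lebesgue scales $L^{N/(b+\eta)}(B_1)$ and $L^{N/(b-\eta)}(B_1^c)$ as in the proof of Lemma~\ref{basic_interp}. The positive power of $|I|$ is harvested by comparing the temporal exponent of the chosen Strichartz pair with the time exponent appearing after Hölder in time; since the nonlinearity carries $\alpha$ factors, this gain is proportional to $\alpha(s-s_c)$, which is strictly positive in the subcritical regime.

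For \eqref{dual_l}, I would fix an $L^2$-admissible pair and, after applying Hölder in space with three factors ($|x|^{-b}$, $|u|^\alpha$, and $v$), use Sobolev embedding $\dot W^{s,q_{\pm\eta}}\hookrightarrow L^{\cdot}$ to re-express $\|u\|_{L^\cdot}$ and $\|v\|_{L^\cdot}$ in terms of admissible norms, recognizing the resulting exponents as belonging to $\mathcal{A}_s$ and $\mathcal{A}_0$ thanks to \eqref{hipot}. For \eqref{dual_grad_l}, the idea is to write $D^s(|x|^{-b}|u|^\alpha u)$ via Corollary~\ref{gen_leib_2} with $A=B=B_1$. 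The ``derivative on $|x|^{-b}$'' terms are handled exactly as in Lemma~\ref{basic_interp}, while the ``derivative on $|u|^\alpha u$'' terms require the fractional chain rule: Lemma~\ref{chain_2} when $\alpha$ is an even integer or $\alpha\ge 1$, and Lemma~\ref{chain_3} when $0<\alpha<1$, where the additional assumption $s<\alpha+1$ ensures the Hölder-exponent condition $s<\alpha\sigma$ can be met with some $\sigma<1$.

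The difference estimates \eqref{dual_grad_s_1}--\eqref{dual_grad_s_2} are the delicate part. The plan is to apply Lemma~\ref{gen_leib_1} to split
\[
D^s\bigl(|x|^{-b}(F(u)-F(v))\bigr) = (D^s|x|^{-b})(F(u)-F(v)) + |x|^{-b}D^s(F(u)-F(v)) + R,
\]
with $R$ the error term from Lemma~\ref{gen_leib_1}. The two outer terms are controlled by the pointwise bound $|F(u)-F(v)|\lesssim(|u|+|v|)^\alpha|u-v|$ together with Hölder in space and the weight-splitting of Lemma~\ref{basic_interp}. The middle term is treated via Fujiwara's Lemma~\ref{lem:diff_estim_Hs} (applied to $F(z)=|z|^\alpha z$ through the remark following it): for $\alpha\ge 1$ it yields a clean Lipschitz bound in $H^s$, reproducing \eqref{dual_grad_s_1}, while for $\alpha<1$ the second branch of Lemma~\ref{lem:diff_estim_Hs} produces the extra Hölder term $\|u-v\|_{L^{\alpha q}}^\alpha$; plugging this into Strichartz and using Sobolev embedding $\dot{H}^s\hookrightarrow L^{\alpha q}$ combined with interpolation between $\|u-v\|_{S(L^2,I)}$ and $\|u-v\|_{S^s(L^2,I)}$ produces the exponent $\eta\alpha$ appearing in \eqref{dual_grad_s_2}.

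The main obstacle is the combinatorial bookkeeping of admissible exponents: for each pair $(q,r)$ chosen to realize the dual norm, one must verify that (i) the exponents $q_{\pm\eta}$ obtained from Lemma~\ref{basic_interp} correspond to elements of $\mathcal{A}_s$ or $\mathcal{A}_0$, (ii) the integrability conditions in Lemmas~\ref{chain_2}--\ref{chain_3} and \ref{lem:diff_estim_Hs} are satisfied, and (iii) the resulting temporal exponents leave a positive gain $|I|^{\theta\alpha(s-s_c)/2}$. These three constraints together are what force the precise range $0<b<\min\{2, N-s, N/2+1-s\}$ in \eqref{hipot}, so the sharpness of the hypotheses is essentially dictated by this compatibility check rather than by any single inequality.
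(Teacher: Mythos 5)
Your overall strategy coincides with the paper's: fix a single dual admissible pair, split the weight into the two scales $|x|^{-b\mp\eta}$ near/far from the origin (this is exactly what Lemma \ref{basic_interp} encapsulates), apply the fractional chain rules to $|u|^{\alpha}u$, use Fujiwara's Lemma \ref{lem:diff_estim_Hs} for the difference estimates, and recover the Hölder term in \eqref{dual_grad_s_2} by interpolating between $\|u-v\|_{S(L^2,I)}$ and the $D^s$-level norm. The paper does all of this, only packaged through Lemma \ref{basic_interp} rather than through your explicit three-term splitting via Lemma \ref{gen_leib_1}, so the route is essentially the same.

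There is, however, one concrete gap: your sketch gives no mechanism for the regime $s\ge N/2$, which is precisely where the exponent $\theta$ in the statement comes from. If you insist on using all $s$ derivatives in the Sobolev step, the target exponent $\frac1{r_2}=\frac1{r_3}-\frac{s}{N}$ is nonpositive for $L^2$-admissible $r_3$ once $s\ge N/2$ (and $\mathcal{A}_s$ is not even defined for $s>1$), so the scaling-sharp Hölder/Sobolev bookkeeping you describe cannot close there. The paper's fix is to work with an auxiliary regularity $\tilde s$, with $s_c<\tilde s<N/2$ when $s\ge N/2$ (and $\tilde s=(1-\eta)s$ in the $\alpha<1$ difference estimate), use the embedding $W^{s,r_3}\hookrightarrow \dot W^{\tilde s,r_3}$, and then define $\theta$ by $\theta(s-s_c)=\tilde s-s_c$; without this device the gain $|I|^{\theta\alpha(s-s_c)/2}$ is not obtained in the full range of \eqref{hipot}. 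A smaller imprecision: Lemma \ref{chain_3} is not applied to $F(z)=|z|^{\alpha}z$ itself (which satisfies the Lipschitz-type hypothesis of Lemma \ref{chain_2} for every $\alpha>0$ when $s<1$); it is needed for $F'$ after writing $D^s\sim D^{s-1}\nabla$ when $s>1$, and the relevant condition is $s-1<\alpha\sigma$, which is exactly where $s<\alpha+1$ enters --- your stated condition ``$s<\alpha\sigma$'' would wrongly exclude most of the admissible range.
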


%
% \begin{itemize}
%     \item If $s_c= s = 1$, $N\geq 3$, for $0<\epsilon\ll\min\{\alpha,1\}$ and some $0<\theta<1$,
% \end{itemize}

% \begin{align}
% \label{dual_grad_s}\left\|D^{\epsilon}(|x|^{-b}|u|^{\alpha}v)\right\|_{S'(\dot{H}^{-(1-\epsilon)},I)}&\lesssim \left[\left\|D^\epsilon u\right\|_{S(\dot{H}^{1-\epsilon},I)}^{1-\theta}\|\nabla u\|_{S(L^2,I)}^{\theta}\right]^{\alpha}\left\|D^{\epsilon}v\right\|_{S(\dot{H}^{1-\epsilon},I)},\\
% %
% %
% %
% %
% \label{dual_s3}\left\|\nabla(|x|^{-b}|u|^{\alpha}u)\right\|_{S'(L^2,I)}&\lesssim\left\|D^\epsilon u\right\|^{\alpha}_{S(\dot{H}^{1-\epsilon},I)}\left\|\nabla u\right\|_{S(L^2,I)}.\\
% \end{align}

\begin{proof}
Throughout the proof, we let $\eta=\eta(N,s,b,\alpha)>0$ be sufficiently small and define precisely $r_i^\pm$ and $q_i^\pm$ as to satisfy $\frac{1}{r_i^\pm}=\frac{1}{r_i}\pm\frac{\eta}{N}$ and $\frac{1}{q_i^\pm}=\frac{1}{q_i}\pm\frac{\eta}{2}$, respectively. We start with the proof of the first two inequalities. To prove \eqref{dual_l}, let $(\gamma,\rho)\in\mathcal{A}_0$ and $\tilde s \leq s$ be such that $\tilde{s} = s$ if $s < N/2$ and  $s_c< \tilde s < N/2$ if $s \geq N/2$. We define the relations
\begin{align}\label{rel1}
    \frac{1}{\rho'} &= \frac{b\mp \eta}{N}+\frac{1}{r_{1}^\pm},\quad \frac{1}{r_{1}} = \frac{\alpha}{r_{2}}+\frac{1}{r_3},\quad \frac{1}{r_{2}}=\frac{1}{r_{3}}-\frac{\tilde s}{N}, \quad \frac{2}{q_3}=\frac{N}{2}-\frac{N}{r_3},
\end{align}
which implies
$$
\frac{1}{\gamma'}=\frac{\alpha(\tilde s - s_c)}{2}+\frac{\alpha+1}{q_3}.
$$
Then use Lemma \ref{basic_interp}, H\"older and Sobolev to write
\begin{align}
    \left\||x|^{-b}|u|^{\alpha}v\right\|_{L^{\gamma'}_tL^{\rho'}_x} &\lesssim \left\|\left[\||u|^{\alpha}v\|_{L^{r_{1}^+}_x}\||u|^{\alpha}v\|_{L^{r_{1}^-}_x}\right]^{\frac{1}{2}}\right\|_{L^{\gamma'}_t}\\
    \label{middle_D_0} &\lesssim 
    |I|^{\frac{\alpha(\tilde{s}-s_c)}{2}}\|u\|^{\alpha}_{L^{q_{3}}_tL^{r_{2}}_x}\left[\|v\|_{L^{q_3^-}_tL^{r_3^+}_x}\|v\|_{L^{q_3^+}_tL^{r_3^-}_x}\right]^{\frac{1}{2}}\\
    &\lesssim  |I|^{\frac{\alpha(\tilde{s}-s_c)}{2}}\|D^{\tilde s}u\|^{\alpha}_{L^{q_{3}}_tL^{r_{3}}_x}\left[\|v\|_{L^{q_3^-}_tL^{r_3^+}_x}\|v\|_{L^{q_3^+}_tL^{r_3^-}_x}\right]^{\frac{1}{2}},
\end{align}
and, invoking also Lemmas \ref{chain_2} and \ref{chain_3} (see also \cite[Proposition 1.3]{F22_chain} for a direct proof in the case $s>1$), 
\begin{align}
    \left\|D^s(|x|^{-b}|u|^{\alpha}u)\right\|_{L^{\gamma'}_tL^{\rho'}_x} &\lesssim \left\|\left[\|D^s(|u|^{\alpha} u)\|_{L^{r_{1}^+}_x}\|D^s(|u|^{\alpha} u)\|_{L^{r_{1}^-}_x}\right]^{\frac{1}{2}}\right\|_{L^{\gamma'}_t}\\
    &\lesssim \label{middle_D_s}
    |I|^{\frac{\alpha(\tilde s-s_c)}{2}}\|u\|_{L^{q_{3}}_tL^{r_{2}}_x}^{\alpha} \left[\|D^s u\|_{L^{q_3^-}_tL^{r_3^+}_x}\|D^s u\|_{L^{q_3^+}_tL^{r_3^-}_x}\right]^{\frac{1}{2}}\\
    &\lesssim 
    |I|^{\frac{\alpha(\tilde s-s_c)}{2}}\|D^{\tilde s}u\|_{L^{q_{3}}_tL^{r_{3}}_x}^{\alpha}\left[\|D^s u\|_{L^{q_3^-}_tL^{r_3^+}_x}\|D^s u\|_{L^{q_3^+}_tL^{r_3^-}_x}\right]^{\frac{1}{2}}.
\end{align}
We then claim it is possible to have $(q_{3}^\pm,r_{3}^\mp),(q_3,r_3)\in \mathcal{A}_0$. Indeed, using the relations \eqref{rel1}, we have
$$
\frac{1}{\rho'}=\frac{b}{N}+\frac{1}{r_1}=\frac{b}{N}+\frac{\alpha+1}{r_3}-\frac{\alpha \tilde{s}}{N}=\frac{b}{N}+\frac{\alpha+1}{r_2}+\frac{\tilde{s}}{N},
$$
with the restrictions $1\leq r_1,r_2<\infty$ and $2<\rho, r_3<2^{\ast}$, where 
\begin{equation}\label{2ast}
2^{\ast}=
\begin{cases}
\infty,& N=1,2\\
\frac{2N}{N-2}, &N\geq 3.
\end{cases}
\end{equation}
Thus, in dimensions $N = 1,2$, such choice is possible if $\rho$ is chosen in the interval
\begin{equation}\label{restriction_rho_1}
    \max\left\{\frac{N}{N-b+\alpha \tilde s},\frac{N}{N-b-\tilde{s}},2\right\}<\rho<\frac{2N}{N-2b - \alpha(N-2 \tilde s)}.
\end{equation}
On the other hand, if $N \geq 3$, we have 
\begin{equation}\label{restriction_rho_2}
    \max\left\{\frac{2N}{N-2b+2+\alpha[2-(N-2\tilde{s})]},\frac{N}{N-b-\tilde{s}},2\right\}<\rho<\min\left\{\frac{2N}{N-2},\frac{2N}{N-2b - \alpha(N-2\tilde{s})}\right\}.
\end{equation}
In both cases, by the definition of $\tilde{s}$, the restriction on $\alpha$ and the fact that $b+s<N/2+1$, simple calculations imply that we can find such $\rho$. 

Now, by defining $0<\theta\leq 1$ as to satisfy
\begin{equation}
    \theta (s-s_c)= \tilde s - s_c,
\end{equation}
the estimates \eqref{dual_l} and \eqref{dual_grad_l} are proved by the embedding $W^{s,r_3} \hookrightarrow \dot{W}^{\tilde s, r_3}$.
% We prove \eqref{dual_grad_l_4} using the same spaces as in \eqref{middle_D_s}, namely:
% \begin{align}
%     \left\|D^{s_c}(|x|^{-b}|u|^{\alpha}u)\right\|_{L^{\gamma'}_tL^{\rho'}_x} &\lesssim \left\|\left[\|D^{s_c}(|u|^{\alpha} u)\|_{L^{r_{1}^+}_x}\|D^{s_c}(|u|^{\alpha} u)\|_{L^{r_{1}^-}_x}\right]^{\frac{1}{2}}\right\|_{L^{\gamma'}_t}\\
%     &\lesssim \label{middle_D_sc}
%     |I|^{\frac{\alpha( s-s_c)}{2}}\left[\|u\|_{L^{q_{3}^-}_tL^{r_{2}^+}_x}^{\alpha}\|u\|_{L^{q_{3}^+}_tL^{r_{2}^-}_x}^{\alpha} \right]^{\frac{1}{2}}\|D^{s_c} u\|_{L^{q_3}_tL^{r_3}_x}\\
%     &\lesssim 
%     |I|^{\frac{\alpha( s-s_c)}{2}}\left[\|D^{ s}u\|_{L^{q_{3}^-}_tL^{r_{3}^+}_x}\|D^{ s}u\|_{L^{q_{3}^+}_tL^{r_{3}^-}_x}\right]^{\frac{1}{2}}\|D^{s_c} u\|_{L^{q_3}_tL^{r_3}_x}.
% \end{align}
{
The proof of \eqref{dual_grad_s_1} and \eqref{dual_grad_s_2} can be done similarly, with the use of Lemma \ref{basic_interp} and \ref{lem:diff_estim_Hs}. In the case $\alpha \geq 1$ and $s\leq 1$, with the same choice of pairs as in \eqref{middle_D_s}, we first apply Lemma \ref{basic_interp} to find
\begin{equation}\label{eq:bound_interm}
\left\|D^s(|x|^{-b}|u|^{\alpha}u- |x|^{-b}|v|^{\alpha}v)\right\|_{L^{\gamma'}_tL^{\rho'}_x} \lesssim \left\|\left[\|D^s(|u|^{\alpha} u-|v|^{\alpha}v)\|_{L^{r_{1}^+}_x}\|D^s(|u|^{\alpha} u-|v|^{\alpha}v)\|_{L^{r_{1}^-}_x}\right]^{\frac{1}{2}}\right\|_{L^{\gamma'}_t}
\end{equation}
Now, using Lemma \ref{lem:diff_estim_Hs} and Hölder inequality, the r.h.s. can be bounded by $A + B + C + D$, where
\begin{align}
	A&= \label{middle_D_s_diff}
	|I|^{\frac{\alpha(\tilde s-s_c)}{2}}\Bigg[\left(\|u\|_{L^{q_{3}}_tL^{r_{2}}_x}+\|v\|_{L^{q_{3}}_tL^{r_{2}}_x}\right)^{\alpha} \left[\|D^s (u-v)\|_{L^{q_3^-}_tL^{r_3^+}_x}\|D^s( u-v)\|_{L^{q_3^+}_tL^{r_3^-}_x}\right]^{\frac{1}{2}} \\ &\lesssim    |I|^{\frac{\alpha(\tilde s-s_c)}{2}}\left(\|D^{\tilde s}u\|_{L^{q_{3}}_tL^{r_{3}}_x}+\|D^{\tilde s}v\|_{L^{q_{3}}_tL^{r_{3}}_x}\right)^{\alpha}\left[\|D^s (u-v)\|_{L^{q_3^-}_tL^{r_3^+}_x}\|D^s (u-v)\|_{L^{q_3^+}_tL^{r_3^-}_x}\right]^{\frac{1}{2}},
\end{align}
\begin{align}
	B&=|I|^{\frac{\alpha(\tilde s-s_c)}{2}}\left(\|u\|_{L^{q_{3}}_tL^{r_{2}}_x}+\|v\|_{L^{q_{3}}_tL^{r_{2}}_x}\right)^{\alpha-1}\left(\|D^s u\|_{L^{q_3}_tL^{r_3}_x}+\|D^s v\|_{L^{q_3}_tL^{r_3}_x}\right) \\&\qquad\times\left[\|u-v\|_{L^{q_3^-}_tL^{r_2^+}_x}\| u-v\|_{L^{q_3^+}_tL^{r_2^-}_x}\right]^{\frac{1}{2}}\\
	&\lesssim |I|^{\frac{\alpha(\tilde s-s_c)}{2}}\left(\|D^{\tilde s}u\|_{L^{q_{3}}_tL^{r_{3}}_x}+\|D^{\tilde s}v\|_{L^{q_{3}}_tL^{r_{3}}_x}\right)^{\alpha-1}\left(\|D^s u\|_{L^{q_3}_tL^{r_3}_x}+\|D^s v\|_{L^{q_3}_tL^{r_3}_x}\right)\\
	&\qquad\times\left[\|D^{\tilde{s}} (u-v)\|_{L^{q_3^-}_tL^{r_3^+}_x}\|D^{\tilde{s}} (u-v)\|_{L^{q_3^+}_tL^{r_3^-}_x}\right]^{\frac{1}{2}},
\end{align}
\begin{align}
	C&= |I|^{\frac{\alpha(\tilde s-s_c)}{2}}\left(\|u\|_{L^{q_{3}}_tL^{r_{2}}_x}+\|v\|_{L^{q_{3}}_tL^{r_{2}}_x}\right)^{\frac{2\alpha-1}{2}}\left(\|D^s u\|_{L^{q_3}_tL^{r_3}_x}+\|D^s v\|_{L^{q_3}_tL^{r_3}_x}\right)^{\frac12} \\&\qquad\times \left[\|D^s (u-v)\|_{L^{q_3^-}_tL^{r_3^+}_x}\| u-v\|_{L^{q_3^+}_tL^{r_2^-}_x}\right]^{\frac{1}{2}}\\
	&\lesssim |I|^{\frac{\alpha(\tilde s-s_c)}{2}}\left(\|D^{\tilde s}u\|_{L^{q_{3}}_tL^{r_{3}}_x}+\| D^{\tilde s}v\|_{L^{q_{3}}_tL^{r_{3}}_x}\right)^{\frac{2\alpha-1}{2}}\left(\|D^s u\|_{L^{q_3}_tL^{r_3}_x}+\|D^s v\|_{L^{q_3}_tL^{r_3}_x}\right)^{\frac12}\\
	&\qquad\times\left[\|D^s (u-v)\|_{L^{q_3^-}_tL^{r_3^+}_x}\| D^{\tilde s}(u-v)\|_{L^{q_3^+}_tL^{r_3^-}_x}\right]^{\frac{1}{2}}
\end{align}
and
\begin{align}
D&= |I|^{\frac{\alpha(\tilde s-s_c)}{2}} |\left(\|u\|_{L^{q_{3}}_tL^{r_{2}}_x}+\|v\|_{L^{q_{3}}_tL^{r_{2}}_x}\right)^{\frac{2\alpha-1}{2}}\left(\|D^s u\|_{L^{q_3}_tL^{r_3}_x}+\|D^s v\|_{L^{q_3}_tL^{r_3}_x}\right)^{\frac12} \\&\qquad\times \left[\|D^s (u-v)\|_{L^{q_3^+}_tL^{r_3^-}_x}\| u-v\|_{L^{q_3^-}_tL^{r_2^+}_x}\right]^{\frac{1}{2}}\\
    &\lesssim 
    |I|^{\frac{\alpha(\tilde s-s_c)}{2}} \left(\|D^{\tilde s}u\|_{L^{q_{3}}_tL^{r_{3}}_x}+\|D^{\tilde s}v\|_{L^{q_{3}}_tL^{r_{3}}_x}\right)^{\frac{2\alpha-1}{2}}\left(\|D^s u\|_{L^{q_3}_tL^{r_3}_x}+\|D^s v\|_{L^{q_3}_tL^{r_3}_x}\right)^{\frac12} \\
    &\qquad\times\left[\|D^s (u-v)\|_{L^{q_3^+}_tL^{r_3^-}_x}\| D^{\tilde s}(u-v)\|_{L^{q_3^-}_tL^{r_2^+}_x}\right]^{\frac{1}{2}},
\end{align}
from which \eqref{dual_grad_s_1} follows. 
In the case $\alpha < 1$, $0 \leq s \leq 1$, we slightly change the choice of $\tilde s$, defining it as $\tilde s = (1 -\eta)s$, with $0<\eta\ll 1$ in the case $s< N/2$. We can bound \eqref{eq:bound_interm} by 
\begin{align}
   \label{middle_D_s_diff_2}
    &|I|^{\frac{\alpha(\tilde s-s_c)}{2}}\Bigg[\left(\|u\|_{L^{q_{3}}_tL^{r_{2}}_x}+\|v\|_{L^{q_{3}}_tL^{r_{2}}_x}\right)^{\alpha} \left[\|D^s (u-v)\|_{L^{q_3^-}_tL^{r_3^+}_x}\|D^s( u-v)\|_{L^{q_3^+}_tL^{r_3^-}_x}\right]^{\frac{1}{2}}\\
    +\ &
    \| u-v\|_{L^{q_{3}}_tL^{r_{2}}_x}^{\alpha}\left[ \left(\|D^s u\|_{L^{q_3^-}_tL^{r_3^+}_x}\|D^s u\|_{L^{q_3^+}_tL^{r_3^-}_x}\right)^{\frac{1}{2}}+\left(\|D^s v\|_{L^{q_3^-}_tL^{r_3^+}_x}\|D^s v\|_{L^{q_3^+}_tL^{r_3^-}_x}\right)^{\frac{1}{2}}\right]\\
    +\ &
\left(\|u\|_{L^{q_{3}}_tL^{r_{2}}_x}+\|v\|_{L^{q_{3}}_tL^{r_{2}}_x}\right)^{\frac{\alpha}{2}} \| u-v\|_{L^{q_{3}}_tL^{r_{2}}_x}^{\frac{\alpha}{2}}\|D^s (u-v)\|_{L^{q_3^-}_tL^{r_3^+}_x}^{\frac{1}{2}}\left( \|D^s u\|_{L^{q_3^-}_tL^{r_3^+}_x}+\|D^s v\|_{L^{q_3^-}_tL^{r_3^+}_x}\right)^{\frac{1}{2}}\\
    +\ &
   \left(\|u\|_{L^{q_{3}}_tL^{r_{2}}_x}+\|v\|_{L^{q_{3}}_tL^{r_{2}}_x}\right)^{\frac{\alpha}{2}} \| u-v\|_{L^{q_{3}}_tL^{r_{2}}_x}^{\frac{\alpha}{2}}\|D^s (u-v)\|_{L^{q_3^+}_tL^{r_3^-}_x}^{\frac{1}{2}}\left( \|D^s u\|_{L^{q_3^+}_tL^{r_3^-}_x}+\|D^s v\|_{L^{q_3^+}_tL^{r_3^-}_x}\right)^{\frac{1}{2}}\Bigg]
%\end{align}
%\begin{align}
    \\\lesssim\  &
    |I|^{\frac{\alpha(\tilde s-s_c)}{2}}\Bigg[\left(\|D^{\tilde s}u\|_{L^{q_{3}}_tL^{r_{3}}_x}+\|D^{\tilde s}v\|_{L^{q_{3}}_tL^{r_{3}}_x}\right)^{\alpha} \left(\|D^s (u-v)\|_{L^{q_3^-}_tL^{r_3^+}_x}+\|D^s( u-v)\|_{L^{q_3^+}_tL^{r_3^-}_x}\right)\\
     &\qquad+
   \|u-v\|_{L^{q_{3}}_tL^{r_{3}}_x}^{\eta\alpha}\|D^s(u-v)\|_{L^{q_{3}}_tL^{r_{3}}_x}^{(1-\eta)\alpha}\\&\qquad\qquad \times\left( \|D^s u\|_{L^{q_3^-}_tL^{r_3^+}_x}+\|D^s u\|_{L^{q_3^+}_tL^{r_3^-}_x}+\|D^s v\|_{L^{q_3^-}_tL^{r_3^+}_x}+\|D^s v\|_{L^{q_3^+}_tL^{r_3^-}_x}\right)\Bigg]
   \\\lesssim\  &
    |I|^{\frac{\alpha(\tilde s-s_c)}{2}}\Bigg[\left(\|D^{\tilde s}u\|_{L^{q_{3}}_tL^{r_{3}}_x}+\|D^{\tilde s}v\|_{L^{q_{3}}_tL^{r_{3}}_x}\right)^{\alpha} \left(\|D^s (u-v)\|_{L^{q_3^-}_tL^{r_3^+}_x}+\|D^s( u-v)\|_{L^{q_3^+}_tL^{r_3^-}_x}\right)\\
     & \qquad+
   \|u-v\|_{L^{q_{3}}_tL^{r_{3}}_x}^{\eta\alpha}\left(\|D^s u\|_{L^{q_{3}}_tL^{r_{3}}_x}^{(1-\eta)\alpha}+\|D^s u\|_{L^{q_{3}}_tL^{r_{3}}_x}^{(1-\eta)\alpha}\right)\\ & \qquad \qquad\times\left( \|D^s u\|_{L^{q_3^-}_tL^{r_3^+}_x}+\|D^s u\|_{L^{q_3^+}_tL^{r_3^-}_x}+\|D^s v\|_{L^{q_3^-}_tL^{r_3^+}_x}+\|D^s v\|_{L^{q_3^+}_tL^{r_3^-}_x}\right)\Bigg],
    \end{align}
from which estimate \eqref{dual_grad_s_2} follows.
% Now, if $s\geq 1$, since $s < \alpha+1$ and $\gamma \in \mathbb Z_{\geq 0}^N$ is a multi-index such that $|\gamma|\leq \lfloor s \rfloor$,

% \begin{equation}
%     \partial^{\gamma}(|u|^{\alpha}u) - \partial^{\gamma}(|v|^{\alpha} v) = \int_0^1 F_z(u+\eta (v-u)) (u-v) + F_{\bar{z}}(u_\eta(v-u)) (\bar{u}-\bar{v}) \, d\eta
%     % = \sum_{1 \leq j+k\leq \lfloor{\alpha}\rfloor} F_{j,k}(u) (u-v)^j(\bar{u}-\bar{v})^k + R(u + \eta (v-u)),
% \end{equation}

% where $|F_{j,k}(u)| \lesssim |u|^{\alpha+1-j-k} $

}

\end{proof}

\begin{lem}
	\label{lem_guz2} 
	Under assumption \eqref{hipot}, if $0 \leq s_c< s \leq 1$, $s< \frac{N}{2}$, then
	\begin{align}
		\label{dual_grad_l_3}\left\|D^sF(x,u)\right\|_{S'\left(L^2, I\right)}
		&\lesssim \left[|I|^{\frac{s-s_c}{2}}\left\|u\right\|_{\dot{S}^s\left(L^2, I\right)}\right]^{\alpha }\| u\|_{\dot{S}^s(L^2,I)}.
	\end{align}
	Assuming that $\alpha\geq 1$ when $N=1$, there exists $0<{s_0}\leq\min\{\alpha,s\}$ such that 
	\begin{align}\label{dual_grad_l_2}
		\left\|D^{s_0}F(x,u,v)\right\|_{S'\left(\dot{H}^{-(s-s_0)}, I\right)}
		\lesssim&\left[|I|^{\frac{s-s_c}{2}}\left\|u\right\|_{\dot{S}^s\left(L^2, I\right)}\right]^{\alpha }\|D^{s_0} v\|_{S(\dot{H}^{s-s_0},I)}.
	\end{align}
	Furthermore, in the case  $\alpha \geq 1,$
	\begin{equation}
		\label{dual_grad_s_3}\left\|D^s\left(F(x,u)-F(x,v)\right)\right\|_{S'\left(L^2, I\right)}
		\lesssim \left[|I|^{\frac{s-s_c}{2}}\left(\left\|u\right\|_{\dot{S}^s\left(L^2, I\right)}+\left\|v\right\|_{\dot{S}^s\left(L^2, I\right)}\right)\right]^{\alpha }\|u-v\|_{\dot{S}^ s(L^2,I)}, 
	\end{equation}
	and in the case $\alpha < 1$,
	\begin{align}
		\label{dual_grad_s_4}\left\|D^s\left(F(x,u)-F(x,v)\right)\right\|_{S'\left(L^2, I\right)}
		&\lesssim 
		\left[|I|^{\frac{s-s_c}{2}}\left(\left\|u\right\|_{\dot{S}^s\left(L^2, I\right)}+\left\|v\right\|_{\dot{S}^s\left(L^2, I\right)}\right)\right]^{\alpha }\|u-v\|_{\dot{S}^s(L^2,I)}\\
		&\quad+
		\left[|I|^{\frac{s-s_c}{2}}\|D^{s_0}(u-v)\|_{S(\dot{H}^{s-s_0},I)}\right]^{\alpha }\left(\left\|u\right\|_{\dot{S}^s\left(L^2, I\right)}+\left\|v\right\|_{\dot{S}^s\left(L^2, I\right)}\right).
	\end{align}
\end{lem}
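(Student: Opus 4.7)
The strategy mirrors the proof of Lemma \ref{lem_guz}: decompose $D^s(|x|^{-b}|u|^\alpha u)$ via the generalized Leibniz estimate of Lemma \ref{basic_interp}, apply the fractional chain rules of Lemmas \ref{chain_2}–\ref{chain_3} to the nonlinearity $|u|^\alpha u$, and close by Sobolev embedding inside the homogeneous scale. The main structural change relative to Lemma \ref{lem_guz} is that there is no room to choose an auxiliary regularity $\tilde s<s$: every derivative has to be placed at the level $s$, and the factor $|I|^{\frac{\alpha(s-s_c)}{2}}$ emerges from directly comparing the nonlinear admissibility condition with the linear one.

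For \eqref{dual_grad_l_3}, I pick $(\gamma,\rho)\in\mathcal{A}_0$ together with $(q_3,r_3)\in \mathcal{A}_0$, and set auxiliary exponents $r_1,r_2$ satisfying
\[
\frac{1}{\rho'}=\frac{b\mp\eta}{N}+\frac{1}{r_1^\pm},\qquad \frac{1}{r_1}=\frac{\alpha}{r_2}+\frac{1}{r_3},\qquad \frac{1}{r_2}=\frac{1}{r_3}-\frac{s}{N}.
\]
After Lemma \ref{basic_interp} on the $L^{\rho'}_x$ norm and a chain rule on $|u|^\alpha u$, Sobolev embedding $\dot W^{s,r_3}\hookrightarrow L^{r_2}$ closes the bound. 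One must check that a $\rho$ lying in the homogeneous analogues of the windows \eqref{restriction_rho_1}–\eqref{restriction_rho_2} exists; the hypotheses $s<N/2$, $s_c<s$ and $b+s<N/2+1$ in \eqref{hipot} guarantee this, and the supplementary condition $\alpha\geq 1$ in dimension $N=1$ keeps the window non-degenerate.

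For \eqref{dual_grad_l_2} I use instead a dual pair $(\gamma,\rho)\in\mathcal{A}_{-(s-s_0)}$ together with a pair in $\mathcal{A}_{s-s_0}$ to measure $D^{s_0}v$. The choice $0<s_0\leq \min\{\alpha,s\}$ ensures that Lemma \ref{chain_3} (applicable since the nonlinearity $|u|^\alpha u$ is Hölder of order $\alpha\geq s_0$) distributes exactly $s_0$ derivatives onto $v$ and the remaining weight $\|u\|^\alpha$ is at Sobolev level $s$. The difference estimates \eqref{dual_grad_s_3}–\eqref{dual_grad_s_4} follow by substituting Lemma \ref{lem:diff_estim_Hs} (with the complex-valued remark after it) for the plain chain rule inside the decomposition used for \eqref{dual_grad_l_3}, precisely as in the passage from \eqref{dual_grad_l} to \eqref{dual_grad_s_1}–\eqref{dual_grad_s_2}. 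For $\alpha\geq 1$ the Lipschitz branch yields \eqref{dual_grad_s_3} at once; for $\alpha<1$ the Hölder branch contributes the extra piece
\[
\bigl(\|D^s u\|+\|D^s v\|\bigr)\,\|u-v\|^\alpha_{L^{\alpha q}_x},
\]
which is absorbed, via Sobolev and the previous step, into the mixed term $\|D^{s_0}(u-v)\|^\alpha_{S(\dot H^{s-s_0},I)}\,(\|u\|_{\dot S^s}+\|v\|_{\dot S^s})$ appearing in \eqref{dual_grad_s_4}.

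The main technical obstacle is simultaneous admissibility: once the weight $|x|^{-b}$ is split off through Lemma \ref{basic_interp}, one must place several Lebesgue exponents in the Foschi windows $\mathcal{A}_0$, $\mathcal{A}_{\pm(s-s_0)}$ while respecting both the homogeneous Sobolev embedding $\dot W^{s,r_3}\hookrightarrow L^{r_2}$ and the scaling identity $\tfrac{1}{\gamma'}=\tfrac{\alpha(s-s_c)}{2}+\tfrac{\alpha+1}{q_3}$. Because we cannot take $\tilde s<s$ here (we are genuinely at the top-regularity level), the window is thinnest precisely when $s=s_c$ and when $N=1$; this is exactly where the hypotheses $s_c\leq s\leq 1$, $s<N/2$ and $\alpha\geq 1$ for $N=1$ are used. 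For \eqref{dual_grad_l_2} the further constraint is that both $(\gamma,\rho)$ and the companion pair for $D^{s_0}v$ must sit within the Foschi ranges of $\mathcal A_{\pm(s-s_0)}$, which forces $s_0$ to be chosen small and controls its admissible range by $\min\{\alpha,s\}$.
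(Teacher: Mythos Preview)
Your sketch follows the paper's route: \eqref{dual_grad_l_3}, \eqref{dual_grad_s_3} and \eqref{dual_grad_s_4} are obtained by re-running the corresponding estimates of Lemma~\ref{lem_guz} with $\tilde s=s$ (possible because $s<N/2$), and \eqref{dual_grad_l_2} is handled via Lemma~\ref{basic_interp}, a Leibniz splitting, and Lemma~\ref{chain_3}. One clarification on \eqref{dual_grad_l_2}: it is the Leibniz rule (Lemma~\ref{gen_leib_1}) that separates $D^{s_0}v$ from $|u|^\alpha$, after which Lemma~\ref{chain_3} is applied to $D^{s_0}|u|^\alpha$ (not to $|u|^\alpha u$); the paper makes the explicit choice $s_0=(1-\eta)^2\alpha s$, $\sigma=(1-\eta)s$ when $\alpha<1$ and $s_0=s$ when $\alpha\geq1$, and the hypothesis $\alpha\geq1$ for $N=1$ is used only for \eqref{dual_grad_l_2}, not for \eqref{dual_grad_l_3}.
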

\begin{proof}
	 In the previous proof, since  $\tilde s = s$ if $s<N/2$, estimate \eqref{dual_grad_l_3} follows from the same computations as for \eqref{dual_grad_l}. Similarly, the proofs of \eqref{dual_grad_s_3} and \eqref{dual_grad_s_4} are completely analogous to those of \eqref{dual_grad_s_1} and \eqref{dual_grad_s_2}.
	 
	 It remains to prove \eqref{dual_grad_l_2}. We first consider the case $\alpha<1$ and $N\geq 2$. Define $\sigma=(1-\eta)s $ and $s_0 = (1-\eta)^2\alpha s$, so that $0<s_0<\alpha$ and $\frac{s_0}{\alpha}<\sigma<s\leq 1$. Let $(\gamma, \rho) \in \mathcal{A}_{-(s-s_0)}$, $(q_5^\mp,r_5^\pm)\in \mathcal{A}_{s-s_0}$, and $(q_8,r_8)\in \mathcal{A}_{0}$. From Lemmas \ref{basic_interp},  \ref{gen_leib_1} and \ref{chain_3} and Sobolev embedding, we have
	 \begin{align}\label{inequalities_s_c}
	 	\|D^{s_0}(|x|^{-b}|u|^\alpha v)\|_{ L^{\rho'}_x} &\lesssim \left[\|D^{s_0}(|u|^{\alpha}v)\|_{L^{r_1^+}_x}\|D^{s_0}(|u|^{\alpha}v)\|_{L^{r_1^-}_x}\right]^\frac{1}{2}\\
	 	&\lesssim \Bigg[\left(\|D^{s_0}(|u|^{\alpha})\|_{L^{r_2}_x}\|v\|_{L^{r_3^+}_x}+\||u|^{\alpha}\|_{L^{r_4}_x}\|D^{s_0}v\|_{L^{r_5^+}_x}\right)\times\\
	 	&\hspace{2cm}\times 
	 	\left(\|D^{s_0}(|u|^{\alpha})\|_{L^{r_2}_x}\|v\|_{L^{r_3^-}_x}+\||u|^{\alpha}\|_{L^{r_4}_x}\|D^{s_0}v\|_{L^{r_5^-}_x}\right)\Bigg]^\frac{1}{2}\\
	 	&\lesssim \left(\|u\|^{\alpha-\frac{s_0}{\sigma}}_{L^{\left(\alpha-\frac{s_0}{\sigma}\right)r_6}_x}\|D^\sigma u\|^{\frac{s_0}{\sigma}}_{L^{\frac{s_0}{\sigma} r_7}_x}+\|D^s u\|_{L^{r_8}_x}^{\alpha}\right)\left[\|D^{s_0}v\|_{L^{r_5^+}_x}\|D^{s_0}v\|_{L^{r_5^-}_x}\right]^{\frac{1}{2}}\\
	 	&\lesssim \|D^s u\|_{L^{r_8}_x}^{\alpha}\left[\|D^{s_0}v\|_{L^{r_5^+}_x}\|D^{s_0}v\|_{L^{r_5^-}_x}\right]^{\frac{1}{2}},
	 \end{align}
	 where
	 \begin{align}
	 	\frac{1}{\rho'} = \frac{b\mp \eta}{N} + \frac{1}{r_{1}^\pm},\quad 
	 	\frac{1}{r_1} &= \frac{1}{r_2}+\frac{1}{r_3}=\frac{1}{r_4}+\frac{1}{r_5},\quad
	 	\frac{1}{r_2}=\frac{1}{r_6}+\frac{1}{r_7}, \quad \left(\alpha-\frac{s_0}{\sigma}\right)r_6>1,\\
	 	\frac{1}{r_3} = \frac{1}{r_5}-\frac{s_0}{N}, \quad \frac{1}{r_8}&=\frac{1}{\alpha r_4}+\frac{s}{N}=\frac{1}{(\alpha-\frac{s_0}{\sigma})r_6}+\frac{s}{N}=\frac{1}{\frac{s_0}{\sigma}r_7}+\frac{s-\sigma}{N}.\label{reln=1}
	 \end{align}
	 We choose $\frac{1}{r_8}=\frac{1}{2}-\eta$, so that $0<1/\left[\left(\alpha-\frac{s_0}{\sigma}\right)r_6\right]=1/2-s/N-\eta<1$, and $r_5=\rho$.
	 With this choice and recalling that $\alpha = \frac{4-2b}{N-2s_c}$, the above relations imply
	 $$
	 \frac{1}{\rho}=\frac{N-2}{2N}+\frac{\alpha(s-s_c)}{2N}+\frac{\alpha \eta}{2}\quad \mbox{and} \quad \frac{1}{\gamma'}=\frac{\alpha(s-s_c)}{2}+\frac{\alpha }{q_8}+\frac{1}{q_5}.
	 $$
	 In order to have $(\gamma, \rho) \in \mathcal{A}_{-(s-s_0)}$, we need to satisfy the condition
	 \begin{equation}
	 	\frac{N-2}{2N}<\frac{1}{\rho}<\frac{N-2(s-s_0)}{2N},
	 \end{equation}
	 which is true, for $\eta>0$ sufficiently small, since $\alpha(1+s_c)>0$. By then calculating the $L^{\gamma'}_t$-norm and using H\"older and the corresponding scaling relations, one obtains \eqref{dual_grad_l_2} in this case. 
	 %On the other hand, if $N=1$ and $s<\alpha<1$, the same choice for $r_8$ and $r_5$ as above implies
	 %$$
	 %\frac{1}{\rho}=\frac{1-b-\alpha(\frac{1}{2}-\eta-s)}{2}.
	 %$$
	 %Taking $s_0=s<\alpha$, we need to satisfy
	 %$$
	 %0<\frac{1}{\rho}<\frac{1-2(s-s_0)}{2},
	 %$$
	 %which is clearly true since $s<1/2$.
	 
	 The case $\alpha \geq 1$ follows by setting $s_0 = s$ and using the same spaces as in \eqref{middle_D_s} to write
	 \begin{equation}
	 	\left\|D^s(|x|^{-b}|u|^{\alpha}v)\right\|_{L^{\gamma'}_tL^{\rho'}_x}
	 	\lesssim 
	 	|I|^{\frac{\alpha( s-s_c)}{2}}\|D^{ s}u\|_{L^{q_{3}}_tL^{r_{3}}_x}\left[\|D^s u\|_{L^{q_3^-}_tL^{r_3^+}_x}\|D^s v\|_{L^{q_3^+}_tL^{r_3^-}_x}\right]^{\frac{1}{2}}.
	 \end{equation}
\end{proof}

\begin{lem}
	\label{lem_guz3} 
	Under assumption \eqref{hipot}, if $0\leq s_c\leq1$:
	\begin{align}
		\label{dual_s1}\left\|F(x,u,v)\right\|_{S'(\dot{H}^{-s_c},I)}&\lesssim\left\|u\right\|^{\alpha}_{S(\dot{H}^{s_c},I)}\left\|v\right\|_{S(\dot{H}^{s_c},I)},\\
		\label{dual_s2}\left\|D^{s_c}F(x,u)\right\|_{S'(L^2,I)}&\lesssim\left\|u\right\|^{\alpha}_{S(\dot{H}^{s_c},I)}\left\|u\right\|_{\dot{S}^{s_c}(L^2,I)}.
	\end{align}
	Moreover, if $\alpha \geq 1$,
	\begin{align}
		\label{dual_s_5}\left\|D^{s_c}(F(x,u)-F(x,v))\right\|_{S'(L^2,I)}&\lesssim\left(\left\|u\right\|_{S(\dot{H}^{s_c},I)}+\left\|v\right\|_{S(\dot{H}^{s_c},I)}\right)^{\alpha}\left\|u-v\right\|_{\dot{S}^{s_c}(L^2,I)} \\
		&\hspace{-3cm}+ \left(\left\|u\right\|_{S(\dot{H}^{s_c},I)}+\left\|v\right\|_{S(\dot{H}^{s_c},I)}\right)^{\alpha-1} \left(\left\| u\right\|_{\dot{S}^{s_c}(L^2,I)}+\left\| u\right\|_{\dot{S}^{s_c}(L^2,I)}\right)\left\|u-v\right\|_{\dot{S}^{s_c}(L^2,I)},
	\end{align}
	and, if $\alpha<1$,
	\begin{align}
		\label{dual_s_6}\left\|D^{s_c}(F(x,u)-F(x,v))\right\|_{S'(L^2,I)}&\lesssim\left(\left\|u\right\|_{S(\dot{H}^{s_c},I)}+\left\|v\right\|_{S(\dot{H}^{s_c},I)}\right)^{\alpha}\left\|u-v\right\|_{\dot{S}^{s_c}(L^2,I)}\\
		& \quad+ \left\|u-v\right\|^{\alpha}_{S(\dot{H}^{s_c},I)}\left(\left\|u\right\|_{\dot{S}^{s_c}(L^2,I)}+\left\|v\right\|_{\dot{S}^{s_c}(L^2,I)}\right).
	\end{align}
\end{lem}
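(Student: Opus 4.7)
The plan is to specialize the proofs of Lemmas \ref{lem_guz} and \ref{lem_guz2} to the scale-invariant regime $s=s_c$. The absence of any $|I|$ factor in the conclusions is a direct consequence of the critical relation $\alpha(N/2-s_c)=2-b$, which eliminates the time weight $|I|^{\theta\alpha(s-s_c)/2}$ that appeared in the subcritical proofs; algebraically, the Hölder exponents in time will now balance exactly rather than leaving a residual power of $|I|$.

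For \eqref{dual_s1}, I would fix $(\gamma,\rho)\in\mathcal{A}_{-s_c}$ and apply Lemma \ref{basic_interp} with $s=0$ (the argument in the lemma's proof works verbatim without the derivative) to get, at each time,
\[
\||x|^{-b}|u|^\alpha v\|_{L^{\rho'}_x}\lesssim_\eta \bigl[\||u|^\alpha v\|_{L^{\tilde q_+}_x}\||u|^\alpha v\|_{L^{\tilde q_-}_x}\bigr]^{1/2},
\]
where $\tilde q_\pm^{-1}=\rho'^{-1}-(b\pm\eta)/N$. Spatial H\"older then splits each factor as $\|u\|^\alpha_{L^{r_u^\pm}_x}\|v\|_{L^{r_v^\pm}_x}$, with the $\pm\eta$ shift distributed symmetrically between the $u$ and $v$ exponents, and temporal H\"older with two nearby $\dot H^{s_c}$-admissible pairs $(q_u^\pm,r_u^\pm),(q_v^\pm,r_v^\pm)\in\mathcal{A}_{s_c}$ closes the estimate, after taking the geometric mean back into a single Strichartz norm via Cauchy--Schwarz in time.

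For \eqref{dual_s2}, the structure mirrors the proof of \eqref{dual_grad_l} with $\tilde s=s_c$: with $(\gamma,\rho)\in\mathcal{A}_0$, Lemma \ref{basic_interp} (now with $s=s_c$) bounds $\|D^{s_c}(|x|^{-b}|u|^\alpha u)\|_{L^{\rho'}_x}$ by a geometric mean of $\|D^{s_c}(|u|^\alpha u)\|_{L^{\tilde q_\pm}_x}$, and the fractional chain rule (Lemma \ref{chain_2} if $\alpha\ge 1$, Lemma \ref{chain_3} if $\alpha<1$) further splits this as $\|u\|^\alpha_{L^{r_2}_x}\|D^{s_c}u\|_{L^{r_3^\pm}_x}$, with $r_2^{-1}=r_3^{-1}-s_c/N$ so that $(q_3,r_2)\in\mathcal{A}_{s_c}$ and $(q_3^\pm,r_3^\mp)\in\mathcal{A}_0$. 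Passing to Strichartz norms yields the required bound by $\|u\|^\alpha_{S(\dot H^{s_c})}\|u\|_{\dot S^{s_c}(L^2)}$. The difference estimates \eqref{dual_s_5}--\eqref{dual_s_6} follow by the same scheme, replacing the fractional chain rule by Lemma \ref{lem:diff_estim_Hs}: the Lipschitz bound ($\alpha\ge 1$) yields \eqref{dual_s_5}, while for $\alpha<1$ the additional H\"older piece of order $\alpha$ in $u-v$ gives the second summand of \eqref{dual_s_6}, which is measured at the lower-regularity level $S(\dot H^{s_c})$.

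The main obstacle will be the careful bookkeeping of the $\pm\eta$ parameters introduced by Lemma \ref{basic_interp}: after H\"older in space and time, all resulting exponent pairs must lie in $\mathcal{A}_{s_c}$, $\mathcal{A}_{-s_c}$ or $\mathcal{A}_0$ as appropriate, with the small $\eta$-shifts absorbed by the open range of admissible exponents. The algebraic identity $\alpha(N/2-s_c)=2-b$ is precisely what forces the competing time and space scaling conditions to be compatible at $s=s_c$, so that no room is lost and no $|I|$ factor remains.
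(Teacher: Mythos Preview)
Your proposal is correct and follows essentially the same route as the paper: Lemma \ref{basic_interp} to handle $|x|^{-b}$, H\"older to split $|u|^\alpha v$, and then the fractional chain/difference rules, with the critical scaling $\alpha(N/2-s_c)=2-b$ killing the time weight. One point you gloss over that the paper treats explicitly is the endpoint $s_c=1$ (which forces $N\ge 3$): there the admissible sets $\mathcal{A}_{\pm 1}$ are defined via Foschi's non-$L^2$ Strichartz estimates with carefully chosen parameters $\epsilon_0,\epsilon_1$, and the paper picks $r_2=r_3=2N/(N-2-\epsilon)$ and then tunes $\epsilon_0,\epsilon_1$ in terms of $\epsilon$ and $b$ so that the resulting $(\gamma,\rho)$ and $(q_j,r_j)$ actually land in $\mathcal{A}_{-1}$ and $\mathcal{A}_1$; the ``open range'' argument you invoke does not apply directly here since $\mathcal{A}_{\pm 1}$ are thin slabs rather than open intervals around a natural endpoint.
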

\begin{proof}
	We first prove \eqref{dual_s1}. Let $(\gamma, \rho) \in \mathcal{A}_{-s_c}$ and write
	\begin{align}
		\left\||x|^{-b}|u|^{\alpha}v\right\|_{L^{\gamma'}_tL^{\rho'}_x} &\lesssim \left\|\left[\||u|^{\alpha}v\|_{L^{r_{1}^+}_x}\||u|^{\alpha}v\|_{L^{r_{1}^-}_x}\right]^{\frac{1}{2}}\right\|_{L^{\gamma'}_t}\\
		&\lesssim 
		\|u\|^{\alpha}_{L^{q_{2}}_tL^{r_{2}}_x}\left[\|v\|_{L^{q_3^-}_tL^{r_3^+}_x}\|v\|_{L^{q_3^+}_tL^{r_3^-}_x}\right]^{\frac{1}{2}},
	\end{align}
	where
	
	\begin{equation}
		\frac{1}{\rho'} = \frac{b\pm\eta}{N}+\frac{1}{r_1^\pm}, \quad \frac{1}{r_1}=\frac{\alpha}{r_2} + \frac{1}{r_3}, \quad (q_j, r_j) \in \mathcal{A}_{s_c}, \,\, j=2,3 .
	\end{equation}
	We start considering $s_c<1$. If $N\geq 2$, we set $\frac{1}{r_2}=\frac{N-2s_c}{2N}-\eta$ and $r_3 = \rho$ to get
	$$
	\frac{1}{\rho}=\frac{N-2}{2N}+\frac{\alpha\eta}{2}.
	$$
	On the other hand, if $N=1$, we set $\frac{1}{\rho}=\eta$ and $r_3 = \rho$ to deduce
	$$
	\frac{1}{r_2}=\frac{1-b-2\eta}{\alpha}<\frac{2-b}{\alpha}=\frac{1-2s_c}{2}.
	$$
	In both cases, we satisfy the required conditions to ensure $(\gamma, \rho) \in \mathcal{A}_{-s_c}$ and $(q_j, r_j) \in \mathcal{A}_{s_c}, \,\, j=2,3$.
	It remains to consider the case $s_c = 1$, which requires $N \geq 3$. We now choose fix $0<\epsilon\ll1$, impose $0<\eta\ll\epsilon$ and set $r_2 = r_3= \frac{2N}{N-2-\epsilon}$ to obtain 
	\begin{equation}
		\frac{1}{\rho} = \frac{N-2+(1+(4-2b)/(N-2))\epsilon}{2N}.
	\end{equation}
	Therefore, choosing $\epsilon_0, \epsilon_1$ in the definition of $\mathcal{A}_{\pm1}$ (c.f. Definition \ref{As}) as 
	
	\begin{equation}
		\epsilon_0 = \left[1+\frac{2-b}{2(N-2)}\right]\epsilon, \quad \epsilon_1 =\left(1+\frac{2-b}{N-2}\right)\epsilon,
	\end{equation}
	we ensure $(\gamma, \rho)\in \mathcal{A}_{-1}$ and $(q_2,r_2),(q_3^{\mp},r_3^{\pm}) \in\mathcal{A}_{1}$. 
	
	It remains to prove \eqref{dual_s2}. In the same fashion as \eqref{middle_D_s}, for $(\gamma,\rho)\in\mathcal{A}_0$, we write
	\begin{equation}
		\left\|D^{s_c}(|x|^{-b}|u|^{\alpha}u)\right\|_{L^{\gamma'}_tL^{\rho'}_x}\lesssim 
		\|u\|_{L^{q_{2}}_tL^{r_{2}}_x}^{\alpha} \left[\|D^{s_c} u\|_{L^{q_3^-}_tL^{r_3^+}_x}\|D^{s_c} u\|_{L^{q_3^+}_tL^{r_3^-}_x}\right]^{\frac{1}{2}},
	\end{equation}
	with the relations
	
	\begin{equation}
		\frac{1}{\rho'} = \frac{b\mp \eta}{N} + \frac{1}{r_{1}^\pm},\quad 
		\frac{1}{r_1} = \frac{\alpha}{r_2}+\frac{1}{r_3}, \quad (q_2,r_2) \in \mathcal{A}_{s_c}, \quad (q_3^\mp,r_3^\pm) \in \mathcal{A}_0.
	\end{equation}
	As in the proof of \eqref{dual_s1}, when $s_c<1$ we choose $\frac{1}{r_2}=\frac{N-2s_c}{2N}-\eta$ and $r_3 = \rho$ if $N\geq 2$, and $\frac{1}{\rho}=\eta$ and $r_3 = \rho$ if $N=1$. For the case $s_c=1$, which requires $N\geq 3$, we set $r_2 = \frac{2N}{N-2-\epsilon}$, $r_3 = \rho$ and obtain
	$$
	\frac{1}{\rho}=\frac{N-2+\epsilon(2-b)/(N-2)}{2N}>\frac{N-2}{2N}.
	$$ 
	For all these choices we ensure $(\gamma, \rho), (q_3^\mp,r_3^\pm) \in \mathcal{A}_0$ and $(q_2,r_2) \in \mathcal{A}_1$.
	{
		Finally, the same choice of indices give, in the case $\alpha \geq 1$,
		
		\begin{align}
			&\left\|D^{s_c}(|x|^{-b}|u|^{\alpha}u-|x|^{-b}|v|^ \alpha v)\right\|_{L^{\gamma'}_tL^{\rho'}_x}\\\lesssim\ & \left(\|u\|_{L^{q_{2}}_tL^{r_{2}}_x}+\|v\|_{L^{q_{2}}_tL^{r_{2}}_x}\right)^{\alpha} \left[\|D^{s_c} (u-v)\|_{L^{q_3^-}_tL^{r_3^+}_x}\|D^{s_c} (u-v)\|_{L^{q_3^+}_tL^{r_3^-}_x}\right]^{\frac{1}{2}}\\
			+\ &\left(\|u\|_{L^{q_{2}}_tL^{r_{2}}_x}+\|v\|_{L^{q_{2}}_tL^{r_{2}}_x}\right)^{\alpha-1}\left(\|D^{s_c}u\|_{L^{q_{3}}_tL^{r_{3}}_x}+\|D^{s_c}v\|_{L^{q_{3}}_tL^{r_{3}}_x}\right) \left[\|u-v\|_{L^{q_2^-}_tL^{r_2^+}_x}\|u-v\|_{L^{q_2^+}_tL^{r_2^-}_x}\right]^{\frac{1}{2}},
		\end{align}
		and, in the case $\alpha < 1$,
		\begin{align}
			&\left\|D^{s_c}(|x|^{-b}|u|^{\alpha}u-|x|^{-b}|v|^ \alpha v)\right\|_{L^{\gamma'}_tL^{\rho'}_x}\\\lesssim\ & \left(\|u\|_{L^{q_{2}}_tL^{r_{2}}_x}+\|v\|_{L^{q_{2}}_tL^{r_{2}}_x}\right)^{\alpha} \left[\|D^{s_c} (u-v)\|_{L^{q_3^-}_tL^{r_3^+}_x}\|D^{s_c} (u-v)\|_{L^{q_3^+}_tL^{r_3^-}_x}\right]^{\frac{1}{2}}\\
		+\ & \| u-v\|_{L^{q_{2}}_tL^{r_{2}}_x}^{\alpha}\left[ \left(\|D^{s_c} u\|_{L^{q_3^-}_tL^{r_3^+}_x}\|D^{s_c} u\|_{L^{q_3^+}_tL^{r_3^-}_x}\right)^{\frac{1}{2}}+\left(\|D^{s_c} v\|_{L^{q_3^-}_tL^{r_3^+}_x}\|D^{s_c} v\|_{L^{q_3^+}_tL^{r_3^-}_x}\right)^{\frac{1}{2}}\right],
		\end{align}
		concluding the proof of inequalities \eqref{dual_s_5} and \eqref{dual_s_6}.

	}
\end{proof}

%\begin{rem}
%Inequality \eqref{dual_grad_l_2} cannot be deduced from the above argument in the case $N=1$ and $\alpha<s<1/2$. Indeed, the relations \eqref{reln=1} imply
%\begin{align}
%\frac{1}{\rho}&+\frac{1}{r_5}=1-b-\alpha\left(\frac{1}{r_8}-s\right),\label{r11}\\
%0<&\frac{1}{\rho},\frac{1}{r_5}<\frac{1-2(s-s_0)}{2},\label{r12}\\
%s<&\frac{1}{r_8}<\frac12, \quad s_0<\alpha.
%\end{align}
%The best choice for the parameters are
%$$
%\frac{1}{r_8}=\frac{1}{2}^-, \quad s_0=\alpha^- \,\,\, \mbox{and}\,\,\, \rho=r_5,
%$$
%so that the right hand side of \eqref{r11} is minimum and the right hand side of \eqref{r12} is maximum. Therefore to satisfy \eqref{r12} we need
%$$
%2s<b+\frac{5\alpha}{2}+\alpha s,
%$$
%which does not hold for $s=\frac{1}{2}^-$ and $\alpha=0^+$ since $b<1$.
%\end{rem}

\section{Well-posedness}\label{Well}
In this section, we show the existence results stated in Theorems \ref{InhTheo} and \ref{HomTheo}.
\subsection{Well-posedness in inhomogeneous Sobolev spaces}

\begin{proof}[Proof of Theorem \ref{InhTheo} (a)] Fixed $u_0 \in {H}^{s}$, let 
\begin{align}
    E = \Big\{u \in L^{\infty}_t{H}^s_x([0,T])\cap S^s(L^2,[0,T]) :\,
\| u\|_{\tilde{S}^s(L^2,[0,T])} \leq 2C \|u_0\|_{{H}^{s}} \Big\}
\end{align}
be a (complete) metric space with the metric
$$
\rho(u,v) = \|u-v\|_{S(L^2,[0,T])}.
$$
Then, applying the linear estimates \eqref{S2} and \eqref{KS2}, and the nonlinear estimates \eqref{dual_l}, \eqref{dual_grad_l} and \eqref{dual_grad_s_1} to the operator \eqref{eq:duhamelop}, we obtain
\begin{align}
    \| \Phi(u)\|_{\tilde{S}^s(L^2,[0,T])} 
    %&\leq C \|u_0\|_{{H}^{s}} + C\|u\|_{S(\dot{H}^{s_c},[0,T])}^{\alpha}\|D^{s}u\|_{S(L^2,[0,T])}\\
    &\leq C \|u_0\|_{{H}^{s}} + CT^{\frac{\alpha(s-s_c)}{2}}\|u\|_{ S^s(L^2,[0,T])}^{\alpha+1}\\
    &\leq C[1+O(T^\frac{s-s_c}{2}\|u_0\|_{\dot{H}^s})^{\alpha}]\|u_0\|_{{H}^{s}}
    \end{align}
    and
  \begin{align}  
    \|\Phi(u)-\Phi(v)\|_{S(L^2,[0,T])} 
    %&\leq C\left[\|u\|_{S(L^2,[0,T])}^{\alpha}+\|v\|_{S(\dot{H}^{s_c},[0,T])}^{\alpha}\right]\|u-v\|_{S(\dot{H}^{s},[0,T])}\\
    &\leq O(T^\frac{s-s_c}{2}\|u_0\|_{{H}^s})^{\alpha}\|u-v\|_{S(L^2,[0,T])}.
\end{align}
Thus, choosing $T = \delta\|u_0\|_{H^s}^{-\frac{2}{s-s_c}}$ for $\delta >0$ small enough (depending only on universal constants) we deduce the result by standard arguments.

{

% \subsection{Continuous dependence}

We now deal with the continuous dependence on the initial data in the case $0\leq s \leq 1$. The argument is split in the cases $\alpha\geq 1$ and $0<\alpha < 1$. For the former case, the previous estimates and the Duhamel formula give, for $T>0$ such that two solutions $u$ and $v$ are defined on $[0,T]$,

\begin{align}
    \|u - v\|_{\tilde{S}^s(L^2,[0,T])} &\lesssim \|u_0 - v_0\|_{H^s} \\&\quad+T^{\theta \alpha(s-s_c)/2}\left(\|u\|_{S^s(L^2,[0,T])} + \|v\|_{S^s(L^2,[0,T])}\right)^{\alpha}
    \|u-v\|_{S^s(L^2,[0,T])}
    \\&\lesssim \|u_0 - v_0\|_{H^s} + T^{\theta \alpha(s-s_c)/2}\left(\|u_0\|_{H^s} + \|v_0\|_{H^s}\right)^{\alpha}
\|u-v\|_{S^s(L^2,[0,T])}.
\end{align}
Now, if $\|u_0 - v_0\|_{H^s}  \ll {\|u_0\|_{H^s}}$, our choice of $T$ allows us to conclude (by possibly decreasing $\delta$)
\begin{equation}
    \|u - v\|_{\tilde{S}^s(L^2,[0,T])} \lesssim \|u_0 - v_0\|_{H^s}.
\end{equation}
For the case $0<\alpha<1$, we first establish control on lower-regularity norms. By \eqref{dual_l},
\begin{align}
     \|u - v\|_{\tilde{S}(L^2,[0,T])}
    &\lesssim \|u_0 - v_0\|_{L^2} + T^{ \theta\alpha(s-s_c)/2}\left(\|u_0\|_{H^s} + \|v_0\|_{H^s}\right)^{\alpha}
\|u-v\|_{S(L^2,[0,T])},
% \\&\quad+ T^{\theta \alpha(s-s_c)/2}\left(\|u_0\|_{H^s} + \|v_0\|_{H^s}\right)
% \|u-v\|_{S^s(L^2,[0,T])}^{\alpha}.
\end{align}
which, if $\|u_0 - v_0\|_{H^s} \ll \|u_0\|_{H^s}$, allows us to conclude
\begin{align}
     \|u - v\|_{\tilde{S}(L^2,[0,T])}
    &\lesssim \|u_0 - v_0\|_{L^2}.
\end{align}
% Similarly, by \eqref{dual_grad_s_1},
% \begin{align}
%      \|D^{s}(u - v)\|_{S(L^2,[0,T])}
%     &\lesssim \|u_0 - v_0\|_{H^s} + T^{ \alpha(s-s_c)/2}\left(\|u_0\|_{H^s} + \|v_0\|_{H^s}\right)^{\alpha}
% \|D^{s}(u-v)\|_{S(L^2,[0,T])}.
% \end{align}
% This Lipschitz bound allows us to conclude 
% \begin{align}
%      \|D^{s}(u - v)\|_{S(L^2,[0,T])}
%     &\lesssim \|u_0 - v_0\|_{H^s}.
% \end{align}
The control on the lower-regularity norm can thus be passed to the higher-regularity norm by using \eqref{dual_grad_s_2}:
\begin{align}
     \|D^s(u - v)\|_{\tilde S (L^2,[0,T])}
    &\lesssim \|u_0 - v_0\|_{H^s} + T^{ \alpha(s-s_c)/2}\left(\|u_0\|_{H^s} + \|v_0\|_{H^s}\right)^{\alpha}
\|u-v\|_{\dot{S}^s(L^2,[0,T])}\\
&\quad+T^{ \alpha(s-s_c)/2}\|u-v\|_{S(L^2,[0,T])}^{\eta \alpha}\left(\|u_0\|_{H^s} + \|v_0\|_{H^s}\right)^{1+(1-\eta)\alpha},
\end{align}
which, by the previous estimates, implies
\begin{align}
     \|D^s(u - v)\|_{\tilde{S}(L^2,[0,T])}
    &\lesssim \|u_0 - v_0\|_{H^s} + \|u_0 - v_0\|_{H^s}^{\eta\alpha}.
\end{align}
% The last term on the left-hand side can be estimated by Young's inequality as: 

% \begin{align}
%     C_1T^{\theta \alpha(s-s_c)/2}\left(\|u_0\|_{H^s} + \|v_0\|_{H^s}\right)
% \|u-v\|_{S^s(L^2,[0,T])}^{\alpha} &\leq\\
% &\hspace{-4.5cm}C_{2} \left[T^{\theta \alpha(s-s_c)/2}\left(\|u_0\|_{H^s} + \|v_0\|_{H^s}\right)\right]^{\frac{1}{1-\alpha}} + \frac{1}{2}\|u-v\|_{S^s(L^2,[0,T])}, 
% \end{align}
% which allows us again to choose $T = T(\|u_0\|_{H^s}) \ll 1$ so that $\|(1-\Delta)^{s/2}(u - v)\|_{\tilde{S}(L^2,[0,T])}<\epsilon$.
}
\end{proof}

\begin{proof}[Proof of Theorem \ref{InhTheo} (b)]
 In this case, for $\delta_0>0$ to be chosen later, we let $T_0>0$ be such that
\begin{equation}\label{delta_0_bound}
    \|e^{it\Delta}u_0\|_{S^s(L^2,[0,T_0])}<\delta_0.
\end{equation}
We then define the complete metric space $(E,\rho)$ by

\begin{align}
E = \Big\{u \in \tilde{S}^s(L^2,[0,T]) :\,
& \|u\|_{L^\infty_tH^s_x([0,T_0])}\leq 2C \|u_0\|_{H^s},\\
&\| u\|_{S^s(L^2,[0,T_0])} \leq 2 \|e^{it\Delta}u_0\|_{S^s(L^2,[0,T_0])}\Big\},
\end{align}
\begin{equation}
    \rho(u,v) = \|u-v\|_{S(L^2,[0,T_0])}.
\end{equation}
Thus, using the same ideas as in part (a) and \eqref{delta_0_bound}, we have
\begin{align}
    \|\Phi(u)\|_{L^\infty_tH^s_x([0,T_0])}&\leq C\|u_0\|_{H^s}+C\|u\|_{S^s(L^2,[0,T_0])}^{\alpha+1}\\
    &\leq [1+O(\delta_0)^\alpha]C\|u_0\|_{H^s}\\
    \| \Phi(u)\|_{S^s(L^2,[0,T_0])} 
    %&\leq C \|u_0\|_{{H}^{s}} + C\|u\|_{S(\dot{H}^{s_c},[0,T])}^{\alpha}\|D^{s}u\|_{S(L^2,[0,T])}\\
    &\leq \|e^{it\Delta}u_0\|_{S^s(L^2,[0,T_0])}  + C\|u\|_{S^s(L^2,[0,T_0])}^{\alpha+1}\\
    &\leq [1+O(\delta_0)^{\alpha}]\|e^{it\Delta}u_0\|_{S^s(L^2,[0,T_0])}
    \end{align}
    and
    \begin{align}
    \|\Phi(u)-\Phi(v)\|_{S(L^2,[0,T_0])} 
    %&\leq C\left[\|u\|_{S(L^2,[0,T])}^{\alpha}+\|v\|_{S(\dot{H}^{s_c},[0,T])}^{\alpha}\right]\|u-v\|_{S(\dot{H}^{s},[0,T])}\\
    &\leq o_{\delta_0}(1)\|u-v\|_{S(L^2,[0,T_0])}.
\end{align}
The result follows by choosing $\delta_0 >0$ small enough, depending only on universal constants. Note that, if 

\begin{equation}
    \|e^{it\Delta}u_0\|_{S^s(L^2,\mathbb R)}<\delta_0,
\end{equation}
then the solution exists for all $t \in \mathbb{R}$.

{For the continuous dependence in this case, if $u_0 \in H^s$, we choose again $T_0$ as in \eqref{delta_0_bound} and note that, if $v_0 \in H^s$ is such that $\|u_0-v_0\|_{H^s} \ll \delta_0$, then the corresponding solutions $u$ and $v$ are defined on $[0,T_0]$ satisfing
\begin{align}
    \|u\|_{L^{\infty}_{[0,T_0]}H^s_x} + \|v\|_{L^{\infty}_{[0,T_0]}H^s_x} &\lesssim \|u_0\|_{H^s}
\end{align}
and
\begin{align}
    \|u\|_{S^s(L^2,[0,T_0])}+\|v\|_{S^s(L^2,[0,T_0])} &\lesssim \delta_0.\\
\end{align}
By Duhamel and Strichartz, we also have
\begin{align}
    \|u-v\|_{\tilde{S}(L^2,[0,T_0])} &\lesssim \|u_0-v_0\|_{L^2} + \left(\|u\|_{S^s(L^2,[0,T_0])}+\|v\|_{S^s(L^2,[0,T_0])}\right)^\alpha\|u-v\|_{S(L^2,[0,T_0])}\\
    &\lesssim \|u_0-v_0\|_{L^2} + \delta_0^\alpha\|u-v\|_{S(L^2,[0,T_0])},
\end{align}
which implies, by possibly choosing a smaller (but universal) $\delta_0$, 
\begin{equation}
    \|u-v\|_{\tilde{S}(L^2,[0,T_0])} \lesssim \|u_0-v_0\|_{L^2} .
\end{equation}
Moreover, we have, for $\alpha \geq 1$
\begin{align}
    \|u-v\|_{\tilde{S}^s(L^2,[0,T_0])} &\lesssim\|u_0-v_0\|_{H^s} + \left(\|u\|_{S^s(L^2,[0,T_0])}+\|v\|_{S^s(L^2,[0,T_0])}\right)^\alpha\|u-v\|_{S^s(L^2,[0,T_0])}\\
    &\lesssim \|u_0-v_0\|_{L^2} + \delta_0^\alpha\|u-v\|_{S^s(L^2,[0,T_0])},
\end{align}
which implies $\|u-v\|_{\tilde{S}^s(L^2,[0,T_0])}\lesssim\|u_0-v_0\|_{H^s} $.
In the case $\alpha < 1$, the estimate is
\begin{align}
    \|D^{s_0}(u-v)\|_{S(\dot{H}^{s-s_0},[0,T_0])} &\lesssim
    \|u_0-v_0\|_{H^s} + \left(\|u\|_{S^s(L^2,[0,T_0])}+\|v\|_{S^s(L^2,[0,T_0])}\right)^\alpha\\
    % &\quad+ \|u-v\|_{S^s(L^2,[0,T_0])}^{\alpha }\left(\left\|u\right\|_{S^s\left(L^2, I\right)}+\left\|v\right\|_{S^s\left(L^2, [0,T_0]\right)}\right)\\
    &\quad \times \|D^{s_0}(u-v)\|_{S(\dot{H}^{s-s_0},[0,T_0])}
    % &\hspace{-3cm}\lesssim \|u_0-v_0\|_{L^2} + \delta_0^ {\alpha}\|u-v\|_{S^s(L^2,[0,T_0])}+\delta_0\|u-v\|_{S^s(L^2,[0,T_0])}^{\alpha}.
\end{align}
which implies $\|D^{s_0}(u-v)\|_{S(\dot{H}^{s-s_0},[0,T_0])} \lesssim\|u_0-v_0\|_{H^s}$. This control can thus be used to bound
\begin{align}
    \|u-v\|_{\tilde{S}^s(L^2,[0,T_0])} &\lesssim\|u_0-v_0\|_{H^s} + \left(\|u\|_{S^s(L^2,[0,T_0])}+\|v\|_{S^s(L^2,[0,T_0])}\right)^\alpha\|u-v\|_{S^s(L^2,[0,T_0])}\\
    &\quad+ \|D^{s_0}(u-v)\|_{S(\dot{H}^{s-s_0},[0,T_0])}^{\alpha }\left(\left\|u\right\|_{S^s\left(L^2, [0,T_0]\right)}+\left\|v\right\|_{S^s\left(L^2, [0,T_0]\right)}\right)\\
    &\lesssim \|u_0-v_0\|_{H^s} + \delta_0^ {\alpha}\|u-v\|_{S^s(L^2,[0,T_0])}+\delta_0\|u_0-v_0\|_{H^s}^{\alpha},
\end{align}
which is enough to conclude the claimed continuous dependence.
}

% 

% \begin{lem}[{Critical small data theory in $H^{s}$}]\label{small_data} Let $N \geq 1$, $0\leq s<\alpha+1$, $ \alpha =\frac{4-2b}{N-2s}$ and $0 < b <\min\{\frac{N}{2}+1-s,N-s,2\}$.
% %and $A>0$ such that $\|u_0\|_{\dot{H}^{s_c}}\leq A$
%  There exists $\delta_{sd} 
% %= \delta_{sd}(A) > 0
% $ such that if 
% \begin{equation}
% 	\|(1-\Delta)^{s/2} e^{it\Delta}u_0\|_{S(L^2,[0,+\infty))} \leq \delta_{sd},
% \end{equation}
% then the solution $u$ to \eqref{INLS} with initial condition $u_0 \in {H}^{s}(\mathbb{R}^N)$ is globally defined on $[0,+\infty)$. Moreover,
% \begin{equation}
% 		\|(1-\Delta)^{s/2} u\|_{S(L^2,[0,+\infty))} \leq 2 	\|(1-\Delta)^{s/2} e^{it\Delta}u_0\|_{S(L^2,[0,+\infty))}.
% \end{equation}

% %\begin{equation}
% %	\|u\|_{S(L^2, [0,+\infty))}+\|\nabla u\|_{S(L^2, [0,+\infty))} \lesssim \|u_0\|_{H^1}.
% %\end{equation}
% Furthermore, $u$ scatters forward in time in ${H}^{s}$, i.e., here exists $u_+ \in {H}^s$ such that
% \begin{equation}
% 	\lim_{t\to+\infty} \|u(t)-e^{it\Delta}u_+\|_{{H}^s_x} = 0.
% \end{equation}
% \end{lem}
% \begin{proof}
% The global existence is a corollary from the corresponding well-posedness theorem. Scattering follows again by a standard argument involving Strichartz estimates.
% \end{proof}
\end{proof}
\subsection{Well-posedness in homogeneous Sobolev spaces}

\begin{proof}[Proof of Theorem \ref{HomTheo} (a)]
Fixed $u_0 \in \dot{H}^{s} $, let 

\begin{align}
E = \Big\{ \tilde {S}^s(L^2,[0,T]) 
\cap L^{\infty}_t L^{\frac{2N}{N-2s}}_x([0,T]\times \mathbb R^N): \,
\|D^s u\|_{\tilde{S}(L^2,[0,T])} \leq 2C \|u_0\|_{\dot{H}^{s}},
\Big\}    
\end{align}
 be a metric space with the metric

$$
\rho(u,v) = \|D^{s_0}(u-v)\|_{S(\dot{H}^{s-s_0},[0,T])},
$$
where $s_0$ is given in Lemma \ref{lem_guz2}.
% Note that 
% \begin{equation}
% \|u-v\|_{L^{\frac{2}{\epsilon}}_tL^{\frac{2N}{N-2-\epsilon}}_x([0,T])}\lesssim \rho(u,v),    
% \end{equation}
% and
% \begin{equation}
%     \|D^{\epsilon}u\|_{S(\dot{H}^{s-\epsilon},[0,T])}\lesssim \|D^s u\|_{S(L^2,[0,T])},
% \end{equation}
% which shows that $(E,\rho)$ is complete.

% EXPLICAÇAO SOBRE O FATO DE SER COMPLETO???

Thus, from inequalities \eqref{dual_grad_l_2} and \eqref{dual_grad_l_3}, we deduce
\begin{align}
    \|D^{s} \Phi(u)\|_{\tilde{S}(L^2,[0,T])} 
    &\leq C \|u_0\|_{\dot{H}^{s}} + CT^{\frac{\alpha(s-s_c)}{2}}\|D^{s}u\|_{S(L^2,[0,T])}^{\alpha+1}\\
    &\leq C[1+O(T^\frac{s-s_c}{2}\|u_0\|_{\dot{H}^s})^{\alpha}]\|u_0\|_{\dot{H}^{s}}
    \end{align}
    and
\begin{align}    
    \|D^{s_0}[\Phi(u)-\Phi(v)]\|_{S(\dot{H}^{s-s_0},[0,T])} &\leq O\left(T^\frac{s-s_c}{2}\|u_0\|_{\dot{H}^s})\right)^{\alpha}\|D^{s_0}(u-v)\|_{S(\dot{H}^{s-s_0},[0,T])}.
\end{align}
We then choose $T = \delta\|u_0\|_{\dot{H}^s}^{-\frac{2}{s-s_c}}$ for $\delta >0$ small enough (depending only on universal constants). {
The continuous dependence in this case is similar to the inhomogeneous case, using Lemma \ref{lem_guz2} instead of Lemma \ref{lem_guz}, and it will be omitted.
}
\end{proof}

From the nonlinear estimates, one can also prove a blow-up rate for subcritical finite-time blow up.
\begin{cor}[Subcritical blow-up rate] \label{BUrate}
Under the conditions of Theorem \ref{HomTheo} (a), if the corresponding solution $u$ blows up in finite positive time $T>0$, then
\begin{equation}
    \|u(t)\|_{\dot{H}^s} \gtrsim \frac{1}{(T-t)^{\frac{s-s_c}{2}}},
\end{equation}
for any $t \in [0,T)$.
\end{cor}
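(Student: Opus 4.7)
The plan is to deduce the blow-up rate from the fact that the local time of existence in Theorem \ref{HomTheo}(a) depends only on the $\dot{H}^s$-norm of the initial datum, via the explicit relation $T = \delta \|u_0\|_{\dot{H}^s}^{-2/(s-s_c)}$, where $\delta>0$ depends only on universal constants.

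Concretely, I would argue by contradiction (or, equivalently, by the standard blow-up alternative). Fix $t \in [0,T)$ and apply Theorem \ref{HomTheo}(a) with initial datum $u(t) \in \dot{H}^s$: it yields a solution $\tilde{u}$ of \eqref{INLS} starting at time $t$, defined on the interval $[t, t + \delta\|u(t)\|_{\dot{H}^s}^{-2/(s-s_c)}]$. By the uniqueness part of Theorem \ref{HomTheo}(a), this solution $\tilde{u}$ coincides with $u$ on the intersection of their domains. Hence $u$ can be continued up to time $t + \delta\|u(t)\|_{\dot{H}^s}^{-2/(s-s_c)}$.

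Since $T$ is the maximal time of existence (the blow-up time), we must therefore have
\begin{equation}
t + \delta\|u(t)\|_{\dot{H}^s}^{-2/(s-s_c)} \leq T,
\end{equation}
i.e. $\|u(t)\|_{\dot{H}^s}^{-2/(s-s_c)} \leq \delta^{-1}(T-t)$, which rearranges to
\begin{equation}
\|u(t)\|_{\dot{H}^s} \geq \delta^{(s-s_c)/2}\,(T-t)^{-(s-s_c)/2}.
\end{equation}

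There is no real obstacle here: the only subtle point is to ensure that the constant $\delta$ in the local existence time truly depends solely on universal quantities (and not, for instance, on some profile information about $u_0$), which is exactly what was established in the proof of Theorem \ref{HomTheo}(a) via the subcritical contraction argument. The use of uniqueness to glue $\tilde{u}$ with $u$ and contradict maximality of $T$ is then immediate.
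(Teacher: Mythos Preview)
Your argument is correct and is in fact the standard textbook derivation of a lower blow-up rate from an explicit local-existence time. The paper, however, does not quite argue this way: instead of invoking Theorem~\ref{HomTheo}(a) as a black box, it re-runs the Strichartz bootstrap directly. Assuming by contradiction that $(T-t_n)^{(s-s_c)/2}\|u(t_n)\|_{\dot H^s}<1/n$ along some sequence $t_n\nearrow T$, the paper uses the Duhamel formula and the nonlinear estimate \eqref{dual_grad_l_3} to obtain
\[
\|D^{s}u\|_{\tilde{S}(L^2,[t_n,t])} \le C_0\|u(t_n)\|_{\dot H^s} + \frac{C_0}{n^\alpha\|u(t_n)\|_{\dot H^s}^\alpha}\,\|D^{s}u\|_{S(L^2,[t_n,t])}^{\alpha+1},
\]
and then a continuity (barrier) argument on the polynomial $x \mapsto x - C_0\|u(t_n)\|_{\dot H^s} - C_0 n^{-\alpha}\|u(t_n)\|_{\dot H^s}^{-\alpha} x^{\alpha+1}$ shows that, for $n$ large, $\|D^s u\|_{S(L^2,[t_n,T))}$ stays bounded by a multiple of $\|u(t_n)\|_{\dot H^s}$, contradicting the blow-up alternative.

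The two arguments are morally equivalent --- the paper's bootstrap is precisely the mechanism behind the time bound $T=\delta\|u_0\|_{\dot H^s}^{-2/(s-s_c)}$ you invoke --- but your packaging is shorter and avoids repeating the continuity argument. The paper's version has the minor advantage of being self-contained at the level of Strichartz estimates (it does not need to appeal to uniqueness to glue solutions), whereas yours relies on the uniqueness statement in Theorem~\ref{HomTheo}(a), which is available here.
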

\begin{proof} We first observe that the local theory (in the subcritical case) implies
\begin{equation}\label{Alternative}
\|u(t)\|_{\dot{H}^s} \to \infty, \mbox{ as } t\nearrow T.
\end{equation}
Now, assume by contradiction that there exists a sequence $t_n\nearrow T$ such that, for all $n$,
\begin{equation}\label{bound_0}
    (T-t_n)^{\frac{s-s_c}{2}}\|u(t_n)\|_{\dot{H}^s} < \frac{1}{n}.
\end{equation}
From the Duhamel formula and arguing as in the proof of Theorem \ref{HomTheo} (a), for $t \in [t_n, T)$, there exists $C_0>0$ such that
\begin{equation}\label{ContArg}
    \|D^{s}u\|_{\tilde{S}(L^2,[t_n,t])} \leq  
    C_0\|u(t_n)\|_{\dot{H}^s} + \frac{C_0}{n^\alpha \|u(t_n)\|_{\dot{H}^s}^\alpha} \|D^{s}u\|_{{S}(L^2,[t_n,t])}^{\alpha+1}.
\end{equation}

Consider the function $f(x)=x-C_0\|u(t_n)\|_{\dot{H}^s} + \frac{C_0}{n^\alpha \|u(t_n)\|_{\dot{H}^s}^\alpha} x^{\alpha+1}$. A simple computation revels that it has a global maximum at $$x_n^{\ast}=\frac{n\|u(t_n)\|_{\dot{H}^s}}{[C_0(\alpha+1)]^{1/\alpha}},\quad f(x_n^{\ast})=\left(\frac{\alpha n}{(\alpha+1)[C_0(\alpha+1)]^{1/\alpha}}-C_0\right)\|u(t_n)\|_{\dot{H}^s}$$
 and $f(0)<0$. Thus, for $n_0> \frac{\left[C_0(\alpha+1)\right]^{\frac{\alpha+1}{\alpha}}}{\alpha}$ we have that $f(x_{n_0}^{\ast})>0$.

Setting $x(t)=\|D^{s}u\|_{{S}(L^2,[t_{n_0},t])}$, from \eqref{ContArg} we have that $f(x(t))\leq 0$, for all $t \in [t_{n_0}, T)$. A continuity argument then implies that $x(t)\leq x_{n_0}^{\ast}$ and therefore
\begin{equation}\label{bound_1}
     \|D^{s}u\|_{{S}(L^2,[t_{n_0},T))} \lesssim \|u(t_{n_0})\|_{\dot{H}^s}. 
\end{equation}
Estimate \eqref{ContArg} now implies $u \in L^{\infty}_t\dot{H}^s_x([0,T])$, which is a contradiction with \eqref{Alternative}.
    
\end{proof}

To prove Theorem \ref{HomTheo} (b), we rely on the following stability theory.

\begin{lem}[Critical stability] \label{Ls<1}
Let $0\leq s \leq 1$, $s < N/2$,  $\alpha = \frac{4-2b}{N-2s}$, $u_0 \in {H}^{s}$ and $\tilde{u}$ be a $C^0_tH^{s}_x$ solution to 
\begin{equation}
    i\partial \tilde{u} + \Delta \tilde{u} +\mu |x|^{-b}|\tilde{u}|^{\alpha}\tilde{u} = e,
\end{equation}
where $\mu=\pm 1$. Assume that the boundedness conditions
\begin{equation}
    \|D^{s}\tilde{u}\|_{\tilde{S}(L^2,I)} \leq E, \quad \|D^{s} e\|_{S'(L^2,I)} \leq E
\end{equation}
and the smallness conditions
\begin{equation}
    %\|\tilde{u}\|_{S(\dot{H}^{s_c},I)} \leq \epsilon, \quad
    \|e^{it\Delta}(u_0-\tilde{u}_0)\|_{S(\dot{H}^{s},I)} \leq \epsilon, \quad \|e\|_{S'(\dot{H}^{-s},I)} \leq \epsilon,
\end{equation}
hold for $0< \epsilon < \epsilon_0 = \epsilon_0(E)$. Then there is a solution $u$ to \eqref{INLS} with initial datum $u_0$ such that
\begin{equation}
    \|u-\tilde{u}\|_{S(\dot{H}^{s},I)} \lesssim_{E}\epsilon
\end{equation}
and
\begin{equation}
    \|D^{s} u\|_{\tilde{S}(L^2,I)} \lesssim_E 1.
\end{equation}
\end{lem}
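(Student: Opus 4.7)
The plan is to follow the standard perturbation argument in critical Strichartz spaces, based on decomposing the interval $I$ into subintervals on which $\tilde u$ has small Strichartz norm, and then iterating a contraction argument across subintervals. Since $\|D^s\tilde u\|_{\tilde S(L^2,I)}\le E$, by absolute continuity of the integral and the embedding of $\dot S^s(L^2,I)$ into $S(\dot H^s,I)$, one can partition $I=\bigcup_{j=0}^{J-1}I_j$ with $J=J(E,\eta)$, where on each $I_j=[t_j,t_{j+1}]$
$$
\|\tilde u\|_{S(\dot H^s,I_j)}+\|D^s\tilde u\|_{\tilde S(L^2,I_j)}\le\eta,
$$
for some small $\eta=\eta(E)>0$ to be fixed.

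Setting $w=u-\tilde u$, the difference satisfies
$$
w(t)=e^{i(t-t_j)\Delta}w(t_j)+i\mu\int_{t_j}^t e^{i(t-\tau)\Delta}\bigl[F(x,u)-F(x,\tilde u)\bigr]d\tau-i\int_{t_j}^t e^{i(t-\tau)\Delta}e(\tau)\,d\tau.
$$
On each $I_j$ I would apply the Kato--Strichartz estimates \eqref{KS1}, \eqref{KS2} together with the critical nonlinear difference estimates \eqref{dual_s1}, \eqref{dual_s_5} (when $\alpha\ge 1$) or \eqref{dual_s_6} (when $\alpha<1$) to obtain
$$
\|w\|_{S(\dot H^s,I_j)}\le C\bigl\|e^{i(t-t_j)\Delta}w(t_j)\bigr\|_{S(\dot H^s,I_j)}+C(\eta+\|w\|_{S(\dot H^s,I_j)})^{\alpha}\|w\|_{S(\dot H^s,I_j)}+C\epsilon,
$$
and a companion estimate controlling $\|D^s w\|_{\tilde S(L^2,I_j)}$ by the linear data $\|D^s w(t_j)\|_{L^2}$, the error $\|D^s e\|_{S'(L^2,I_j)}\le E$ (weighted by small factors of $\eta^\alpha$), and cross terms of the form $\eta^{\alpha-1}\|D^s\tilde u\|_{\tilde S(L^2,I_j)}\|w\|_{S(\dot H^s,I_j)}$. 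Choosing $\eta$ small depending on $C$ allows me to absorb all quadratic and higher-order self-interactions of $w$ into the left-hand side.

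Iterating across subintervals, I would exploit the identity
$$
e^{i(t-t_{j+1})\Delta}w(t_{j+1})=e^{i(t-t_0)\Delta}(u_0-\tilde u_0)+i\mu\int_{t_0}^{t_{j+1}}e^{i(t-\tau)\Delta}[F(x,u)-F(x,\tilde u)-e]\,d\tau,
$$
to propagate the smallness of the linear evolution of $w(t_j)$. The nonlinear estimates applied subinterval by subinterval yield a discrete Gronwall-type inequality for $\epsilon_j:=\|e^{i(t-t_j)\Delta}w(t_j)\|_{S(\dot H^s,I_j)}+\|D^s w(t_j)\|_{L^2}$, which grows at most geometrically with ratio depending on $\eta$ and $E$. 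Since $J=J(E)$ is finite, one obtains $\epsilon_j\lesssim_E\epsilon$ for every $j$, and summing the per-subinterval estimates produces the claimed global bounds on $\|u-\tilde u\|_{S(\dot H^s,I)}$ and $\|D^s u\|_{\tilde S(L^2,I)}$. The actual existence of $u$ on $I$ follows by combining these a priori bounds with the local well-posedness result of Theorem \ref{HomTheo}(b) applied with initial time $t_j$ for each $j$.

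The main obstacle I anticipate is the case $\alpha<1$, where estimate \eqref{dual_s_6} contains the Hölder-type term $\|u-\tilde u\|^\alpha_{S(\dot H^{s_c})}(\|u\|_{\dot S^{s_c}}+\|\tilde u\|_{\dot S^{s_c}})$ whose second factor is merely bounded by $E$, not by the smallness parameter $\eta$. To handle this, I would first run the argument for $\|w\|_{S(\dot H^s,I_j)}$ alone (where the nonlinear term reads $(\eta+\|w\|_{S(\dot H^s)})^\alpha\|w\|_{S(\dot H^s)}$) via a continuity/bootstrap argument in the length of $I_j$, obtaining a smallness bound $\|w\|_{S(\dot H^s,I_j)}\lesssim_E\epsilon_j$; only once this preliminary smallness is secured can the Hölder term be treated perturbatively when closing the $\dot S^s$ estimate. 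The careful bookkeeping of powers of $\epsilon_j$ through the iteration, together with the dependence of all constants only on $E$, is the delicate point of the argument.
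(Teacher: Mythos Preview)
Your overall strategy---partition $I$ into finitely many subintervals on which $\tilde u$ is Strichartz-small, run a perturbative argument on each, then iterate via Duhamel---is exactly what the paper does. Three points of correction are worth making.

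First, you cannot arrange $\|D^s\tilde u\|_{\tilde S(L^2,I_j)}\le\eta$: the $\tilde S$ norm includes $L^\infty_tL^2_x$, which is not small on short intervals. The paper makes only $\|\tilde u\|_{S(\dot H^s,I_j)}\sim\delta$ small, keeping $\|D^s\tilde u\|_{\tilde S(L^2,I_j)}\le E$; this is enough because the nonlinear estimate \eqref{dual_s2} gives the factor $\|\tilde u\|_{S(\dot H^s)}^\alpha$ in front of $\|D^s\tilde u\|_{S(L^2)}$.

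Second, your appeal to Theorem \ref{HomTheo}(b) for the existence of $u$ is circular: in the paper, that theorem is \emph{proved using} this lemma. The paper instead constructs $u$ directly, running a genuine contraction-mapping argument on each $I_j$ in the complete metric space
\[
F_j=\Bigl\{w:\ \|w\|_{S(\dot H^s,I_j)}\le 2\|e^{i(t-t_j)\Delta}w(t_j)\|_{S(\dot H^s,I_j)}+2C\epsilon,\ \|D^sw\|_{\tilde S(L^2,I_j)}\le 2C\|w(t_j)\|_{\dot H^s}+2CE\Bigr\}
\]
with distance $\rho_j(w_1,w_2)=\|w_1-w_2\|_{S(\dot H^s,I_j)}$.

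Third, your concern about the $\alpha<1$ H\"older term is unnecessary. Since the contraction is performed only in the weaker $S(\dot H^s)$ metric, one needs just the pointwise bound $||a|^\alpha a-|b|^\alpha b|\lesssim(|a|+|b|)^\alpha|a-b|$ together with \eqref{dual_s1}; the $D^s$-level difference estimates \eqref{dual_s_5}, \eqref{dual_s_6} never enter. The $D^s$ bound is obtained by estimating $D^sF(x,w+\tilde u)$ and $D^sF(x,\tilde u)$ \emph{separately} via \eqref{dual_s2}, rather than their difference.
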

\begin{proof}
Write $w = u - \tilde{u}$ and split $I$ in a finite number $N \sim  E/\delta$ of intervals $I_j = [t_j,t_{j+1}]$ such that $\|\tilde{u}\|_{S(\dot{H}^{s},I_j)} \sim \delta$ for all $j$. Let $$F_j = \left\{w: \|w\|_{S(\dot{H}^{s},I_j)} \leq 2 \|e^{i(t-t_j)\Delta} w(t_j)\|_{S(\dot{H}^{s},I_j)}+2C\epsilon, \|D^{s} w\|_{\tilde{S}(L^2,I_j)} \leq 2C \|w(t_j)\|_{\dot{H}^{s}}+2CE %\text{ for all } j 
\right\}$$ be the (complete) metric space with the metric

$$
\rho_j(w_1,w_2) = \|w_1-w_2\|_{S(\dot{H}^{s},I_j)}
$$
and define
\begin{align}
    \Phi_j(w) &= e^{i(t-t_j)\Delta}w(t_j) +i\mu \int_{t_j}^t e^{i(t-\tau  )\Delta} \left[|x|^{-b}|w+\tilde{u}|^{\alpha}(w(\tau)  +\tilde{u}(\tau)  )-|x|^{-b}|\tilde{u}|^{\alpha}\tilde{u}(\tau)  \right] \, d\tau  \\&\quad\quad- i\mu \int_{t_j}^t e^{i(t-\tau  )\Delta} e(\tau)   \, d\tau .
\end{align}
Then,
\begin{align}
    \|\Phi_j(w)\|_{S(\dot{H}^{s},I_j)} &\leq \|e^{i(t-t_j)\Delta}w(t_j)\|_{S(\dot{H}^{s},I_j)}\\
    &\quad+ C\left[\|w\|_{S(\dot{H}^{s},I_j)}+\|\tilde{u}\|_{S(\dot{H}^{s},I_j)}\right]^{\alpha}\|w\|_{S(\dot{H}^{s},I_j)}+C\epsilon \\
    &\leq 
    %\left\{1+[(2C)^{\frac{1}{\alpha}}6\epsilon + (2C)^{\frac{1}{\alpha}}\delta]^{\alpha}\right\}
    [1+o_{\delta,\epsilon}(1)]
    \|e^{it\Delta}w(t_j)\|_{S(\dot{H}^{s},I_j)}+
    %+\left\{1+[(2C)^{\frac{1}{\alpha}}6\epsilon + (2C)^{\frac{1}{\alpha}}\delta]^{\alpha}\right\}
    [1+o_{\delta,\epsilon}(1)]C\epsilon,\\
    %&\leq 2\|e^{it\Delta}w(t_j)\|_{S(\dot{H}^{s},I_j)} + 2C\epsilon,\\
    \|D^{s} \Phi_j(w)\|_{\tilde{S}(L^2,I_j)} &\leq C \|w(t_j)\|_{\dot{H}^{s}}
    +C\|w+\tilde{u}\|_{S(\dot{H}^{s},I_j)}^{\alpha}(\|D^{s}w\|_{S(L^2,I_j)}+\|D^{s}\tilde{u}\|_{S(L^2,I_j)})\\
    &\quad+C\|\tilde{u}\|_{S(\dot{H}^{s},I_j)}^{\alpha}\|D^{s}\tilde{u}\|_{S(L^2,I_j)}+CE\\
    %\leq C\left\{1+[(2C)^{\frac{1}{\alpha}}6\epsilon + (2C)^{\frac{1}{\alpha}}\delta]^{\alpha}\right\}
    &\leq[1+o_{\delta,\epsilon}(1)]C\|w(t_j)\|_{\dot{H}^{s}}+[1+o_{\delta,\epsilon}(1)]C
    %\left\{1+(2\epsilon)^{\alpha}+[(2C)^{\frac{1}{\alpha}}6\epsilon + (2C)^{\frac{1}{\alpha}}\delta]^{\alpha}\right\}
    E,\\
    \|\Phi_j(w_2)-\Phi_j(w_1)\|_{S(\dot{H}^{s},I_j)} &\leq C\left[\|w_2\|_{S(\dot{H}^{s},I_j)}^{\alpha}+\|w_1\|_{S(\dot{H}^{s},I_j)}^{\alpha}+\|\tilde{u}\|_{S(\dot{H}^{s},I_j)}^{\alpha}\right]\|w_2-w_1\|_{S(\dot{H}^{s},I_j)}\\
    &\leq 
    %\left[2C(6\epsilon)^{\alpha}+C\delta^{\alpha}\right]
    o_{\delta,\epsilon}(1)\|w_2-w_1\|_{S(\dot{H}^{s},I_j)}.
\end{align}
It is clear that we can impose $\delta$ and $\epsilon$ small enough, in such a way that both depend only on universal constants, concluding that each $\Phi_j$ is a contraction on $F_j$. By inductively constructing the solution on $I_0, I_1, \cdots, I_{N-1}$, we have a solution $w$ defined on $I$ such that, for all $j>0$,

\begin{align}
    \|D^{s} w\|_{\tilde{S}(L^2,I_j)} &\leq 2C \|w(t_j)\|_{\dot{H}^{s}} + 2CE,\label{recursion_1}\\
    \|w\|_{S(\dot{H}^{s},I_j)} &\leq 2\|e^{i(t-t_j)\Delta} w(t_j)\|_{S(\dot{H}^{s},I_j)}+2C\epsilon\label{recursion_2}.
\end{align}
Noting that $\|w(t_j)\|_{\dot{H}^{s}} \leq \|D^{s} w\|_{\tilde{S}(L^2,I_{j-1})}$, we get, by \eqref{recursion_1}, $\|D^{s} w\|_{\tilde{S}(L^2,I)}\lesssim C^N E \sim C^{\frac{E}{\delta}}E$. Now, observing, by Duhamel, that

\begin{align}
    e^{i(t-t_{j})\Delta } w(t_j)&= e^{i(t-t_{j-1}\Delta)}w(t_{j-1})+i\mu\int_{t_{j-1}}^{t_j}e^{i(t-\tau  )\Delta}\left[|x|^{-b}|w+\tilde{u}|^{\alpha}(w(\tau)  +\tilde{u}(\tau)  )\hspace{-1pt}-\hspace{-1pt}|x|^{-b}|\tilde{u}|^{\alpha}\tilde{u}(\tau)  \right] \, d\tau  \\
    &\quad\quad- i\mu \int_{t_{j-1}}^{t_j} e^{i(t-\tau  )\Delta} e(\tau)   \, d\tau ,
\end{align}
we get
\begin{equation}
    \|e^{i(t-t_{j})\Delta } w(t_j)\|_{S(\dot{H}^{s})} \leq 2\|e^{i(t-t_{j-1})\Delta } w(t_{j-1})\|_{S(\dot{H}^{s})}+2C\epsilon.
\end{equation}
Therefore, by \eqref{recursion_2}, 
\begin{equation}
    \|w\|_{S(\dot{H}^{s},I)} \lesssim 2^N C \epsilon \sim 2^{\frac{E}{\delta}} C \epsilon.
\end{equation}
\end{proof}

\begin{proof}[Proof of Theorem \ref{HomTheo}-(b)] Let $\{u_{0,n}\}_n$ be a sequence of functions in $H^s$ such that $\|u_{n,0}-u_0\|_{\dot{H}^s} \to 0$ as $n \to +\infty$. Let $T_0>0$ be such that 
\begin{equation}\label{small_linear_evol}
   { \|e^{it\Delta}u_0\|_{S(\dot{H}^s,[0,T_0])}<\delta_0},
\end{equation}
for a small $\delta_0>0$ to be chosen later. Note that, if $n$ is large enough, then
\begin{equation}
    \|e^{it\Delta}u_{n,0}\|_{S(\dot{H}^s,[0,T_0])}\lesssim \delta_0.
\end{equation}

By Duhamel, Strichartz and Lemma \ref{lem_guz3}, one then has, for all $t \in [0,T_0]$:

\begin{equation}
    \|u_n\|_{S(\dot{H}^s,[0,t])} \lesssim \delta_0 + \|u_n\|_{S(\dot{H}^s,[0,t])}^{\alpha+1}.
\end{equation}

This bootstraps to $\|u_n\|_{S(\dot{H}^s,[0,T_0])} \lesssim \delta_0$, if $\delta_0$ is small (depending only on universal constants). We then have
\begin{equation}
   { \|D^su_n\|_{\tilde{S}(L^2,[0,T_0])} 
   %+ \|D^s u_n\|_{L^\infty_tL^2_x([0,T_0])} 
   \lesssim \|u_0\|_{\dot{H}^s}+\|u_n\|_{S(\dot{H}^s,[0,T_0])}^\alpha\|D^su_n\|_{\tilde{S}(L^2,[0,T_0])},}
\end{equation}
which finally implies $\|D^su_n\|_{\tilde{S}(L^2,[0,T_0])} \lesssim \|u_0\|_{\dot{H}^s}$.

Thus, we can employ Lemma \ref{Ls<1} (with $e \equiv 0$) to guarantee that the sequence of solutions $\{u_n\}_n$ is Cauchy in the norm $\| \cdot \|_{S(\dot{H}^s,[0,T_0])}$, therefore converging to, say, $u$. By reflexivity, uniqueness of strong and weak limits and lower semicontinuity of the norm, passing to a subsequence, we can assume that $u \in L^{\infty}_t\dot{H}^s_x([0,T_0]) \cap S^s(L^2,[0,T_0])$. Strichartz estimates again let us write, for $[t_1,t_2]\subset [0,T_0]$:
\begin{equation}
    \|u(t_2)-u(t_1)\|_{\dot{H}^s}\lesssim \|[e^{i(t_2-t_1)\Delta}-I]u_0\|_{\dot{H}^s}+\|u\|_{S(\dot{H}^s,[t_1,t_2])}^{\alpha}\|D^su\|_{S(L^2,[0,T_0])}, 
\end{equation}
which shows that $u \in C^{0}_t\dot{H}^s_x([0,T_0])$. Uniqueness also follows from Strichartz: if $u$ and $v$ satisfy the Duhamel formula with initial datum $u_0$, then \eqref{small_linear_evol}, Strichartz and Lemma \ref{lem_guz3} imply
\begin{equation}
    \|u\|_{S(\dot{H}^s,[0,T_0])}+\|v\|_{S(\dot{H}^s,[0,T_0])} \lesssim \delta_0.
\end{equation}
Estimating the difference, we get
\begin{equation}
    \|u-v\|_{S(\dot{H}^s,[0,T_0])} \lesssim o_{\delta_0}(1) \|u-v\|_{S(\dot{H}^s,[0,T_0])},
\end{equation}
which implies $u \equiv v$. Similarly to the well-posedness result in $H^s$, we remark that if 
\begin{equation}\label{small_data_dot}
   { \|e^{it\Delta}u_0\|_{S(\dot{H}^s,\mathbb{R})}<\delta_0},
\end{equation}
then the solution is defined for all $t \in \mathbb R$. 

{The continuous dependence on the initial data also relies on the stability theory. Indeed, let $E>0$ and $u_0 \in \dot H^s$ be such that $\|u_0\|_{\dot H^s} \leq E$. If $T_0$ is chosen so that \eqref{small_linear_evol} holds for $\delta_0(E) >0$ small enough, and $v_0 \in \dot H^s$ is such that $\|u_0-v_0\|_{\dot H^s} < \epsilon \ll_{E} \delta_0$, then the corresponding solutions $u$ and $v$ satisfy, by Lemma \ref{Ls<1},
\begin{align}
    \|D^s u\|_{S(L^2,[0,T_0])} + \|D^s v\|_{S(L^2,[0,T_0])} &\lesssim_E 1,\\
    \|u\|_{S(\dot H ^s,[0,T_0])} + \|v\|_{S(\dot H ^s,[0,T_0])} &\lesssim_E \delta_0
    %\|e^{it\Delta}u_0\|_{S(\dot H ^s)}
    \\
    \|u-v\|_{S(\dot H ^s,[0,T_0])} \lesssim_E \epsilon. 
\end{align}

We can use Duhamel and Strichartz, together with Lemma \ref{lem_guz3} to write, in the case $\alpha \geq 1$,
\begin{align}
    \|D^{s}(u-v)\|_{\tilde S (L^2,[0,T_0])} \lesssim_E \epsilon +  (\delta_0^\alpha + \delta_0^{\alpha-1})   \|D^{s}(u-v)\|_{\tilde S (L^2,[0,T_0])}, 
\end{align}
which implies $\|D^{s}(u-v)\|_{\tilde S (L^2,[0,T_0])} \lesssim_E \epsilon$. In the case $\alpha < 1$, the estimate is 
\begin{equation}
    \|D^{s}(u-v)\|_{\tilde S (L^2,[0,T_0])} \lesssim_E \epsilon +  \delta_0^\alpha   \|D^{s}(u-v)\|_{\tilde S (L^2,[0,T_0])} + \epsilon^{\alpha}, 
\end{equation}
which, in turn, readily implies
    \begin{equation}
    \|D^{s}(u-v)\|_{\tilde S (L^2,[0,T_0])} \lesssim_E \epsilon + \epsilon^{\alpha}. 
\end{equation}
}
\end{proof}

\section{Ill-posedness}\label{Ill}

We now turn to the proof of Theorem \ref{thm:illposed}. We will prove that, if $u=e^{it\Delta}u_0$, $u_0$ smooth, the integral term in Duhamel's formula becomes singular at the $H^s$-level. In order to extract the precise singular behavior, the main idea is to use Lipschitz regularity in $x$ to replace $|u(t,x)|^\alpha u(t,x)$ by $|u(t,0)|^\alpha u(t,0)$. The resulting Duhamel term then involves the linear evolution of the weight $|x|^{-b-s}$, which can be determined (almost) explicitly.

More precisely, set $f=|u|^\alpha u$. We decompose
\begin{align*}
D^s\int_0^t e^{i(t-\tau  )\Delta}|x|^{-b}f(\tau  ,x)d\tau  &= \int_0^t e^{i(t-\tau  )\Delta}\left[D^s\left(|x|^{-b}f(\tau  ,x)\right) - \left(D^s|x|^{-b}\right)f(\tau  ,x)\right]d\tau  \\&+ \int_0^t e^{i(t-\tau  )\Delta}\left[\left(D^s|x|^{-b}\right)\left(f(\tau  ,x)-f(\tau  ,0)\right)\right]d\tau  \\&+ \int_0^te^{i(t-\tau  )\Delta}\left(D^s|x|^{-b}\right)f(\tau  ,0)d\tau  =: \mathbf{I} + \mathbf{II} + \mathbf{III}.
\end{align*}

Throughout this section, let $s_0\gg \max \{N/2+1,N\}$ be fixed. We begin by recalling the following classical result (see H\"{o}rmander \cite{hormander}*{Chapter 7}):
\begin{lem}\label{lem:hormander}
	If $0<b<N$ and $s>0$ satisfies $b+s - N\notin \mathbb{N}\cup\{0\}$, then
	$$
	D^s|x|^{-b}=c_{N,b,s}|x|^{-b-s} \,\, \textrm{in} \,\, \mathcal{S}'(\mathbb{R}^N).
	$$
\end{lem}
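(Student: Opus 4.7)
The plan is to verify the claimed identity on the Fourier side, since $D^s=(-\Delta)^{s/2}$ is by definition the Fourier multiplier with symbol $|\xi|^s$. After Fourier transform, the identity becomes $|\xi|^s\,\widehat{|x|^{-b}}=c_{N,b,s}\,\widehat{|x|^{-b-s}}$, which is a purely computational statement about the homogeneous distributions $|x|^\lambda$. I would handle this in the framework of the meromorphic family $\lambda\mapsto|\cdot|^\lambda\in\mathcal{S}'(\mathbb{R}^N)$, as developed in H\"ormander \cite{hormander}*{Chapter 7}.

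Because $0<b<N$, the function $|x|^{-b}$ is locally integrable and polynomially bounded, hence a tempered distribution, and the classical Riesz potential formula gives
$$
\widehat{|x|^{-b}}=C_1(N,b)\,|\xi|^{-(N-b)}
$$
for an explicit, finite, nonzero constant $C_1(N,b)$ expressible through Gamma functions. Multiplication by $|\xi|^s$ yields $C_1(N,b)\,|\xi|^{-(N-b-s)}$. I would then invert this using the general Fourier identity $\widehat{|x|^{-\mu}}=C_2(N,\mu)\,|\xi|^{-(N-\mu)}$, valid as a distributional identity via analytic continuation in $\mu$, away from the discrete set where the Gamma factors are singular. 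Applied with $\mu=b+s$ this gives $\widehat{|x|^{-b-s}}=C_2(N,b+s)\,|\xi|^{-(N-b-s)}$, and comparison of the two expressions identifies $c_{N,b,s}=C_1(N,b)/C_2(N,b+s)$, which is a nonzero finite constant.

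The only delicate point — and the main obstacle — is giving rigorous meaning to $|x|^{-b-s}$ when $b+s\ge N$, where it is no longer locally integrable. One must interpret it as the value at $\lambda=-(b+s)$ of the analytic continuation of $\lambda\mapsto|x|^\lambda$ into $\mathcal{S}'(\mathbb{R}^N)$. The Gamma ratio entering the Fourier transform of $|x|^\lambda$ has both poles and zeros at specific integer values of $\lambda$, and the hypothesis $b+s-N\notin\mathbb{N}\cup\{0\}$ is precisely what excludes these exceptional points: it guarantees both that the distribution $|x|^{-b-s}$ is regular (no Dirac-mass or derivative-of-Dirac contributions) and that $c_{N,b,s}$ is finite and nonzero, with no spurious logarithmic corrections (cf.\ the analogous discussion in \cite{gelfandshilov}*{Section III.3.2}). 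Once this distributional framework is in place, the identity reduces to bookkeeping of Gamma functions.
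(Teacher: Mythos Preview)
Your approach is correct and is precisely the standard argument one finds in the cited reference. Note, however, that the paper does not actually prove this lemma: it is stated as a classical result with a citation to H\"ormander \cite{hormander}*{Chapter 7}, and no proof is given. Your Fourier-side computation via the Riesz potential formula and analytic continuation of $\lambda\mapsto |x|^\lambda$ is exactly the textbook derivation, so there is nothing to compare against beyond the citation itself.

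One small point worth being careful about if you write this out in full: the product $|\xi|^s\cdot|\xi|^{-(N-b)}$ is not automatically well-defined as a product of tempered distributions, since $|\xi|^s$ is not smooth at the origin. In your setting this causes no trouble because $|\xi|^{-(N-b)}$ is a locally integrable function (as $0<N-b<N$), so the pointwise product $|\xi|^{b+s-N}$ is itself locally integrable (since $b+s>0$) and hence a bona fide tempered distribution; but it is worth saying explicitly rather than leaving implicit.
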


\begin{lem}\label{lem:controlo_II}
	If $b+s<2+N/2$,
	$$
	\|\mathbf{II}\|_{S'(L^2, I)}\lesssim_{|I|} \|u\|_{L^\infty(I;H^{s_0})}^{\alpha+1}.
	$$
\end{lem}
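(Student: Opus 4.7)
The plan is to use Lemma~\ref{lem:hormander} to replace $D^s|x|^{-b}$ by the pointwise expression $c_{N,b,s}|x|^{-b-s}$ inside the integrand of $\mathbf{II}$ (the hypothesis $b+s\notin\mathbb{N}$ when $b+s\ge N$ is exactly what that lemma requires) and then exploit the vanishing of $f(\tau,x)-f(\tau,0)$ at the origin to absorb the resulting singularity. Since $u\in L^\infty(I;H^{s_0})$ with $s_0\gg N/2+1$, Sobolev embedding combined with $\alpha\in 2\mathbb{N}$ (so that $f=|u|^\alpha u$ is polynomial in $u$ and $\bar u$) gives the pointwise controls $\|f(\tau)\|_{L^\infty}+\|\nabla f(\tau)\|_{L^\infty}\lesssim \|u(\tau)\|_{H^{s_0}}^{\alpha+1}$, so that
$$
\bigl||x|^{-b-s}\bigl(f(\tau,x)-f(\tau,0)\bigr)\bigr|\lesssim \min\bigl(|x|^{1-b-s},|x|^{-b-s}\bigr)\,\|u(\tau)\|_{H^{s_0}}^{\alpha+1}.
$$

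Next I would choose a spatial exponent $r'<2$ for which this pointwise estimate is integrable both on $\{|x|\le 1\}$ (which requires $r'(b+s-1)<N$) and on $\{|x|\ge 1\}$ (which requires $r'(b+s)>N$). The key observation is that the hypothesis $b+s<2+N/2$ translates exactly into $N/(b+s-1)>2N/(N+2)$, so the two integrability constraints, coupled with the dual-admissibility window $r'\in[2N/(N+2),2)$ coming from $(q,r)=(q,r'/(r'-1))\in\mathcal{A}_0$, admit a non-empty range provided $b+s>N/2$ — and the latter is built into the ill-posedness regime $b+s\ge \min\{N,N/2+1\}$. With such an $r'$ fixed one obtains
$$
\bigl\||x|^{-b-s}\bigl(f(\tau,\cdot)-f(\tau,0)\bigr)\bigr\|_{L^{r'}_x}\lesssim \|u(\tau)\|_{H^{s_0}}^{\alpha+1}
$$
uniformly in $\tau\in I$.

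The conclusion then follows from the inhomogeneous Kato--Strichartz estimate \eqref{KS1} applied to $\mathbf{II}$ with source in $L^{q'}_tL^{r'}_x(I)$, the $|I|$-dependence being absorbed by H\"older in time via $\|\cdot\|_{L^{q'}_t(I)}\le |I|^{1/q'}\|\cdot\|_{L^\infty_t(I)}$. The main obstacle I foresee is the bookkeeping for the admissibility window: checking that for every $(b,s,N)$ compatible with the theorem's hypotheses there exists a genuine admissible pair realizing the two integrability constraints above. This is a delicate balance of strict inequalities which is saturated precisely at $b+s=2+N/2$, and it is in this sense that the hypothesis $b+s<2+N/2$ is sharp for the present argument.
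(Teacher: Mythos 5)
Your argument is correct and follows essentially the same route as the paper: identify $D^s|x|^{-b}$ with $c_{N,b,s}|x|^{-b-s}$ via Lemma \ref{lem:hormander}, gain a factor $|x|$ near the origin from the Lipschitz bound on $f$, use boundedness of $f$ away from the origin, and close with the dual Strichartz estimate \eqref{KS1} plus H\"older in time. The only cosmetic difference is that the paper splits into $B_1(0)$ and its complement and estimates the two pieces with two different dual pairs ($r'$ near $2N/(N+2)$ on the ball, $L^1_tL^2_x$ outside), whereas you use a single exponent $r'\in\left(\max\{2N/(N+2),N/(b+s)\},\min\{2,N/(b+s-1)\}\right)$ together with the $\min$ bound; both choices are available precisely because $N/2<b+s<N/2+2$ in the regime under consideration.
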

\begin{proof}
	We split the estimate in the regions $B=B_1(0)$ and $B^c$. 

\noindent	\textit{Step 1. Control near the origin.} In $B$, we make use of the Lipschitz estimate
\begin{equation}
	|f(t  ,x)-f(t  ,0)|\lesssim |x|\|f(t)\|_{W^{1,\frac{N}{2}^+}_x}.
\end{equation}
For $(q,r)$ admissible with $r$ close to $2N/(N-2)^+$,
\begin{align*}
	\left\|\int_0^t e^{i(t-\tau  )\Delta}\left[\left(D^s|x|^{-b}\right)\left(f(\tau  ,x)-f(\tau  ,0)\right)\mathbbm{1}_{B}\right]d\tau \right\|_{\tilde{S}(L^2, I)} &\lesssim \|(D^s|x|^{-b})(f(t  ,x)-f(t  ,0))\|_{L^{q'}_tL_x^{r'}(B)} \\&\lesssim \||x|^{-b-s+1}\|f(t)\|_{W^{1,\frac{N}{2}^+}_x}\|_{L^{r'}_x(B)}\\&\lesssim\||x|^{-b-s+1}\|_{L^{r'}_x(B)}\|u\|_{L^{\infty}_tH^{s_0}_x}^{\alpha+1} \\&\lesssim \|u\|_{L^{\infty}_tH^{s_0}_x}^{\alpha+1}.
\end{align*}
Notice that the integrability of $|x|^{-b-s+1}$ is ensured by the condition $b+s<2+N/2$.

\noindent\textit{Step 2. Control away from the origin.} Since $b+s>N/2$,
\begin{align*}
	\left\|\int_0^t e^{i(t-\tau  )\Delta}\left[\left(D^s|x|^{-b}\right)\left(f(\tau  ,x)-f(\tau  ,0)\right)\mathbbm{1}_{B^c}\right]d\tau \right\|_{\tilde{S}(L^2, I)} &\lesssim \||x|^{-b-s}\|f(t)\|_{L^\infty_x}\|_{L^{1}_tL^{2}_x(B^c)}\\&\lesssim\||x|^{-b-s}\|_{L^2_x(B^c)}\|f\|_{L^1_tL^\infty_x} \\&\lesssim \|u\|_{L^{\infty}_tH_x^{s_0}}^{\alpha+1}.
\end{align*}
\end{proof}

\begin{lem}\label{lem:controlo_I}
	If $b<\min\{N,2\}$,
	$$
	\|\mathbf{I}\|_{S'(L^2, I)}\lesssim_{|I|} \|u\|_{L^\infty(I;H^{s_0})}^{\alpha+1}.
	$$
\end{lem}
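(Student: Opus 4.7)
The plan is to apply the Kato-Strichartz estimate \eqref{KS1} to reduce the control of $\mathbf{I}$ to a spatial estimate on the integrand
\begin{equation*}
g := D^s(|x|^{-b}f) - (D^s|x|^{-b})f,
\end{equation*}
which is the commutator obtained by subtracting precisely the most singular contribution $(D^s|x|^{-b})f \sim |x|^{-b-s}f$. For an $L^2$-admissible pair $(q,r)$ with $r$ chosen slightly above $2$, one gets
\begin{equation*}
\|\mathbf{I}\|_{\tilde{S}(L^2,I)} \lesssim \|g\|_{L^{q'}_tL^{r'}_x(I)},
\end{equation*}
so the effective weight left to control is only $|x|^{-b}$, which is locally integrable since $b<N$.

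Next, I would invoke Lemma \ref{gen_leib_1} in the roles $f \leftrightarrow |x|^{-b}$, $g \leftrightarrow f$ to obtain the pointwise identity $g = |x|^{-b}(D^s f) + T(|x|^{-b},D^s f)$. Applying the lemma with the splitting set $A = B := B_1(0)$, and bounding the product term $|x|^{-b}(D^s f)$ by H\"older with the same splitting, both contributions to $\|g\|_{L^{r'}_x}$ are controlled by
\begin{equation*}
\||x|^{-b}\|_{L^{p_1}(B)}\|D^s f\|_{L^{q_1}_x} + \||x|^{-b}\|_{L^{p_2}(B^c)}\|D^s f\|_{L^{q_2}_x},
\end{equation*}
with $\tfrac{1}{p_i} + \tfrac{1}{q_i} = \tfrac{1}{r'}$. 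The hypothesis $0<b<N$ permits a choice of $p_1 \in (1,N/b)$ and $p_2 \in (N/b,\infty)$ that renders both weight norms finite. Since $u \in L^\infty(I;H^{s_0})$ with $s_0 \gg 1$, the fractional chain rules of Lemmas \ref{chain_2}--\ref{chain_3} together with Sobolev embedding yield $\|D^s f\|_{L^{q_i}_x} \lesssim \|u\|_{H^{s_0}}^{\alpha+1}$ for arbitrary $q_i \in (1,\infty)$. Integrating in time produces the implicit factor $|I|^{1/q'}$, which is absorbed in the constant $\lesssim_{|I|}$.

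For $s \geq 1$, I would iterate the argument by writing $D^s \sim D^{s-1}\nabla$ and expanding $\nabla(|x|^{-b}f)$ via the classical product rule $\partial^\alpha(|x|^{-b}f) = \sum_{\alpha_1+\alpha_2 = \alpha}\partial^{\alpha_1}(|x|^{-b})\,\partial^{\alpha_2}f$; every term with at least one integer derivative on $f$ is treated directly by H\"older, while the single remaining fractional factor can always be handled by Lemma \ref{gen_leib_1} as above. The restriction $b<2$ in the statement ensures that each integer derivative falling on the weight produces a power $|x|^{-b-k}$ whose local integrability is preserved under the same splitting.

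The main obstacle I anticipate is the simultaneous compatibility of the Strichartz-admissible pair $(q,r)$ with the H\"older exponents dictated by the weight decomposition, in particular keeping $r'$ clear of the critical value $N/b$; since the regularity budget $s_0$ may be taken arbitrarily large (providing any $L^{q_i}_x$ control of $D^s f$ one desires), the Sobolev step places no further restriction on $q_1,q_2$, and the argument should close provided $(q,r)$ is picked with $r$ slightly above $2$.
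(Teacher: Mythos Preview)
Your overall approach matches the paper's: reduce via Strichartz to a dual-norm estimate on $g = D^s(|x|^{-b}f) - (D^s|x|^{-b})f$, apply Lemma~\ref{gen_leib_1} with the split $A = B_1(0)$, handle the leftover product $|x|^{-b}D^sf$ by H\"older with the same split, and control $\|D^sf\|_{L^{q_i}}$ through the large regularity budget $s_0$.

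The gap is in your choice of admissible pair. The near-origin H\"older factor requires $p_1 < N/b$ together with $1/p_1 + 1/q_1 = 1/r'$ and $q_1 \in (1,\infty]$; combining these forces $r' < N/b$, equivalently $r > N/(N-b)$. When $b > N/2$---which is permitted by the hypothesis $b < \min\{N,2\}$ in dimensions $N \le 3$---one has $N/(N-b) > 2$, so taking $r$ slightly above $2$ yields $r' \approx 2 > N/b$ and there is no valid $p_1$. The paper instead chooses $r$ close to $2N/(N-2)$ (or arbitrarily large when $N \le 2$): the hypothesis $b < 2$ then gives $N/(N-b) < 2N/(N-2)$ for $N \ge 3$, so such $r$ is simultaneously $L^2$-admissible and satisfies $r' < N/b$. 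This is precisely where the restriction $b < 2$ enters the argument, not in the integer-derivative iteration you propose for $s \ge 1$.
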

\begin{proof}
	For $r$ close to $2N/(N-2)^+$, take $r_2$ such that
	$$
	\frac{1}{r'}=\frac{b}{N}+\frac{1}{r_2}
	$$
	(observe that $(N+2)/2N>b/N$, which ensures that $r_2\in(1,\infty)$).
	Applying Lemma \ref{gen_leib_1},
	\begin{align*}
		\|D^s(|x|^{-b}f)-(D^s|x|^{-b})f\|_{L^{r'}_x}&\lesssim \||x|^{-b}\|_{L^{(N/b)^-}(B_1)}\|D^s f\|_{L^{r_2^+}} +\||x|^{-b}\|_{L^{(N/b)^+}(B_1^c)}\|D^s f\|_{L^{r_2^-}} \\&\lesssim \|u\|_{L^{\infty}_tH_x^{s_0}}^{\alpha+1}.
	\end{align*}
 Therefore, by Strichartz estimates,
$$
	\|\mathbf{I}\|_{S'(L^2, I)} \lesssim 	\|D^s(|x|^{-b}f)-(D^s|x|^{-b})f\|_{L^{q'}_tL^{r'}_x} \lesssim \|u\|_{L^{\infty}_tH_x^{s_0}}^{\alpha+1}.
$$ 
\end{proof}

Now that we have reduced the analysis to the term $\mathbf{III}$, define
$$
(Tf)(t,x)=\int_0^t f(t-\tau  )e^{i\tau\Delta}D^s|x|^{-b}d\tau,
$$
so that
$$
\mathbf{III}=T\left(|u|^\alpha u\Big|_{x=0}\right).
$$
%If $b+s\neq N + k,\ k\in\mathbb{N}_0$, $D^s|x|^{-b}=c_{b,s,N}|x|^{-b-s}$ (see Lemma \ref{lem:extensaoH} below) and thus
By Lemma \ref{lem:hormander},
$$
(Tf)(t,x)=c_{b,s,N}\int_0^t f(t-\tau  )e^{i\tau\Delta}|x|^{-b-s}d\tau.
$$
%\begin{re}
%	Fix $u\in \tilde{S}(L^2, I)$ with $D^su\in \tilde{S}(L^2, I)$. Notice that, over any bounded time interval,
%	$$
%	\|(|u|^\alpha u)(\cdot,0)\|_{L^{q}_t} \lesssim \|u\|_{L^{q(\alpha+1)}_tL^\infty_x}^{\alpha+1}\lesssim \|u\|_{L^{q(\alpha+1)}_tW^{s,N^{+}/s}_x}^{\alpha+1} .
%	$$
%	which means that $(|u|^\alpha u)(\cdot,0)\in L^q_t$ if
%	$$
%	(\alpha+1)q\le q_0,\quad \mbox{for }(q_0,N^+/s)\mbox{ admissible.}
%	$$
%	This holds iff
%\begin{equation}\label{eq:cond_q}
%	q<\frac{4}{(N-2s)(\alpha+1)}.
%\end{equation}
%\end{re}
%\vskip10pt
To derive the precise asymptotics for $T$, we need an accurate description of $e^{it\Delta}|x|^{-b-s}$. For the rest of this section, we set
\begin{equation}
	\theta=\frac{b+s}{2}, \quad \beta=\frac{N-b-s}{2}.
\end{equation}

We start with the explicitly expression obtained by Cazenave \cite{cazenave} in the case $0<b+s<N$.

\begin{lem}[{\cite{cazenave}*{Lemmas 2.6.2 and 2.6.4}}]\label{lem:cazenave}
For $0<b+s<N$,
\begin{equation}
	e^{it\Delta}|x|^{-b-s}=\frac{1}{(4it)^{\frac{b+s}{2}}}\frac{1}{\Gamma(\theta)}H\left(\frac{|x|^2}{4t}\right),
\end{equation}
where $H\in C^\infty(\mathbb{R}^N)$ is defined as
\begin{equation}
	H(y;\theta,\beta)=\int_0^1e^{iyr}r^{\theta-1}(1-r)^{\beta-1}dr.
\end{equation}
Furthermore, one has the following asymptotics for $H$
\begin{equation}\label{eq:Hasympt}
	H(y;\theta,\beta)= \Gamma(\theta)e^{i\frac{\pi\theta}{2}} y^{-\theta} + \Gamma(\beta)e^{-i\frac{\pi\beta}{2}}e^{iy}y^{-\beta} + O(y^{-\theta-1}) + O(y^{-\beta-1})\quad \mbox{as }y\to \infty.
\end{equation}
\end{lem}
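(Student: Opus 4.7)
The plan is to combine the Gaussian subordination identity
\[
	|x|^{-\lambda}=\frac{1}{\Gamma(\theta)}\int_0^\infty \sigma^{\theta-1}e^{-\sigma|x|^2}\,d\sigma,\qquad \lambda=b+s,\ \theta=\lambda/2,
\]
valid in $\mathcal{S}'(\mathbb{R}^N)$ for $0<\lambda<N$, with the explicit formula for the propagation of a Gaussian under the Schrödinger group,
\[
	e^{it\Delta}\left(e^{-\sigma|\cdot|^2}\right)(x)=\frac{1}{(1+4i\sigma t)^{N/2}}\exp\left(-\frac{\sigma|x|^2}{1+4i\sigma t}\right),
\]
which one can obtain either by direct computation in Fourier variables or by analytic continuation of the heat-kernel formula. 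After exchanging the order of integration (easy to justify by a standard regularization argument), one arrives at the single integral representation
\[
	e^{it\Delta}|x|^{-\lambda}(x)=\frac{1}{\Gamma(\theta)}\int_0^\infty \frac{\sigma^{\theta-1}}{(1+4i\sigma t)^{N/2}}\exp\left(-\frac{\sigma|x|^2}{1+4i\sigma t}\right)d\sigma.
\]

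The next step is the change of variables $r=4i\sigma t/(1+4i\sigma t)$, which traces a curve in $\mathbb{C}$ from $0$ to $1$ as $\sigma$ ranges over $(0,\infty)$. Cauchy's theorem then allows me to deform this contour onto the real segment $[0,1]$, the integrand being holomorphic in the relevant sector and the convergence at both endpoints being guaranteed by $0<\lambda<N$, i.e.\ $\theta>0$ and $\beta>0$. Using the identities $1+4i\sigma t=(1-r)^{-1}$, $-\sigma/(1+4i\sigma t)=ir/(4t)$ and $d\sigma=(4it(1-r)^2)^{-1}dr$, routine algebra then produces
\[
	e^{it\Delta}|x|^{-\lambda}(x)=\frac{1}{(4it)^\theta\,\Gamma(\theta)}\int_0^1 e^{iry}r^{\theta-1}(1-r)^{\beta-1}\,dr,\qquad y=\frac{|x|^2}{4t},
\]
which is exactly the claimed representation.

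For the asymptotic expansion of $H(y;\theta,\beta)$ as $y\to\infty$, I would apply an endpoint version of Watson's lemma. Splitting $H=\int_0^{1/2}+\int_{1/2}^1$ and Taylor-expanding the regular factor at each endpoint, the first piece reduces, up to an $O(y^{-\theta-1})$ remainder, to
\[
	\int_0^\infty e^{iyr}r^{\theta-1}\,dr=\Gamma(\theta)e^{i\pi\theta/2}y^{-\theta},
\]
the equality being obtained by rotating the integration contour to the positive imaginary axis and using the definition of the Gamma function. The analogous analysis near $r=1$, performed after substituting $r=1-u$ and factoring out $e^{iy}$, produces the second leading term $\Gamma(\beta)e^{-i\pi\beta/2}e^{iy}y^{-\beta}$ together with an $O(y^{-\beta-1})$ error.

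The main technical obstacle in both steps is the careful tracking of branch cuts and complex phases: the substitution $r=4i\sigma t/(1+4i\sigma t)$ requires a non-trivial contour deformation in $\mathbb{C}$, and the specific phases $e^{\pm i\pi\theta/2}$, $e^{\mp i\pi\beta/2}$ in the asymptotic expansion arise precisely from the branch choices made when rotating the contours at the two endpoints. Since the result is purely classical and appears explicitly in Cazenave's monograph (Lemmas 2.6.2 and 2.6.4 in \cite{cazenave}), the final write-up would simply sketch the argument and defer the detailed verification to that reference.
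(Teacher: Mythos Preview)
The paper does not supply its own proof of this lemma: it is quoted verbatim as a citation of \cite{cazenave}*{Lemmas 2.6.2 and 2.6.4}, with no argument given in the text. Your sketch---Gaussian subordination of $|x|^{-\lambda}$, explicit propagation of Gaussians under $e^{it\Delta}$, the M\"obius substitution $r=4i\sigma t/(1+4i\sigma t)$ followed by contour deformation, and then an endpoint stationary-phase/Watson analysis for the asymptotics of $H$---is exactly the argument in Cazenave's monograph, and the algebra you wrote out (in particular $N/2-\theta-1=\beta-1$ for the exponent of $(1-r)$) checks. So there is nothing to compare: your proposal reproduces the cited proof, and your own closing paragraph already anticipates that the correct move here is to point to \cite{cazenave} rather than to reprove it.
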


For the case $b+s>N$, we need to suitably extend the formula above.
\begin{lem}\label{lem:extensaoH}
	Given $\max\{N,2\}<b+s<N+2$ such that $b+s\neq N+1$, one has
	\begin{equation}\label{eq:extensaoH}
		e^{it\Delta}|x|^{-b-s}=\frac{1}{\beta(4it)^{\frac{b+s}{2}}\Gamma((b+s)/2)}\left[(\theta-1)H\left(\frac{|x|^2}{t};\theta-1,\beta+1\right)+\frac{i|x|^2}{4t}H\left(\frac{|x|^2}{t},\theta,\beta+1\right)\right].
	\end{equation}
	As a consequence, for $|x|^2\gg t$,
	\begin{equation}\label{eq:asymptmaiorN}
		e^{it\Delta}|x|^{-b-s} \sim \frac{1}{t^{\theta}}\left[e^{i\frac{|x|^2}{4t}}\left(\frac{|x|^2}{4t}\right)^{-\beta} + O\left(\left(\frac{|x|^2}{t}\right)^{-\theta}\right) + O\left(\left(\frac{|x|^2}{t}\right)^{-\beta-1}\right)\right].
	\end{equation}
\end{lem}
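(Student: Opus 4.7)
The plan is to derive the formula \eqref{eq:extensaoH} by analytic continuation in the parameter $b+s$, starting from Cazenave's closed-form expression in Lemma \ref{lem:cazenave}, and then to extract \eqref{eq:asymptmaiorN} by substituting the known expansion \eqref{eq:Hasympt} into the right-hand side of \eqref{eq:extensaoH} and tracking an exact cancellation of the leading non-oscillatory terms.

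First I would establish a recurrence for $H$. For $\mathrm{Re}(\theta)>1$ and $\mathrm{Re}(\beta)>0$, writing $(1-r)^{\beta-1}\,dr=-\tfrac{1}{\beta}\,d[(1-r)^{\beta}]$ and integrating by parts in the defining integral of $H(y;\theta,\beta)$, the boundary contributions vanish and one obtains
\begin{equation*}
    H(y;\theta,\beta)=\frac{1}{\beta}\left[(\theta-1)H(y;\theta-1,\beta+1)+iy\,H(y;\theta,\beta+1)\right].
\end{equation*}
Substituting this into Cazenave's formula in the overlap regime $2<b+s<N$ immediately produces \eqref{eq:extensaoH}. The crucial point is that the right-hand side of \eqref{eq:extensaoH} continues to make sense for all $b+s<N+2$, since $\beta+1>0$ is enough to guarantee absolute convergence of both $H$-integrals.

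For the analytic continuation step, viewing $\lambda=b+s$ as a complex parameter, both sides of \eqref{eq:extensaoH} define tempered-distribution-valued holomorphic functions on the strip $\{0<\mathrm{Re}(\lambda)<N+2\}$, away from the exceptional values arising from the standard regularization of the homogeneous distribution $|x|^{-\lambda}$ (see \cite{gelfandshilov}*{Section III.3.2}); these exceptional values on our strip are $\lambda=N$ (which also corresponds to the $1/\beta$ singularity of the right-hand side) and $\lambda=N+1$, the latter being excluded by hypothesis. Since the identity holds on the nonempty open subset where Cazenave's formula applies together with the integration-by-parts conditions (handled, for all $N\geq 2$, by working in a complex strip around $\mathrm{Re}(\lambda)\in(2,N)$), uniqueness of analytic continuation yields \eqref{eq:extensaoH} on the full range $\max\{N,2\}<b+s<N+2$ with $b+s\neq N+1$.

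For the asymptotic, set $y=|x|^2/(4t)$ and apply \eqref{eq:Hasympt} (which is valid since $\beta+1>0$) to each of the two $H$-terms on the right-hand side of \eqref{eq:extensaoH}. The key identities $(\theta-1)\Gamma(\theta-1)=\Gamma(\theta)$, $\Gamma(\beta+1)=\beta\Gamma(\beta)$, and $e^{i\pi(\theta-1)/2}=-i\,e^{i\pi\theta/2}$ show that the two $y^{1-\theta}$ contributions, namely $-i\Gamma(\theta)e^{i\pi\theta/2}y^{1-\theta}$ from $(\theta-1)H(y;\theta-1,\beta+1)$ and $i\Gamma(\theta)e^{i\pi\theta/2}y^{1-\theta}$ from $iy\cdot\Gamma(\theta)e^{i\pi\theta/2}y^{-\theta}$ inside $iy\,H(y;\theta,\beta+1)$, cancel exactly. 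The surviving leading term is the oscillatory contribution $iy\cdot\Gamma(\beta+1)e^{-i\pi(\beta+1)/2}e^{iy}y^{-(\beta+1)}=\beta\Gamma(\beta)e^{-i\pi\beta/2}e^{iy}y^{-\beta}$; dividing by $\beta(4it)^\theta\Gamma(\theta)$ and absorbing unit-modulus constants into the $1/t^\theta$ prefactor (using $\theta+\beta=N/2$) produces the stated leading term $e^{i|x|^2/(4t)}(|x|^2/(4t))^{-\beta}/t^\theta$. The error terms in \eqref{eq:asymptmaiorN} track the remainders from \eqref{eq:Hasympt}: the $O(y^{-\theta-1})$ remainder in $H(y;\theta,\beta+1)$ produces $O(y^{-\theta})$ after multiplication by $iy$, while the subleading oscillatory $e^{iy}y^{-(\beta+1)}$ term from $(\theta-1)H(y;\theta-1,\beta+1)$ gives the $O(y^{-\beta-1})$ error.

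The main obstacle is the rigorous justification of the analytic continuation: one must verify that the Schrödinger evolution of the homogeneous distribution $|x|^{-\lambda}$ depends holomorphically on $\lambda$ throughout the strip $0<\mathrm{Re}(\lambda)<N+2$ with only the expected singularities, which is where the hypothesis $b+s\neq N+1$ enters. The exact cancellation of the $y^{1-\theta}$ terms in the asymptotic is the other delicate point, since otherwise these non-oscillatory contributions would dominate the $e^{iy}y^{-\beta}$ term in the regime considered and spoil \eqref{eq:asymptmaiorN}.
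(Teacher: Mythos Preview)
Your proposal is correct and follows essentially the same route as the paper: the paper derives the identical integration-by-parts recurrence $\beta H(y;\theta,\beta)=(\theta-1)H(y;\theta-1,\beta+1)+iyH(y;\theta,\beta+1)$, substitutes it into Cazenave's formula on the overlap range $2<\lambda<N$, invokes the meromorphic continuation of $\langle e^{it\Delta}|x|^{-\lambda},\varphi\rangle$ from \cite{gelfandshilov} against the analyticity of the right-hand side on $2<\lambda<N+2$, and then obtains \eqref{eq:asymptmaiorN} via the same exact cancellation of the $y^{1-\theta}$ terms that you identify. Your account of the cancellation is in fact more detailed than the paper's, and you correctly use $y=|x|^2/(4t)$ (the argument $|x|^2/t$ appearing in the displayed formula \eqref{eq:extensaoH} is a typo for $|x|^2/(4t)$, as one sees by comparison with Lemma~\ref{lem:cazenave}).
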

\begin{proof}
	Fix a test function $\varphi\in C^\infty_c(\mathbb{R}^N)$. From Lemma \ref{lem:cazenave}, for $0<\lambda<N$,
	\begin{equation}
		\left\langle e^{it\Delta}|x|^{-\lambda}, \varphi\right\rangle =\left\langle \frac{1}{(4it)^{\frac{\lambda}{2}}}\frac{1}{\Gamma(\lambda/2)}H\left(\frac{|x|^2}{4t};\frac{\lambda}{2}, \frac{N-\lambda}{2}\right),\varphi\right\rangle.
	\end{equation}
	For $2<\lambda<N$,
	\begin{align*}
		\frac{N-\lambda}{2} H\left(y;\frac{\lambda}{2},\frac{N-\lambda}{2}\right)&=\frac{N-\lambda}{2}\int_0^1 e^{iyr}r^{(\lambda-2)/2}(1-r)^{(N-\lambda-2)/2}dr\\&=\int_0^1 \partial_r\left(e^{iyr}r^{(\lambda-2)/2}\right)(1-r)^{(N-\lambda)/2}dr \\&=  iyH\left(y;\frac{\lambda}{2},\frac{N+2-\lambda}{2}\right) + \frac{\lambda-2}{2}H\left(y;\frac{\lambda-2}{2},\frac{N+2-\lambda}{2}\right) 
	\end{align*}
	Therefore
	\begin{align}
		\frac{N-\lambda}{2}&\left\langle e^{it\Delta}|x|^{-\lambda}, \varphi\right\rangle \\&=\left\langle \frac{1}{(4it)^{\frac{\lambda}{2}}}\frac{1}{\Gamma(\lambda/2)}\left[\frac{\lambda-2}{2}H\left(\frac{|x|^2}{t};\frac{\lambda-2}{2},\frac{N+2-\lambda}{2}\right)+\frac{i|x|^2}{4t}H\left(\frac{|x|^2}{t},\frac{\lambda}{2},\frac{N+2-\lambda}{2}\right)\right] ,\varphi\right\rangle.
	\end{align}
	
	By Gel\cprime~fand and Shilov \cite{gelfandshilov}*{Section III.3.2}, the l.h.s. is meromorphic in $\lambda$, with poles at the integers greater or equal than $N$. On the other hand, the r.h.s. is analytic for $2<\lambda<N+2$. By unique continuation, \eqref{eq:extensaoH} follows. Using \eqref{eq:Hasympt}, for large $y$,
	\begin{align*}
		(\theta-1)H(y;\theta-1,\beta+1)+iyH(y;\theta,\beta+1)&=(\theta-1)\Gamma(\theta-1)e^{\frac{i\pi(\theta-1)}{2}}y^{-\theta+1} + iy\Gamma(\theta)e^{\frac{i\pi \theta}{2}}y^{-\theta}\\ &\quad + \Gamma(\beta+1)e^{-\frac{i\pi(\beta+1)}{2}}e^{iy}y^{-\beta-1}\left((\theta-1)+iy\right)\\&\quad  + O(y^{-\theta}) + O(y^{-\beta-2}) \\&=i\Gamma(\beta+1)e^{-\frac{i\pi(\beta+1)}{2}}e^{iy}y^{-\beta} + O(y^{-\theta}) + O(y^{-\beta-1}).
	\end{align*}
The precise cancellation of the terms in $y^{-\theta+1}$ gives the claimed asymptotic expansion \eqref{eq:asymptmaiorN}.
\end{proof}

\begin{re}
	As we are considering the case $b+s>\min\{N,N/2+1\}$, we have $\beta<\theta$. In particular, regardless of the relation between $b+s$ and $N$, the asymptotic behavior of $e^{it\Delta}|x|^{-b-s}$ is given by
	\begin{equation}\label{eq:asympt}
		e^{it\Delta}|x|^{-b-s} \sim \frac{1}{t^{\theta}}\left[e^{i\frac{|x|^2}{4t}}\left(\frac{|x|^2}{4t}\right)^{-\beta} + O\left(\left(\frac{|x|^2}{t}\right)^{-\eta}\right) \right],\quad \eta=\min\{\theta,\beta+1\}.
	\end{equation}
\end{re}

\medskip

As the dynamical behavior of $e^{it\Delta}|x|^{-b-s}$ depends on the quocient $|x|^2/t$, we further decompose
\begin{equation}
	\mathbf{III} = \int_0^tf(t-\tau  )\mathbbm{1}_{|x|^2<  \tau}e^{i\tau\Delta}\left(D^s|x|^{-b}\right)d\tau  + \int_0^tf(t-\tau  )\mathbbm{1}_{|x|^2> \tau}e^{i\tau\Delta}\left(D^s|x|^{-b}\right)d\tau  = \mathbf{III}_1 + \mathbf{III}_2.
\end{equation}
\begin{lem}\label{lem:controlo_III1}
Under the assumptions of either Lemma \ref{lem:cazenave} or \ref{lem:extensaoH}, if $b+s<N/2+2$,	
$$\|\mathbf{III}_1\|_{L^\infty_tL^2_x} \lesssim_{|I|} \|u\|_{L^\infty(I;H^{s_0})}^{\alpha+1} $$ 
\end{lem}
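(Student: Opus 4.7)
The plan is to exploit the fact that in the region $|x|^2<\tau$, the argument of the self-similar profile $H$ appearing in the explicit formula for $e^{i\tau\Delta}|x|^{-b-s}$ is bounded, so that the kernel admits a pointwise bound depending only on $\tau$. By Lemma \ref{lem:hormander} we replace $D^s|x|^{-b}$ by $c_{N,b,s}|x|^{-b-s}$, and then distinguish the cases $b+s<N$ and $N<b+s<N/2+2$.

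In the case $b+s<N$, Lemma \ref{lem:cazenave} gives
$$
e^{i\tau\Delta}|x|^{-b-s}=\frac{1}{(4i\tau)^\theta \Gamma(\theta)}H\left(\frac{|x|^2}{4\tau};\theta,\beta\right),
$$
and since $H\in C^\infty$ with $H(y)$ bounded on $[0,1/4]$, we get $|e^{i\tau\Delta}|x|^{-b-s}|\lesssim \tau^{-\theta}$ on $\{|x|^2<\tau\}$. For $N<b+s<N/2+2$ we use instead Lemma \ref{lem:extensaoH}: the factor $\frac{|x|^2}{t}H(|x|^2/t;\theta,\beta+1)$ is again bounded by a constant on $\{|x|^2<\tau\}$ (both $H$ and its argument being controlled), so the same pointwise bound $|e^{i\tau\Delta}|x|^{-b-s}|\lesssim\tau^{-\theta}$ persists.

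With this kernel estimate in hand, I would compute
$$
\|\mathbbm{1}_{|x|^2<\tau}e^{i\tau\Delta}|x|^{-b-s}\|_{L^2_x}\lesssim \tau^{-\theta}\,|B_{\sqrt\tau}|^{1/2}\lesssim \tau^{\frac{N-2b-2s}{4}}.
$$
Since $f(\tau) := |u|^\alpha u|_{x=0}$ is pointwise controlled by $\|u(\tau)\|_{L^\infty_x}^{\alpha+1}$, and the Sobolev embedding $H^{s_0}\hookrightarrow L^\infty$ (valid because $s_0>N/2$) yields $|f(t-\tau)|\lesssim \|u\|_{L^\infty_t H^{s_0}_x}^{\alpha+1}$, Minkowski's inequality gives
$$
\|\mathbf{III}_1(t)\|_{L^2_x}\lesssim \|u\|_{L^\infty(I;H^{s_0})}^{\alpha+1}\int_0^t \tau^{\frac{N-2b-2s}{4}}\,d\tau.
$$
The time integral converges at $\tau=0$ precisely when $(N-2b-2s)/4>-1$, i.e.\ $b+s<N/2+2$, which is our hypothesis; integrating then produces a factor $\lesssim_{|I|} 1$ and concludes the estimate.

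The argument is essentially a direct bookkeeping calculation once the self-similar formulas are in place; the only mild subtlety is to verify, in the range $b+s>N$, that both summands in \eqref{eq:extensaoH} stay bounded in the inner region despite the formula containing the extra factor $|x|^2/\tau$ — but this factor is tamed by the constraint $|x|^2<\tau$. The threshold $b+s<N/2+2$ then appears as the exact integrability condition in the time variable.
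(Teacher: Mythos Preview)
Your proof is correct and follows essentially the same approach as the paper's: bound the self-similar profile in the inner region $\{|x|^2<\tau\}$ to obtain the pointwise kernel bound $\lesssim\tau^{-\theta}$, multiply by the volume factor $\tau^{N/4}$, and integrate in $\tau$ under the hypothesis $b+s<N/2+2$. The only cosmetic difference is that the paper writes the $L^\infty_tL^2_x$ bound via duality against $\phi\in L^1_tL^2_x$, whereas you apply Minkowski's inequality directly; these are equivalent, and your treatment of the case $b+s>N$ via \eqref{eq:extensaoH} is in fact more explicit than the paper's.
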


\begin{proof}
	By duality,
	\begin{align}\label{eq:estIII1}
		&\left|\int_0^T\int \int _0^t f(t-\tau  )\frac{1}{\tau^\theta}H\left(\frac{|x|^2}{\tau},\theta, \beta\right)\mathbbm{1}_{|x|^2<\tau}\phi(x,t)d\tau dxdt\right|\nonumber\\\lesssim& \int_0^T\int_0^t|f(t-\tau  )|\frac{1}{\tau^\theta}\left(\int_{|x|^2<\tau}\phi(x,t)dx\right)d\tau dt\nonumber \\\lesssim& \int_0^T\int_0^t \tau^{\frac{N}{4}-\theta}\|f\|_{L^\infty_t}\|\phi(t)\|_{L^{2}_x}d\tau dt\\\lesssim& \int_0^T\int_0^t \tau^{\frac{N}{4}-\theta}\|u\|_{L^\infty_tH^{s_0}_x}^{\alpha+1}\|\phi(t)\|_{L^{2}_x}d\tau dt \lesssim \|u\|_{L^\infty_tH^{s_0}_x}^{\alpha+1}\|\phi\|_{L^1_tL^{2}_x}
	\end{align}
since $N/4-\theta>-1$.
%Fix 
%$$
%q=\frac{4^-}{(N-2s)(\alpha+1)}>1,
%$$which satisfies \eqref{eq:cond_q}. From Hölder and Young's inequality, 
%\begin{align}
%		 \int_0^T\int_0^T |f(t-\tau  ,0)|\tau^{\frac{N}{4}-\theta}\|\phi(t)\|_{L^{2}_x}d\tau dt\lesssim& \|f(\cdot,0)\ast (\tau^{\frac{N}{4}-\theta}\mathbbm{1}_{[0,T]})\|_{L^\infty_t}\|\phi\|_{L^{1}_tL^{2}_x} \label{eq:estIII1_2}\\\lesssim& \|f(\cdot,0)\|_{L^q_t}\|\tau^{\frac{N}{4}-\theta}\mathbbm{1}_{[0,T]}\|_{L_w^{q'}}\|\phi\|_{L^{1}_tL^{2}_x}
%	\end{align}
%	The boundedness of this last term is equivalent to 
%	$$
%	\frac{N}{4}-\theta\ge -\frac{1}{q'},\mbox{ that is, }\alpha<\frac{4-2b}{N-2s}.
%	$$
\end{proof}
 Using the asymptotic expansion for $H$, we write
\begin{align*}
	\mathbf{III}_2=\int_0^tf(t-\tau  )\mathbbm{1}_{|x|^2> \tau}e^{i\tau\Delta}\left(D^s|x|^{-b}\right)d\tau  &= c\int_0^t f(t-\tau  )\frac{1}{\tau^\theta}\left(\frac{|x|^2}{\tau}\right)^{-\beta}e^{i|x|^2/\tau} \mathbbm{1}_{|x|^2> \tau} d\tau  \\&\quad + \int_0^t f(t-\tau  )\frac{1}{\tau^\theta}O\left(\left(\frac{|x|^2}{\tau}\right)^{-\eta}\right) \mathbbm{1}_{|x|^2> \tau} d\tau \\& =: \mathbf{III}_{21} + \mathbf{III}_{22}.
\end{align*}

\begin{lem}\label{lem:controlo_III22}
Under the assumptions of either Lemma \ref{lem:cazenave} or \ref{lem:extensaoH}, if $b+s<N/2+2$,
$$\|\mathbf{III}_{22}\|_{L^\infty_tL^2_x}\lesssim_{|I|} \|u\|_{L^\infty(I;H^{s_0})}^{\alpha+1}. $$ 

\end{lem}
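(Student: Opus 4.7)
The plan is to reduce the lemma to an explicit pointwise/integral computation, exploiting the fact that in the definition of $\mathbf{III}$ the function $f$ depends only on time. Since $f(\tau)=|u(\tau,0)|^\alpha u(\tau,0)$ and $s_0>N/2$, Sobolev embedding yields
$$|f(t-\tau)|\lesssim \|u\|_{L^\infty_t H^{s_0}_x}^{\alpha+1}=:M.$$
Plugging this into $\mathbf{III}_{22}$ and absorbing the indicator $\mathbbm{1}_{|x|^2>\tau}$ into the upper limit gives the pointwise bound
$$|\mathbf{III}_{22}(t,x)|\lesssim \frac{M}{|x|^{2\eta}}\int_0^{\min(t,|x|^2)}\tau^{\eta-\theta}\,d\tau.$$
Crucially, $\eta-\theta+1>0$: this is immediate when $\eta=\theta$, and equivalent to $b+s<N/2+2$ when $\eta=\beta+1$, so the $\tau$-integral is finite.

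Carrying out the $\tau$-integration produces
$$|\mathbf{III}_{22}(t,x)|\lesssim M\,\frac{(\min(t,|x|^2))^{\eta-\theta+1}}{|x|^{2\eta}},$$
which I would then integrate in $x$ after splitting into the inner region $\{|x|^2\le t\}$ and the outer region $\{|x|^2>t\}$. On the inner region the right-hand side becomes $\lesssim M|x|^{2-(b+s)}$, whose $L^2_x$-integrability at the origin requires $N+4-2(b+s)>0$, matching the standing hypothesis $b+s<N/2+2$. On the outer region the bound reads $\lesssim M\,t^{\eta-\theta+1}|x|^{-2\eta}$, and $L^2_x$-integrability at infinity needs $4\eta>N$; this follows from $4\theta=2(b+s)>N$ (guaranteed by $b+s\ge\min\{N,N/2+1\}>N/2$) when $\eta=\theta$, and from $b+s<N/2+2$ when $\eta=\beta+1$.

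Both spatial integrals evaluate to positive powers of $t$, so for $t$ in the bounded interval $I$ one obtains $\|\mathbf{III}_{22}(t,\cdot)\|_{L^2_x}\lesssim_{|I|} M$ uniformly in $t$, which is the desired $L^\infty_t L^2_x$ bound. The main technical point is the careful bookkeeping of exponents to verify that all convergence requirements — the $\tau$-integral at $0$, the $x$-integral at the origin in the inner region, and the $x$-integral at infinity in the outer region — collapse to the single borderline condition $b+s<N/2+2$; the actual computations are elementary.
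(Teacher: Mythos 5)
Your proof is correct and follows essentially the same route as the paper: both arguments bound $|f|\lesssim \|u\|_{L^\infty_t H^{s_0}}^{\alpha+1}$ by Sobolev embedding and then do elementary power counting on the kernel $\tau^{-\theta}\bigl(|x|^2/\tau\bigr)^{-\eta}\mathbbm{1}_{|x|^2>\tau}$, using exactly the same two conditions ($4\eta>N$ for spatial integrability and $b+s<N/2+2$ for the time integral). The only difference is the order of integration — the paper takes the $L^2_x$ norm inside the $\tau$-integral (via duality against $L^1_tL^2_x$), obtaining $\int_0^T\tau^{N/4-\theta}\,d\tau<\infty$, whereas you integrate in $\tau$ first and then split $L^2_x$ into $\{|x|^2\le t\}$ and $\{|x|^2>t\}$ — a cosmetic variation yielding the same exponents.
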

\begin{proof}
	The proof follows from similar arguments to those in the previous lemma. Taking $\phi\in L^1_tL^2_x$,
\begin{align*}
	&\int_0^T\int \int_0^t \frac{|f(t-\tau  )|}{\tau^{\theta} }\left(\frac{|x|^2}{\tau}\right)^{-\eta}\mathbbm{1}_{|x|^2> \tau }\phi(x,t)d\tau dxdt\\ \lesssim &\left( \int_0^T \frac{1}{\tau^{\theta} }\left\|\left(\frac{|x|^2}{\tau}\right)^{-\eta}\mathbbm{1}_{|x|^2> \tau } \right\|_{L^2_x}d\tau\right) \|\phi\|_{L^1_tL^2_x}\|u\|_{L^\infty(I;H^{s_0})}^{\alpha+1}.
\end{align*}
	Since $4\eta>N$,
	$$
	\int_{\tau\le|x|^2} \frac{\tau^{2\eta}}{|x|^{4\eta}}dx\lesssim \tau^{N/2}
	$$
and, since $N/4-\theta>-1$, the integral in $\tau$ is convergent.
\end{proof}

\begin{lem}\label{lem:explode1}
	Suppose that
	$$
	1+\frac{N}{2}\le b+s<2+\frac{N}{2},\quad b+s\neq N, N+1.
	$$
	Let $u_0(x)=e^{-\pi|x|^2}$ and $u=e^{it\Delta}u_0$. Then, for any $t>0$ fixed,
	$$
\left\|\int_0^tD^se^{i(t-\tau)\Delta}|x|^{-b}|u|^\alpha(\tau) u(\tau)d\tau\right\|_{L^2(\mathbb{R}^N)}=+\infty.
	$$
\end{lem}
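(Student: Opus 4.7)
The plan is to exploit the decomposition $D^s u_{\mathrm{Duh}} = \mathbf{I} + \mathbf{II} + \mathbf{III}_1 + \mathbf{III}_{21} + \mathbf{III}_{22}$ systematically built in the preceding lemmas. By Lemmas \ref{lem:controlo_I}, \ref{lem:controlo_II}, \ref{lem:controlo_III1} and \ref{lem:controlo_III22}, applied to the Schwartz datum $u_0(x) = e^{-\pi|x|^2}$ (which lies in $H^{s_0}$ for every $s_0 > 0$, so $\|u\|_{L^\infty(I;H^{s_0})}<\infty$ on any bounded interval), the terms $\mathbf{I}$, $\mathbf{II}$, $\mathbf{III}_1$ and $\mathbf{III}_{22}$ all belong to $L^\infty_t L^2_x$. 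Consequently, the lemma reduces to showing that $\|\mathbf{III}_{21}(t, \cdot)\|_{L^2(\mathbb{R}^N)} = +\infty$ for every fixed $t>0$.

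To analyze $\mathbf{III}_{21}$, I would perform the change of variable $v = |x|^2/\tau$ in its defining integral, obtaining
\[
\mathbf{III}_{21}(t, x) = c\, |x|^{2-(b+s)} \int_{\max(|x|^2/t,\,1)}^{\infty} f\!\left(t - \tfrac{|x|^2}{v}\right) v^{a}\, e^{iv}\, dv,
\]
where $a := (b+s) - N/2 - 2 \in [-1,0)$ and $f(\sigma) = (1+4\pi i\sigma)^{-N/2}(1+16\pi^2\sigma^2)^{-N\alpha/4}$ is smooth with $f(0)=1$. The hypothesis $b+s \in [N/2+1, N/2+2)$ places the spatial profile $|x|^{2-(b+s)}$ at the $L^2_{\mathrm{loc}}$ threshold while keeping the oscillatory integral conditionally convergent. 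I would then extract the precise leading behaviour in two regimes. In the near-origin regime $|x|^2 \ll t$, the lower limit equals $1$ and the integral converges to $f(t)\, J_0$ with $J_0 := \int_1^\infty v^a e^{iv}\, dv$; the constant $J_0$ is non-zero, which can be checked using the Gelfand--Shilov evaluation $\int_0^\infty v^a e^{iv}\, dv = \Gamma(a+1)\, e^{i\pi(a+1)/2}$. In the far-field regime $|x|^2 \gg t$, integration by parts at the endpoint $v=|x|^2/t$, together with $f(0)=1$, extracts the leading oscillatory term $\sim i c\, t^{-a}\, |x|^{(b+s)-N-2}\, e^{i|x|^2/t}$, with explicit control of the remainder of size $O(|x|^{(b+s)-N-4})$.

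The main obstacle -- and technical heart of the argument -- is to combine these two asymptotic profiles, together with the quantitative control on the remainders, so as to certify that the $L^2$-integral over $\mathbb{R}^N$ of the resulting expression is genuinely infinite. This step requires verifying that the non-zero complex prefactors appearing in both the near-origin and far-field asymptotics preclude any cancellation at the $L^2$ threshold, and that the sub-leading corrections do not alter the divergent contribution. Once this quantitative divergence is in place, the $L^2$-non-integrability of $\mathbf{III}_{21}(t,\cdot)$ propagates through the decomposition to the whole Duhamel expression $D^s u_{\mathrm{Duh}}(t,\cdot)$, completing the proof.
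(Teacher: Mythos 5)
Your reduction to $\mathbf{III}_{21}$ and the change of variables $v=|x|^2/\tau$ coincide with the paper's proof, and the two asymptotic profiles you extract are essentially correct. The problem is the step you postpone as the ``technical heart'': it is not a technicality, it is the whole proof, and as framed it cannot be carried out. Under the standing hypothesis $b+s<N/2+2$ both of your leading profiles are square integrable: near the origin $\int_{|x|\le 1}|x|^{2(2-(b+s))}\,dx<\infty$ exactly when $2(b+s)<N+4$, and at infinity $\int_{|x|\ge 1}|x|^{2((b+s)-N-2)}\,dx<\infty$ under the same inequality, while your correction terms $O(|x|^{4-(b+s)})$ and $O(|x|^{(b+s)-N-4})$ are better still. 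Hence no discussion of ``non-cancellation of the complex prefactors'' can turn these expansions into $\|\mathbf{III}_{21}(t,\cdot)\|_{L^2}=+\infty$; if completed along your lines, the argument would produce an $L^2$ bound for $\mathbf{III}_{21}(t,\cdot)$ rather than show it is infinite. (Relatedly, $|x|^{2-(b+s)}$ is not ``at the $L^2_{\mathrm{loc}}$ threshold'' here: that threshold is $b+s=N/2+2$, which the hypothesis excludes.) A telling symptom is that the decisive assumption of the lemma, $b+s\ge N/2+1$, is never used in your sketch.

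The paper's divergence mechanism is different and does not rest on spatial tails at all. It pairs $\mathbf{III}_{21}$ against a fixed $L^2$ test function supported where $|x|^2\sim 4t$, performs the same change of variables, splits $f(t-|x|^2/s)=\bigl[f(t-|x|^2/s)-f(t)\bigr]+f(t)$, controls the bracketed piece by the mean value theorem as in \eqref{eq:meanvalue}, and attributes the blow-up to the remaining term in \eqref{eq:diverge}, namely $f(t)\int |x|^{2-2\theta}\phi(x)\bigl(\int_{|x|^2/t}^{\infty}s^{\theta-\beta-2}e^{is}\,ds\bigr)dx$, which is asserted to diverge precisely because $\theta-\beta-2\ge -1$, i.e.\ $b+s\ge N/2+1$, together with $f(t)\neq 0$. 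In other words, the singularity is located in the borderline exponent of the inner $s$-integral (equivalently, in the failure of absolute convergence of the $\tau$-integral at $\tau=0$ once $b+s\ge N/2+1$), at fixed $x$ on the support of $\phi$ --- not in a failure of square-integrability of the spatial profiles. Your treatment of that very integral as a convergent oscillatory integral, assigning it the finite value $J_0$ via $\Gamma(a+1)e^{i\pi(a+1)/2}$ in the near-origin regime and extracting an endpoint boundary term by integration by parts in the far field, presupposes exactly what the paper's argument denies, and this tension is the real crux: either you justify divergence of the $f(t)$-term in the sense used in \eqref{eq:diverge} (this is where $b+s\ge N/2+1$ must enter), or you must identify a different source of unboundedness, because --- as your own asymptotics show --- the size of the spatial profiles alone does not give it.
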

\begin{proof}
 We write
	$$
	\int_0^tD^se^{i(t-\tau)\Delta}|x|^{-b}|u|^\alpha(\tau) u(\tau)d\tau = \mathbf{I} + \mathbf{II} + \mathbf{III}_{1} + \mathbf{III}_{22} + \mathbf{III}_{21}
	$$
	By Lemmas \ref{lem:controlo_II}, \ref{lem:controlo_I}, \ref{lem:controlo_III1} and \ref{lem:controlo_III22}, the first four terms have bounded $L^2_x$ norm. Let us show that the last term has infinite $L^2_x$ norm. Take a test function $\phi\in L^2(\mathbb{R}^N)$ supported near $|x|^2\sim 4t$.
	\begin{align}
		\int\mathbf{III}_{21}\phi dx&= \int \int_0^t \frac{f(t-\tau)}{\tau^{\theta-\beta}}|x|^{-2\beta}e^{i|x|^2/\tau}d\tau \phi(x)dx\\&= \int |x|^{-2\beta}\phi(x)\left(\int_0^t \frac{f(t-\tau)}{\tau^{\theta-\beta}}e^{i|x|^2/\tau}d\tau\right)dx \\&= \int |x|^{-2\theta+2}\phi(x)\int_{|x|^2/t}^\infty f\left(t-\frac{|x|^2}{s}\right)s^{\theta-\beta-2}e^{is}ds\\&=\int |x|^{-2\theta+2}\phi(x)\left(\int_{|x|^2/t}^\infty \left[f\left(t-\frac{|x|^2}{s}\right)-f(t)\right]s^{\theta-\beta-2}e^{is}ds\right)dx \\&\qquad + f(t)\int |x|^{-2\theta+2}\phi(x)\left(\int_{|x|^2/t}^\infty s^{\theta-\beta-2}e^{is}ds\right)dx.\label{eq:diverge}
	\end{align}
Since 
$$\theta-\beta-2\geq -1\Leftrightarrow b+s\geq \frac{N}{2}+1$$
and 
$$f(t)=\left(|e^{it\Delta}u_0|^\alpha e^{it\Delta}u_0\right)\Big|_{x=0}=\frac{1}{|1+4i\pi t|^{N\alpha/2}(1+4i\pi t)^{N/2}}\neq 0,
$$
 the second integral diverges. For the first integral, we use the mean-value theorem:
\begin{align}
	&\ \left|\int |x|^{-2\theta+2}\phi(x)\int_{|x|^2/t}^\infty \left[f\left(t-\frac{|x|^2}{s}\right)-f(t)\right]s^{\theta-\beta-2}e^{is}ds\right|\\\lesssim & \ \|f'\|_{L^\infty(0,t)}\int |x|^{-2\theta}\phi(x)\int_{|x|^2/t}^\infty s^{\theta-\beta-3} ds <\infty.\label{eq:meanvalue}
\end{align}
\end{proof}
If $N=1$, $N<\frac{N}{2}+1$ and the previous result does not cover the range
$$
1=N<b+s<\frac{N}{2}+1=\frac{3}{2}.
$$
In this case, we are still able to show 
\begin{lem}\label{lem:explode2}
	For $N=1$, suppose that
	$$
	1< b+s<\frac{3}{2}.
	$$
	Let $u_0(x)=e^{-\pi|x|^2}$ and $u=e^{it\Delta}u_0$. Then, for any $t>0$ fixed,
	$$
	\left\|\int_0^tD^se^{i(t-\tau)\Delta}|x|^{-b}|u|^\alpha(\tau) u(\tau)d\tau\right\|_{L^2(\mathbb{R}^N)}=+\infty.
	$$
\end{lem}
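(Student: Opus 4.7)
The plan is to follow the blueprint of Lemma \ref{lem:explode1} as closely as possible, adapting only two points: the source of the explicit formula for $e^{i\tau\Delta}|x|^{-b-s}$, and the argument that isolates the divergence. As in that proof, I start with the decomposition
$$D^s\int_0^t e^{i(t-\tau)\Delta}|x|^{-b}|u|^\alpha u\,d\tau=\mathbf{I}+\mathbf{II}+\mathbf{III}$$
introduced at the beginning of the section. The hypotheses of Lemmas \ref{lem:controlo_I} and \ref{lem:controlo_II} (namely $b<N$ and $b+s<N/2+2$) translate in our setting to $b<1$ and $b+s<5/2$, both of which hold; therefore $\mathbf{I}$ and $\mathbf{II}$ have bounded $L^2_x$ norm. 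Since $b+s-N=b+s-1\in(0,1/2)$ is not a non-negative integer, Lemma \ref{lem:hormander} applies and gives $\mathbf{III}=c\int_0^t f(\tau,0)\,e^{i(t-\tau)\Delta}|x|^{-b-s}\,d\tau$, so the task reduces to showing $\|\mathbf{III}\|_{L^2}=+\infty$.

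The preliminary step for the analysis of $\mathbf{III}$ is to justify that Cazenave's formula of Lemma \ref{lem:cazenave} remains valid in our range, which lies outside its stated hypothesis $0<b+s<N$. The right-hand side $\frac{1}{(4it)^\theta\Gamma(\theta)}H(|x|^2/(4t);\theta,\beta)$ is jointly analytic in $\theta=(b+s)/2\in(1/2,3/4)$ and $\beta=(1-b-s)/2\in(-1/4,0)$: neither $\Gamma(\theta)$ nor $\Gamma(\beta)$ has a pole in this range, and $H$ can be expressed as $B(\theta,\beta)M(\theta,1/2,iy)$ with $M$ the entire Kummer confluent hypergeometric function; the left-hand side is analytic in $b+s$ as a tempered-distribution-valued function by the Gelfand-Shilov theory of $|x|^{-\lambda}$. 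Since they agree on the open set $0<b+s<N$, they agree throughout. The asymptotic expansion \eqref{eq:Hasympt} of $H$ at infinity extends for the same reason. With this in hand, I split $\mathbf{III}=\mathbf{III}_1+\mathbf{III}_{21}+\mathbf{III}_{22}$ exactly as in the proof of Lemma \ref{lem:explode1}, and Lemmas \ref{lem:controlo_III1} and \ref{lem:controlo_III22} control $\mathbf{III}_1$ and $\mathbf{III}_{22}$ in $L^2_x$ (their only assumption $b+s<N/2+2$ still holds).

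The heart of the proof, and the main obstacle, is to establish $\|\mathbf{III}_{21}\|_{L^2_x}=+\infty$. The difficulty compared with Lemma \ref{lem:explode1} is that the critical exponent $a=\theta-\beta-2=b+s-5/2$ now lies strictly below $-1$, so the integral $\int_c^\infty s^a e^{is}\,ds$ is absolutely convergent and the simple divergence argument used there no longer locates the singular contribution. My plan is to pair $\mathbf{III}_{21}$ with a family of $L^2$ test functions $\phi_n$ concentrated on the resonance curve $|x|^2\sim 4t$ and carrying the conjugate phase $e^{-i|x|^2/(4t)}$, so that the oscillations cancel in $\langle \mathbf{III}_{21},\phi_n\rangle$. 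After the substitution $s=|x|^2/(4\tau)$ the pairing becomes a real integral whose dominant contribution comes from the lower endpoint $s=|x|^2/(4t)$, where the prefactor $|x|^{2(1-\theta)}$ combined with the non-oscillatory $|x|^{-b-s}$ piece of the expansion produces a slowly decaying contribution that is not controlled by $L^2$ duality as $n\to\infty$. The most delicate step will be to show that this singular contribution is not cancelled by the subleading corrections already absorbed into $\mathbf{III}_1$ and $\mathbf{III}_{22}$, which amounts to a careful bookkeeping of the constants appearing in the extended asymptotic expansion of $H(y;\theta,\beta)$.
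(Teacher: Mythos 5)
Your reduction to showing $\|\mathbf{III}_{21}\|_{L^2}=+\infty$ is sound, and your preliminary observation is a genuinely good one: for $N=1$ and $1<b+s<3/2$ neither Lemma \ref{lem:cazenave} nor Lemma \ref{lem:extensaoH} literally applies, and continuing Cazenave's formula analytically (e.g. via $H(y;\theta,\beta)=\tfrac{\Gamma(\theta)\Gamma(\beta)}{\Gamma(N/2)}M(\theta,N/2;iy)$, noting $\theta+\beta=N/2$) is exactly the right way to justify the decomposition in this range — a point the paper leaves implicit. The genuine gap is at the heart of the lemma: you never prove that $\|\mathbf{III}_{21}\|_{L^2}=+\infty$. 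The decisive estimate is only announced ("the most delicate step will be to show\dots") and deferred to unspecified bookkeeping of constants, so the statement is not established.

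Moreover, the mechanism you sketch would not work as described. Since $\beta-\theta=\tfrac{N}{2}-(b+s)\in(-1,-\tfrac12)$ here, the $\tau$-integral defining $\mathbf{III}_{21}$ is absolutely convergent and one has the pointwise bound $|\mathbf{III}_{21}(x)|\lesssim |x|^{-2\beta}\min(t,|x|^2)^{1+\beta-\theta}$, so $\mathbf{III}_{21}$ is a locally bounded function; in particular it is perfectly finite on any fixed annulus $|x|^2\sim 4t$. Consequently, no family $\phi_n$ of uniformly $L^2$-bounded test functions supported near $|x|^2\sim 4t$ — with or without the conjugate phase $e^{-i|x|^2/(4t)}$ — can make $\langle\mathbf{III}_{21},\phi_n\rangle$ blow up; the failure of square integrability must come from spatial infinity, where the amplitude $|x|^{-2\beta}$ grows precisely because $\beta<0$ (i.e. $b+s>N$). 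That is what the paper exploits: it pairs with translated bumps $\phi_k(x)=\phi(x-\sqrt{k})$ of fixed $L^2$ norm, splits the pairing as in \eqref{eq:diverge}, shows the $\bigl[f\bigl(t-\tfrac{|x|^2}{s}\bigr)-f(t)\bigr]$ contribution stays under control as $k\to\infty$ via the mean value theorem, and extracts from the $f(t)$ term a contribution of size $k^{-\beta}\to\infty$. Finally, attributing the divergence to "the non-oscillatory $|x|^{-b-s}$ piece" inside $\mathbf{III}_{21}$ is inconsistent with your own decomposition: the non-oscillatory branch $y^{-\theta}$ of the expansion lives in $\mathbf{III}_{22}$ (and $\mathbf{III}_1$), which you have already shown to be bounded, while $\mathbf{III}_{21}$ is exactly the oscillatory $e^{iy}y^{-\beta}$ branch. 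So both the location of the singular behavior and the choice of test functions in your plan are misdirected, on top of the key estimate being left unproven.
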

\begin{proof}
	As in the previous proof, it suffices to prove that $\|\mathbf{III}_{21}\|_{L^2(\mathbb{R}^N)}=+\infty$. Fix $\phi\in C^\infty_c(-1,1)$ and set $\phi_k(x)=\phi(x-\sqrt{k})$, $k$ large. Recall that, by \eqref{eq:diverge},
	 	\begin{align*}
	 	\int\mathbf{III}_{21}\phi_k dx&=\int |x|^{-2\theta+2}\phi_k(x)\left(\int_{|x|^2/t}^\infty \left[f\left(t-\frac{|x|^2}{s}\right)-f(t)\right]s^{\theta-\beta-2}e^{is}ds\right)dx \\&\qquad + f(t)\int |x|^{-2\theta+2}\phi_k(x)\left(\int_{|x|^2/t}^\infty s^{\theta-\beta-2}e^{is}ds\right)dx.
	 \end{align*}
 As in \eqref{eq:meanvalue}, the first integral is uniformly bounded in $k$. For the second,
 $$
 \int |x|^{-2\theta+2}\phi_k(x)\left(\int_{|x|^2/t}^\infty s^{\theta-\beta-2}e^{is}ds\right)dx \sim  \int |x|^{-2\beta}\phi_k(x)dx \sim k^{-\beta} \to \infty \mbox{ as }k\to \infty
 $$
 since $\beta<0$.
\end{proof}

%
%\begin{proof}[Proof of Theorem \ref{thm:illposed}]
%	For part (1), it suffices to take any $\varphi\in C^\infty_c(\mathbb{R}^N)$ with $\varphi(0)=1$ and define
%	$$
%	\varphi_k(t,x)=\frac{1}{k}\varphi(x).
%	$$
%	The result then follows from Lemmas \ref{lem:controlo_II}, \ref{lem:controlo_I}\footnote{Note that for $N\geq 3$, $b<2$ and $\alpha<(4-2b)/N$ it is always possible to find $\tilde{s}>0$ small enough satisfying the assumptions of Lemma \ref{lem:controlo_I}} and \ref{lem:exemplof1}.
%	For part (2), we consider instead
%	$$
%	\varphi_k(t,x)=\frac{e^{-ikt}}{k^{0^+}}\varphi(x)
%	$$
%	and the result is a consequence of Lemmas \ref{lem:controlo_II}, \ref{lem:controlo_I}, \ref{lem:controlo_III1}, \ref{lem:controlo_III22} and \ref{lem:exemplofoscila}.
%\end{proof}
%\begin{proof}[Proof of Theorem \ref{thm:illposed2}]
%	Take $\epsilon>0$ as in Lemma \ref{lem:exemplomaiorN}. Fix $\varphi\in C^\infty_c(\mathbb{R}^N)$ with $\varphi(0)=1$ and define
%	$$
%	\varphi_k(t,x)=\frac{1}{k}\varphi(x)\mathbbm{1}_{[0,\epsilon]}.
%	$$
%	The application of Lemmas \ref{lem:controlo_II}, \ref{lem:controlo_I} and \ref{lem:exemplomaiorN} yields the result.
%\end{proof}

\begin{proof}[Proof of Theorem \ref{thm:illposed}]
We follow a classical argument, see Tzvetkov \cite{tzvetkov} and Bejenaru and Tao \cite{taobejenaru}. Suppose that there exists a $C^{\alpha+1}$-flow defined in $H^s(\mathbb{R}^N)$. Given $\delta>0$ and $T>0$, let $u(\delta)\in C([0,T], H^s(\mathbb{R}^N))$ be the solution with initial data $\delta u_0$, for $u_0= e^{-\pi|x|^2}$. Then $u(0)=0$,
$$
\frac{\partial u(\delta)}{\partial \delta}\Big|_{\delta=0}=e^{it\Delta}u_0,\qquad \frac{\partial^{(j)} u(\delta)}{\partial \delta^{(j)}}\Big|_{\delta=0}=0,\quad j=2,\dots,\alpha
$$
and
$$
\frac{\partial^{(\alpha+1)} u(\delta)}{\partial \delta^{(\alpha+1)}}\Big|_{\delta=0}=i\mu\int_0^te^{i(t-\tau)\Delta}|x|^{-b}|e^{i\tau\Delta}u_0|^{\alpha}e^{i\tau\Delta}u_0 d\tau.
$$
By Lemmas \ref{lem:explode1} and \ref{lem:explode2}, the last term is not in $H^s(\mathbb{R}^N)$, for any $t>0$, contradicting the smoothness of the flow.
\end{proof}

%
%
%\begin{proof}[Proof of Theorem \ref{thm:illposed}]
%	The proof is a direct application of Lemmas \ref{lem:controlo_II}, \ref{lem:controlo_I} and \ref{lem:exemplof1} (for part (1)) and of Lemmas \ref{lem:controlo_II}, \ref{lem:controlo_I}, \ref{lem:controlo_III1}, \ref{lem:controlo_III22} and \ref{lem:exemplofoscila} (for part (2)).
%\end{proof}
%\begin{proof}[Proof of Theorem \ref{thm:illposed2}]
%	The result is an immediate application of Lemmas \ref{lem:controlo_II}, \ref{lem:controlo_I} and \ref{lem:exemplomaiorN}.
%\end{proof}
%
%\section{Conflict of interest}
%
%\noindent There is no conflict of interest associated to this manuscript.

%\bibliography{biblio_final.bib}

% \bib, bibdiv, biblist are defined by the amsrefs package.
\begin{bibdiv}
\begin{biblist}

\bib{AT21}{article}{
      author={Aloui, Lassaad},
      author={Tayachi, Slim},
       title={Local well-posedness for the inhomogeneous nonlinear
  {S}chr\"{o}dinger equation},
        date={2021},
        ISSN={1078-0947},
     journal={Discrete Contin. Dyn. Syst.},
      volume={41},
      number={11},
       pages={5409\ndash 5437},
         url={https://doi.org/10.3934/dcds.2021082},
      review={\MR{4305589}},
}

\bib{AK21}{article}{
      author={An, JinMyong},
      author={Kim, JinMyong},
       title={Local well-posedness for the inhomogeneous nonlinear
  {S}chr\"{o}dinger equation in {$H^s(\mathbb R^n)$}},
        date={2021},
        ISSN={1468-1218},
     journal={Nonlinear Anal. Real World Appl.},
      volume={59},
       pages={Paper No. 103268, 21},
         url={https://doi.org/10.1016/j.nonrwa.2020.103268},
      review={\MR{4183915}},
}

\bib{AK21-2}{article}{
      author={An, JinMyong},
      author={Kim, JinMyong},
       title={A note on the $H^s$-critical inhomogeneous nonlinear
  {S}chr\"{o}dinger equation},
        date={2021},
     journal={arXiv preprint arXiv:2112.11690},
}

\bib{taobejenaru}{article}{
      author={Bejenaru, Ioan},
      author={Tao, Terence},
       title={Sharp well-posedness and ill-posedness results for a quadratic
  non-linear {S}chr\"{o}dinger equation},
        date={2006},
        ISSN={0022-1236,1096-0783},
     journal={J. Funct. Anal.},
      volume={233},
      number={1},
       pages={228\ndash 259},
         url={https://doi.org/10.1016/j.jfa.2005.08.004},
      review={\MR{2204680}},
}

\bib{belmonte2007lie}{article}{
      author={Belmonte-Beitia, Juan},
      author={P{\'e}rez-Garc{\'\i}a, V{\'\i}ctor~M.},
      author={Vekslerchik, Vadym},
      author={Torres, Pedro~J.},
       title={Lie symmetries and solitons in nonlinear systems with spatially
  inhomogeneous nonlinearities},
        date={2007},
     journal={Physical review letters},
      volume={98},
      number={6},
       pages={064102},
}
\bib{Campos_New_2019}{article}{
      author={Campos, Luccas},
       title={Scattering of radial solutions to the inhomogeneous nonlinear {S}chr\"{o}dinger equation},
        date={2021},
        ISSN={0362-546X},
     journal={Nonlinear Anal.},
      volume={202},
       pages={1\ndash 17},
}
\bib{MLG21}{article}{
      author={Cardoso, Mykael},
      author={Farah, Luiz~Gustavo},
       title={Blow-up of radial solutions for the intercritical inhomogeneous
  {NLS} equation},
        date={2021},
        ISSN={0022-1236},
     journal={J. Funct. Anal.},
      volume={281},
      number={8},
       pages={Paper No. 109134, 38},
         url={https://doi.org/10.1016/j.jfa.2021.109134},
      review={\MR{4271787}},
}

\bib{cazenave}{book}{
      author={Cazenave, Thierry},
       title={Semilinear {S}chr\"{o}dinger equations},
      series={Courant Lecture Notes in Mathematics},
   publisher={New York University, Courant Institute of Mathematical Sciences,
  New York; American Mathematical Society, Providence, RI},
        date={2003},
      volume={10},
        ISBN={0-8218-3399-5},
      review={\MR{2002047}},
}

\bib{CFH11}{article}{
      author={Cazenave, Thierry},
      author={Fang, Daoyuan},
      author={Han, Zheng},
       title={Continuous dependence for {NLS} in fractional order spaces},
        date={2011},
        ISSN={0294-1449},
     journal={Ann. Inst. H. Poincar\'{e} C Anal. Non Lin\'{e}aire},
      volume={28},
      number={1},
       pages={135\ndash 147},
         url={https://doi.org/10.1016/j.anihpc.2010.11.005},
      review={\MR{2765515}},
}

\bib{CW90}{article}{
      author={Cazenave, Thierry},
      author={Weissler, Fred~B.},
       title={The {C}auchy problem for the critical nonlinear {S}chr\"{o}dinger
  equation in {$H^s$}},
        date={1990},
        ISSN={0362-546X},
     journal={Nonlinear Anal.},
      volume={14},
      number={10},
       pages={807\ndash 836},
      review={\MR{1055532}},
}

\bib{CHL20}{article}{
      author={Cho, Yonggeun},
      author={Hong, Seokchang},
      author={Lee, Kiyeon},
       title={On the global well-posedness of focusing energy-critical
  inhomogeneous {NLS}},
        date={2020},
        ISSN={1424-3199},
     journal={J. Evol. Equ.},
      volume={20},
      number={4},
       pages={1349\ndash 1380},
         url={https://doi.org/10.1007/s00028-020-00558-1},
      review={\MR{4181951}},
}

\bib{CW91}{article}{
      author={Christ, F.~M.},
      author={Weinstein, M.~I.},
       title={Dispersion of small amplitude solutions of the generalized
  {K}orteweg-de {V}ries equation},
        date={1991},
        ISSN={0022-1236},
     journal={J. Funct. Anal.},
      volume={100},
      number={1},
       pages={87\ndash 109},
      review={\MR{1124294}},
}

\bib{Boa_Dinh}{article}{
      author={Dinh, Van~Duong},
       title={Scattering theory in weighted {$L^2$} space for a class of the
  defocusing inhomogeneous nonlinear {S}chr\"{o}dinger equation},
        date={2021},
        ISSN={1867-1152},
     journal={Adv. Pure Appl. Math.},
      volume={12},
      number={3},
       pages={38\ndash 72},
         url={https://doi.org/10.21494/iste.op.2021.0724},
      review={\MR{4311418}},
}

\bib{FH13}{article}{
      author={Fang, Daoyuan},
      author={Han, Zheng},
       title={On the well-posedness for {NLS} in {$H^s$}},
        date={2013},
        ISSN={0022-1236},
     journal={J. Funct. Anal.},
      volume={264},
      number={6},
       pages={1438\ndash 1455},
         url={https://doi.org/10.1016/j.jfa.2013.01.005},
      review={\MR{3017270}},
}

\bib{Foschi05}{article}{
      author={Foschi, Damiano},
       title={Inhomogeneous {S}trichartz estimates},
        date={2005},
        ISSN={0219-8916},
     journal={J. Hyperbolic Differ. Equ.},
      volume={2},
      number={1},
       pages={1\ndash 24},
      review={\MR{2134950}},
}

\bib{F22_chain}{article}{
      author={Fujiwara, Kazumasa},
       title={Remark on the chain rule of fractional derivative in the
  {S}obolev framework},
        date={2021},
        ISSN={1331-4343},
     journal={Math. Inequal. Appl.},
      volume={24},
      number={4},
       pages={1113\ndash 1124},
         url={https://doi.org/10.7153/mia-2021-24-77},
      review={\MR{4364567}},
}

\bib{gelfandshilov}{book}{
      author={Gel'fand, I.~M.},
      author={Shilov, G.~E.},
       title={Generalized functions. {V}ol. 1},
   publisher={AMS Chelsea Publishing, Providence, RI},
        date={2016},
        ISBN={978-1-4704-2658-3},
         url={https://doi.org/10.1090/chel/377},
        note={Properties and operations, Translated from the 1958 Russian
  original [MR0097715] by Eugene Saletan, Reprint of the 1964 English
  translation [MR0166596]},
      review={\MR{3469458}},
}

\bib{Ge08}{thesis}{
      author={Genoud, Fran\c{c}ois},
       title={Th\'eorie de bifurcation et de stabilit\'e pour une \'equation de
  schr\"oodinger avec une non-lin\'earit\'e compacte},
        type={Ph.D. Thesis},
        date={2008},
}

\bib{g_8}{article}{
      author={Genoud, Fran\c{c}ois},
      author={Stuart, Charles~A.},
       title={Schr\"{o}dinger equations with a spatially decaying nonlinearity:
  existence and stability of standing waves},
        date={2008},
        ISSN={1078-0947},
     journal={Discrete Contin. Dyn. Syst.},
      volume={21},
      number={1},
       pages={137\ndash 186},
      review={\MR{2379460}},
}

\bib{Boa}{article}{
      author={Guzm{\'a}n, Carlos~M.},
       title={On well posedness for the inhomogeneous nonlinear
  {S}chr\"{o}dinger equation},
        date={2017},
        ISSN={1468-1218},
     journal={Nonlinear Anal. Real World Appl.},
      volume={37},
       pages={249\ndash 286},
      review={\MR{3648381}},
}

\bib{hormander}{book}{
      author={H\"{o}rmander, Lars},
       title={The analysis of linear partial differential operators. {I}},
     edition={Second},
      series={Springer Study Edition},
   publisher={Springer-Verlag, Berlin},
        date={1990},
        ISBN={3-540-52343-X},
         url={https://doi.org/10.1007/978-3-642-61497-2},
        note={Distribution theory and Fourier analysis},
      review={\MR{1065136}},
}

\bib{Kato94}{incollection}{
      author={Kato, Tosio},
       title={An {$L^{q,r}$}-theory for nonlinear {S}chr\"{o}dinger equations},
        date={1994},
   booktitle={Spectral and scattering theory and applications},
      series={Adv. Stud. Pure Math.},
      volume={23},
   publisher={Math. Soc. Japan, Tokyo},
       pages={223\ndash 238},
      review={\MR{1275405}},
}

\bib{KT98}{article}{
      author={Keel, Markus},
      author={Tao, Terence},
       title={Endpoint {S}trichartz estimates},
        date={1998},
        ISSN={0002-9327},
     journal={Amer. J. Math.},
      volume={120},
      number={5},
       pages={955\ndash 980},
      review={\MR{1646048}},
}

\bib{KPV93}{article}{
      author={Kenig, Carlos~E.},
      author={Ponce, Gustavo},
      author={Vega, Luis},
       title={Well-posedness and scattering results for the generalized
  {K}orteweg-de {V}ries equation via the contraction principle},
        date={1993},
        ISSN={0010-3640},
     journal={Comm. Pure Appl. Math.},
      volume={46},
      number={4},
       pages={527\ndash 620},
      review={\MR{1211741}},
}

\bib{KLS21}{article}{
      author={Kim, Jungkwon},
      author={Lee, Yoonjung},
      author={Seo, Ihyeok},
       title={On well-posedness for the inhomogeneous nonlinear
  {S}chr\"{o}dinger equation in the critical case},
        date={2021},
        ISSN={0022-0396},
     journal={J. Differential Equations},
      volume={280},
       pages={179\ndash 202},
         url={https://doi.org/10.1016/j.jde.2021.01.023},
      review={\MR{4205742}},
}

\bib{LS21}{article}{
      author={Lee, Yoonjung},
      author={Seo, Ihyeok},
       title={The {C}auchy problem for the energy-critical inhomogeneous
  nonlinear {S}chr\"{o}dinger equation},
        date={2021},
        ISSN={0003-889X},
     journal={Arch. Math. (Basel)},
      volume={117},
      number={4},
       pages={441\ndash 453},
         url={https://doi.org/10.1007/s00013-021-01632-x},
      review={\MR{4310142}},
}
\bib{MMZ21}{article}{
      author={Miao, Changxing},
      author={Murphy, Jason},
      author={Zheng, Jiqiang},
       title={Scattering for the non-radial inhomogeneous {NLS}},
        date={2021},
        ISSN={1073-2780,1945-001X},
     journal={Math. Res. Lett.},
      volume={28},
      number={5},
       pages={1481\ndash 1504},
}

\bib{Strichartz}{article}{
      author={Strichartz, Robert~S.},
       title={Restrictions of {F}ourier transforms to quadratic surfaces and
  decay of solutions of wave equations},
        date={1977},
        ISSN={0012-7094},
     journal={Duke Math. J.},
      volume={44},
      number={3},
       pages={705\ndash 714},
      review={\MR{0512086}},
}

\bib{TVEJDE05}{article}{
      author={Tao, Terence},
      author={Visan, Monica},
       title={Stability of energy-critical nonlinear {S}chr\"{o}dinger
  equations in high dimensions},
        date={2005},
     journal={Electron. J. Differential Equations},
       pages={No. 118, 28},
      review={\MR{2174550}},
}

\bib{tzvetkov}{article}{
      author={Tzvetkov, Nickolay},
       title={Remark on the local ill-posedness for {K}d{V} equation},
        date={1999},
        ISSN={0764-4442},
     journal={C. R. Acad. Sci. Paris S\'{e}r. I Math.},
      volume={329},
      number={12},
       pages={1043\ndash 1047},
         url={https://doi.org/10.1016/S0764-4442(00)88471-2},
      review={\MR{1735881}},
}

\bib{Visan07}{article}{
      author={Visan, Monica},
       title={The defocusing energy-critical nonlinear {S}chr\"{o}dinger
  equation in higher dimensions},
        date={2007},
        ISSN={0012-7094},
     journal={Duke Math. J.},
      volume={138},
      number={2},
       pages={281\ndash 374},
      review={\MR{2318286}},
}

\end{biblist}
\end{bibdiv}

\end{document}